\newtheorem{theorem}{Theorem}
\newtheorem{lemma}{Lemma}
\newtheorem{definition}{Definition}
\def\seg{{\rm Seg}}
\def\toggle{{\rm Toggle}}
\def\vs{v^\#}
\def\fs{{f^*}}
\def\fss{{f^{**}}}
\newcommand{\comment}[1]{}
\def\be{\bm{e}}
\def\ee{{e'}}
\def\gh{\hat{g}}
\def\region{{\rm Region}}
\def\choice{{\rm Choice}}
\def\nil{{\rm null}}
\def\candidates{{\rm Candidates}}
\def\vtwiddle{{V^\#}}     
\def\toggle{{\rm Toggle}}
\def\cA{\mathcal{A}}
\def\distance{{\rm Distance}}
\def\dtv{d_{\rm TV}}
\def\Gkt{G_{(k,t)}}  
\def\Gt{G_{(t)}}
\def\sedit{\mbox{{\footnotesize Q-EDIT}}}
\def\ktsedit{{\sedit_{k,t}}}
\def\rank{\mbox{{\footnotesize  RANK}}}  
\def\cS{\mathcal{S}}
\def\bL{\bm{L}}   
\def\bM{\bm{M}}
\def\bbM{\overline{\bM}}
\def\barM{\overline{M}}
\def\bbL{\overline{\bL}}
\def\barL{\overline{L}}
\def\bA{\bm{A}}
\def\bbA{\overline{\bA}}
\def\bbA{\overline{\bA}}
\def\bC{\bm{C}}
\def\bbC{\overline{\bC}}
\def\bD{\bm{D}}
\def\bbD{\overline{\bD}}
\def\barD{\overline{D}}
\def\bX{\bm{X}}
\def\bXv{\bX^{(v)}}
\def\bXp{\bX'}
\def\bx{\bm{x}}
\def\by{\bm{y}}
\def\bp{\bm{p}}
\def\var{{\rm Var \,}}
\def\e{\mathbb{E}\,}
\def\p{\mathbb{P}}
\def\P{\mathbb{P}}
\title{Random Feedback Shift Registers, and the Limit Distribution for
Largest Cycle Lengths}
\author{Richard A. Arratia, University of Southern California \and
  E. Rodney Canfield, University of Georgia \and
  Alfred W. Hales, Center for Communications Research, La Jolla}
\begin{document}
\date{July 22, 2022}
\maketitle

\begin{abstract}
For a random binary noncoalescing feedback shift register of
width $n$, with all $2^{2^{n-1}}$ possible feedback functions $f$
equally likely, the process of long cycle lengths, scaled by dividing
by $N=2^n$, converges in distribution to the same
Poisson-Dirichlet limit as holds for random permutations in
$\mathcal{S}_N$, with all $N!$ possible permutations equally likely.
  Such behavior was conjectured by Golomb, Welch, and Goldstein in
  1959.

\end{abstract}
\tableofcontents

\clearpage

\section{Introduction}\label{sect intro}

We consider feedback shift registers, linear in the eldest bit 
(in $\mathbb{F}_2$), given as
\begin{equation}\label{feedback}
   x_{t+n} = x_t \oplus f(x_{t+1},x_{t+2},\ldots,x_{t+n-1}).
\end{equation}
Here

\begin{equation}\label{def f}
   f:  \mathbb{F}_2^{n-1} \to \mathbb{F}_2
\end{equation}
is an arbitrary $n-1$ bit Boolean function (the ``feedback'' or ``logic''), 
and we will consider all
$2^{2^{n-1}}$ possible $f$ to be equally likely.  We write
$$
   N := 2^n
$$
and note that the map
\begin{eqnarray}
\pi_f:   \mathbb{F}_2^n  & \to &\mathbb{F}_2^n  \label{def pi f}
\nonumber \\
     (x_0,x_1,\ldots,x_{n-1}) & \mapsto &  (x_1,\ldots,x_{n-1},x_n) \\
                             & = &
(x_1,\ldots,x_{n-1},x_0 \oplus f(x_1,\ldots,x_{n-1}))  \nonumber
\end{eqnarray}
is a permutation on $N$ objects.

In 1959 \cite{GolombWelch}, see also Chapter VII of \cite{Golomb},
Golomb, Welch, and Goldstein suggest 
that
the flat random permutation in $\cS_N$, with all $N!$ permutations $\pi$
equally likely, gives a good approximation to the cycle structure of $\pi_f$, in the sense that the
cycle structure of $\pi_f$ is close to the cycle structure of $\pi$, 
in various aspects of distribution, such as the average length of the
longest cycle.    See \cite{wolfram}, especially the section ``Cellular Automata and Nonlinear Shift Registers,'' which includes an anecdote that Golomb used custom hardware modules in 1956 to experiment on this conjecture, and these ran  about 3 million times faster than the general purpose computer on the same problem.

We prove that the longest cycle part of this conjecture  
is true, and more, namely  that $\pi$ and $\pi_f$
have the same limit distributions in the infinite-dimensional simplex
$\Delta$,
for the processes\footnote{A (stochastic) process is simply a collection of
  random variables, or, depending on one's point of view, the joint
  distribution of that collection.\label{process footnote}} of
long  
cycle lengths, scaled by $N$. This does not answer other
aspects of Golomb's conjecture, involving the distribution of the number
of cycles, or behavior of short cycles.

 There are two natural ways to view the large cycles of the random
 permutation $\pi_f$, which we now describe briefly.  First, there is
 the process of largest cycle lengths: write $L_i$ for the length of
 the $i^\mathrm{th}$ longest cycle of $\pi_f$, with $L_i := 0$ if the
 permutation has fewer than $i$ cycles, so that always $L_1+L_2+\cdots
 = N$, where $N=2^n$.  Write $\bbL=\bbL(N)$ for the process of scaled
 cycle lengths, $\bbL=(L_1/N,L_2/N,\ldots)$.  Second, there is the
 process of cycle lengths taken in \emph{age order}: pick a random
 $n$-tuple, take $A_1$ to be the length of the cycle of $\pi_f$
 containing that first $n$-tuple, then pick a random $n$-tuple from
 among those not on the first cycle, take $A_2$ to be the length of
 the cycle of $\pi_f$ containing that second $n$-tuple, and so on.
 Write $\bbA=\bbA(N)=(A_1/N,A_2/N,\ldots)$ for the process of scaled
 cycle lengths in \emph{age order}.  For flat random permutations
 $\pi$ in place of $\pi_f$, the limit of $\bbA$ is called the GEM
 process (after Griffiths~\cite{griffiths}, Engen~\cite{engen}, and
 McCloskey~\cite{mccloskey}); it is the distribution of
 $(1-U_1,U_1(1-U_2),U_1U_2(1-U_3),\ldots)$, where 
 $U,U_1,U_2,\ldots$ are independent and uniformly distributed in
 $(0,1)$.  The Poisson-Dirichlet process is 
 $(X_1,X_2,\ldots)$ where $X_i$ is the $i^{\rm th}$ largest of
 $1-U_1,U_1(1-U_2),U_1U_2(1-U_3),\ldots$.  This construction gives the
 simplest way to characterize the Poisson-Dirichlet process, PD.  For
 flat random permutations, the limit of $\bbL$ is
PD.\footnote{This same Poisson-Dirichlet
process  also gives the distributional limit for the process of 
scaled bit sizes of the prime factors 
of an integer
chosen uniformly  from 1 to $x$,  as $x$ goes to infinity.
Here we write PD for PD$(1)$, where,
in general, GEM$(\theta)$ and PD$(\theta)$ for $\theta>0$ are
constructed using $U^{1/\theta}$ in place of $U$, and the case
$\theta=1/2$ gives the limits for the processes of sizes of largest
components, in age order or strict size order, for random mappings,
\textit{i.e.},
functions from $[n]$ to $[n]$ with all $n^n$ possibilities equally likely.
}
See Section~\ref{sect background} for a review of these
concepts, including more discussion of age-order and the GEM limit as used
in~\eqref{thm 1 GEM}.  See also \cite{ABTbook}.  
Formally, our result is the following:
\begin{theorem}\label{theorem 1}
Consider the random permutation $\pi_f$ given by~\eqref{def pi f}, where all 
$2^{2^{n-1}}$ possible $f$ in~\eqref{def f} are equally likely.  
Then, as $n \to \infty$, $\bbL(N)$ converges in distribution to $(X_1,X_2,\ldots)$ with PD distribution.
\end{theorem}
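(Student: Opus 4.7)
The plan is to first reduce Theorem~\ref{theorem 1} to convergence of the age-ordered process $\bbA(N) = (A_1/N, A_2/N, \ldots)$ to the GEM distribution, since the map from an age-ordered sequence to its decreasing rearrangement is continuous on the infinite-dimensional simplex, and tightness is immediate from $\sum A_i/N = 1$. Thus it suffices to prove that for each fixed $k \geq 1$, the joint distribution of $(A_1/N, \ldots, A_k/N)$ converges to $(1-U_1,\, U_1(1-U_2),\, \ldots,\, U_1\cdots U_{k-1}(1-U_k))$ with the $U_i$ independent uniform on $(0,1)$.

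For the length $A_1$ of the cycle containing a uniformly random starting $n$-tuple $x^{(0)}$, I would analyze the trajectory $x^{(0)}, \pi_f(x^{(0)}), \pi_f^2(x^{(0)}), \ldots$ via the principle of deferred decisions on $f$: at step $t$ we need the value $f(y_t)$, where $y_t$ is the $(n-1)$-bit suffix of the current state, and if $y_t$ has not been queried before then $f(y_t)$ is a fresh independent fair coin, while if it has been queried its value is already fixed. The cycle length $A_1$ is the first return to $x^{(0)}$. On steps where $y_t$ is new, the event of returning to $x^{(0)}$ contributes probability roughly $1/N$, making the scaled return time $A_1/N$ approximately uniform on $[0,1]$, which matches the law of $1-U_1$. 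To propagate this to joint GEM convergence, I would iterate: starting from a uniformly random $n$-tuple not lying on any cycle already discovered, repeat the exploration conditioned on the partial information about $f$ revealed so far, and argue that $A_{j+1}/(N - A_1 - \cdots - A_j)$ is asymptotically uniform and independent of the earlier cycle lengths.

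The main obstacle is controlling the ``forced'' steps, where $y_t$ has been queried before. There are only $2^{n-1} = N/2$ possible $(n-1)$-suffixes, and for cycles whose length is of order $N$ a constant fraction of the steps along the trajectory are revisits, so the exploration does not literally behave as a uniform random walk on $\mathbb{F}_2^n$. I expect the heart of the argument to be a coupling between $\pi_f$ for random $f$ and a uniformly chosen permutation in $\cS_N$, showing that on the first $k$ age-ordered cycle lengths the total-variation distance tends to zero as $n \to \infty$. A natural tool is the observation that toggling a single value of $f$ either splits one cycle of $\pi_f$ into two cycles or merges two cycles into one; one can then attempt to walk in small moves between the feedback-shift-register model and uniform permutations while controlling the cumulative effect on the age-ordered cycle-length process. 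Making such a coupling quantitative enough to transfer the known PD convergence from $\cS_N$ to $\pi_f$ is, I believe, the crux of the proof.
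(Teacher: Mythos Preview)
Your reduction to GEM convergence via continuity of $\rank$ is fine and matches the paper's framework. But the core of your proposal has a genuine gap that you yourself identify and do not close: the deferred-decisions analysis of the trajectory from a single start point breaks down once the cycle length is of order $N$, because by that time a \emph{constant fraction} of the $(n-1)$-suffixes have already been queried, so the ``fresh coin at each step'' heuristic is simply false on the relevant time scale. Your fallback suggestion, to couple with uniform $\cS_N$ by toggling single bits of $f$ and ``walking'' between the two models, is not a proof sketch so much as a hope; in particular, a single cross-join step chosen by any natural rule is \emph{not} an involution (if you toggle at the first shared vertex you find, the toggled logic will typically have a different first shared vertex), so the pairing argument one would like does not come for free.

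The paper's route avoids both problems by never following a trajectory for order-$N$ steps. Instead of proving $A_1/N\to^d U$ directly, it proves that for each fixed $k$ the permutation $\pi_f$ \emph{relativized to $k$ uniformly random edges} is asymptotically uniform on $\cS_k$; a sampling lemma then converts this into PD convergence. The relativized permutation is analyzed by running $k$ segments of length only $t=N^{0.6}$, a scale at which the coin-tossing comparison (sequential edit versus shotgun edit, controlled by a ``good event'' $G$) is valid with high probability. Along these short segments one locates $m$ carefully constrained shared vertices $v_1^\#,\ldots,v_m^\#$ (via nested thin rectangles near the diagonal, with displacement bounds in geometric progression) so that toggling $f$ at any subset of them leaves the \emph{same} $m$ vertices selected; this is exactly what makes the $2^m$ toggles an honest equivalence class and repairs the involution fallacy. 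Within each class the values $f(v_i^\#)$ act like $m$ independent fair coins driving a random-transposition walk on $\cS_k$, and a mixing argument shows the relativized permutation is close to uniform. The short-segment regime, the controlled choice of toggle points, and the equivalence-class structure are the missing ideas in your proposal.
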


Writing $\to^d$ to denote convergence in distribution, we can succinctly summarize the conclusion of Theorem \ref{theorem 1} by writing
\begin{equation}
    \bbL(N) \to^d {\bf X} :=(X_1,X_2,\ldots).
\label{thm 1 conclusion}
\end{equation}

We note some easy consequences of Theorem~\ref{theorem 1}.
Theorem~\ref{theorem 1} is equivalent to 
\begin{equation}
    \bbA(N) \to^d  (1-U_1,U_1(1-U_2),U_1U_2(1-U_3),\ldots),
\label{thm 1 GEM}
\end{equation}
with GEM distribution,
by a soft
argument involving size-biased permutations, originally given by 
\cite{Donnelly}.
By projecting onto the first coordinate\footnote{Since $U, U_1$ and $1-U_1$ all have the same
  distribution, uniform in $(0,1)$.}, we see
\begin{equation}
    \frac{A_1}{N} \to^d  U.
\label{thm 1 A}
\end{equation}
By taking expectations, we see 
\begin{equation}
   \e \ \frac{A_1}{N} \to \frac{1}{2}.
\label{thm 1 E}
\end{equation}

Of course, the uniform distributional limit in 
\eqref{thm 1 A} makes no \emph{local} limit claim; it is plausible that
$N \, \p(A_1 = i) \to 1$ holds uniformly in $n < i < N-n$.  For any
fixed $i > 1$, the statement $N \, \p(A_1 = i) \to 1$ is
\emph{false}.  It is true that  
$N \, \p(A_1 = 1) = N \, \p(A_1 = N) =1$.  And for any
fixed $j>0$ the statement $N \, \p(A_1 = N-j) \to 1$
is \emph{false}; see \cite{punctured}.



   We work
with the de Bruijn graph $D_{n-1}$, 
with edge set  $\mathbb{F}_2^n$ and vertex set  $\mathbb{F}_2^{n-1}$;
edge $e = (y_0,y_1,\ldots,y_{n-1})$ goes \emph{from}  vertex 
$v=  (y_0,y_1,\ldots,y_{n-2})$ \emph{to}  vertex 
$v'=  (y_1,\ldots,y_{n-1})$.
The graph $D_{n-1}$ is 2-in, 2-out regular, and a 
random feedback logic $f$ corresponds to a \emph{random
  resolution} of all vertices;  the resolution at a vertex $v$ pairs
the incoming edges, $0v$ and $1v$, with the outgoing edges $v0$ and $v1$.  The
cycles of a random permutation $\pi_f$ correspond exactly to 
the edge-disjoint cycles in a random circuit
decomposition of the Eulerian graph $D_{n-1}$.

\section{A Survey of the Proof of Theorem~\ref{theorem 1}}
\label{sect survey}
\def\D{{\cal E}}
\def\eps{\epsilon}
\def\V{{\mathbb F_2}}
\def\S{\mathcal{S}}
\def\Snk{S_{n,k}}
\def\rel{{\rm rel}}

  In this section we survey the proof of Theorem~\ref{theorem 1}
while omitting many necessary technicalities. It is hoped the reader
will thus have
a better notion of what is happening, and why,
as s/he reads the later sections.
We begin with the notion of
{\it relativisation}.  Suppose, as for example
in the hypotheses of Theorem~\ref{theorem 1},
that one has for each $n=1,2,\dots$ a probability $P_n$
on the permutations 
of a set $\D_n$.  Let $\pi\in\S(\D_n)$ be one such
permutation, and let $\be=(e_1,\dots,e_k)$ be a $k$-tuple of,
for now, distinct
elements from the domain $\D_n$.  Picturing the permutation
$\pi$
as a collection of disjoint cycles, one sees that by ignoring
all
elements of $\D_n$ except for the $e_i$, these latter are permuted
among themselves.  That is, starting with $e_i$, traverse the cycle
of $\pi$ containing this element: -- 
$$
e_i,\pi(e_i),\pi^2(e_i),\dots
$$
until after one or more steps an element $e_j$ is encountered.
(It is possible for the first element so encountered
to be $e_i$, which happens when the traversed cycle contains
only a single member of the $k$-tuple $\be$.)  Since the
$e_i$ are
given in a definite order, the induced permutation among these elements
is readily identified with an element of $\S_k$, the permutations of
the set $\{1,2,\dots,k\}$.   Altogether, we have a function
$$
\rel_{n,k} : \S(D_n) \times (D_n)_k ~\rightarrow~
\S_k
$$
which we call {\it relativisation}.  Here, $(D_n)_k$ denotes the
ordered $k$-tuples drawn from $\D_n$ without replacement.
We shall prove:
Suppose that for every
fixed $k\ge 1$ the
sequence of distributions induced on $\S_k$ by the
functions $\rel_{n,k}$ and the probability distributions
$P_n$ tends to the uniform distribution.
(For brevity, we say
 ``$P_n$ has the uniform relativisation property.'')
Then the large cycle
process associated with $P_n$ tends to Poisson-Dirichlet.
The proof that the uniform relativisation property implies
the Poisson-Dirichlet property appears in Section~\ref{sect perm version},
as Lemma~\ref{permutation lemma}.

 Henceforth
we specialize to the particular sequence $P_n$ of interest:
the sets $\D_n$ are the binary $n$-tuples
$\V^n$, and $P_n$ assigns equal weight to each of the $2^{N/2}$
shift register
permutations $\pi_f$ (and no weight to other permutations), where
$N=2^n$.
Let $\Snk$ denote the Cartesian product
$$
\{ f:\V^{n-1}\rightarrow\V\} ~\times~ (\V^n)^k \,.
$$
For technical reasons we define the relativisation
function $\rel_{n,k}$ on the set
$\Snk$, see Definition~\eqref{relativisation}
in Section~\ref{sect tangle}.  Nevertheless,
pairs $(f,\e)$ in which $\be$ contains a repeated element may be
safely ignored by the reader for now, and only
the primary objective be kept in mind:
to show that as $(f,\be)$ varies over $\Snk$
the coverage of $\S_k$ under the relativisation function
$\rel_{n,k}$ is
approximately uniform.  

 Roughly speaking, this objective is accomplished by
partitioning the set $\Snk$ into blocks such that the restriction
of $\rel_{n,k}$ to each block of the partition yields an almost
uniform coverage of $\S_k$.  The description of these blocks involves
the notion of {\it toggle}.
Let
$v\in\V^{n-1}$ and $f$ be a feedback function; then the {\it toggle of
the function $f$ at the point $v$} is the function $f_v$ which
disagrees with $f$ only at the argument $v$:
$$
f_v(w) = \begin{cases}  f(w)  &  w\neq v \\
                 1\oplus f(w) &  w=v  \, .
         \end{cases}
$$
That is, we have toggled a single bit in the truth table of $f$.
Toggling a feedback function has a predictable effect on
$\rel_{n,k}(\pi_f,\be)$.  In particular, for $x\in\V,
v\in\V^{n-1}$, if $xv=\pi_f^i(e_a)$ and
$$
\{\pi_f(e_a),\pi_f^2(e_a),\dots,\pi_f^i(e_a)\}
~\cap~ \{e_1,\dots,e_k\} ~~ = ~~ \emptyset,
$$
and, (with $a<b$), $\bar{x}v=\pi_f^j(e_b)$, and
$$
\{\pi_f(e_b),\pi_f^2(e_b),\dots,\pi_f^j(e_b)\}
~\cap~ \{e_1,\dots,e_k\} ~~ = ~~ \emptyset,
$$
then (let the reader check by drawing a picture)
$$
\rel_{n,k}(\pi_{f_v},\be) ~~=~~ \rel_{n,k}(\pi_{f},\be)\circ(a,b),
$$
where $(a,b)$ denotes a transposition in $\S_k$.  The blocks in
our partition of $\Snk$ arise as follows: given $(f,\be)\in\Snk$,
we determine, in a way explained below, a subset of size $m$,
$$
V=\{v_1^{\#},\dots,v_m^{\#}\}\subseteq\V^{n-1} \, ,
$$
and define the block containing $(f,\be)$ to be the $2^m$ different
toggles $(f_U,\be)$, $U$ ranging over subsets of $V$.  Here,
$f_U$ denotes function $f$ toggled at all $v\in U$.  For the
block to be well defined, it must be the case that the choice of
$V$ will be the same for all $f_U$ as for $f$.  This necessitates the
introduction of a subset $H\subseteq\Snk$, the ``happy event,''
see equation \eqref{def H} in Section~\ref{sect happy event}.
It turns out that
the happy event is almost all of $\Snk$, $|H|/|\Snk|\to 1$, and
for $(f,\be)\in H$ the blocks are well defined. Moreover for each
such 
block we have an ordered sequence of
transpositions $(a_i, b_i)\in\S_k$ ($1\le i\le m)$
with
$$
\rel_{n,k}(\pi_{f_U},\be) = \rel_{n,k}(f,\be)
\circ (a_{i_1},b_{i_1})\circ\cdots\circ(a_{i_{\ell}},b_{i_{\ell}})
$$
where $U=\{i_1,\dots,i_{\ell}\}$.  For $m$ sufficiently large,
almost all such sequences of transpositions yield $2^m$
compositions which cover $\S_k$
almost uniformly.  (Lemma~\ref{schedule lemma} in
Section~\ref{sect schedule} proves
that for all $k,\eps$
there is an $m$ such that the distribution induced on $\S_k$
is within $\eps$ of uniform in total variation for all but an
$\eps$ fraction of possible sequences.)

  Let us say something about how, given $(f,\be)\in\Snk$,
the $m$-subset $V$ of $\V^{n-1}$ is chosen.  The pair $(f,\be)$
determines $k$ segments
\begin{equation}
\label{eq segments}
e_a, \pi_f(e_a), \dots, \pi_f^t(e_a)   ~~~ (1\le a\le k)
\end{equation}
in which the length
$t$ is taken to be
approximately $N^{3/5}$.   For this length it is almost certain
that not only are the initial edges
$e_a$ distinct, but in fact all $k\times(t+1)$ of the
edges $\pi_f^i(e_a)$ are distinct.  This feature is
included in the definition~\eqref{def H} of
event $H$.  Given that $\pi_f$ acts by shifting left and bringing in
one new bit on the right,
each sequence \eqref{eq segments} is equivalent to a binary sequence
$$
e_{a,0} e_{a,1} \cdots e_{a,n-1} \cdots e_{a,n+t-1}
$$
of length $n+t$.  To be considered for membership in $V$, an 
$(n-1)$-tuple
$v^{\#}$ must appear in two of these binary sequences; that is,
for some $a<b$ and some bit $x\in\{0,1\}$
$$
xv^{\#} = e_{a,i}\cdots e_{a,i+n-1}
~{\rm and}~~
\bar{x}v^{\#} = e_{b,j}\cdots e_{b,j+n-1}.
$$

 One may ask as $(f,\be)$ varies uniformly over $\Snk$ what is the
probability of finding such leftmost $(n-1)$-repeats $(i,j)$ in
various regions of the plane?    Remarkably, such points when
rescaled as $(i/N^{1/2},j/N^{1/2})$ constitute, in the limit with
respect to total variation distance, a familiar Poisson process.
Thanks to this limiting behavior
we can estimate not only the probability of finding $v^{\#}$'s
which satisfy the above minimal constraint for $V$-membership,
but also the probability of finding $m$ $v^{\#}$'s lying
in a much more stringently constrained geometry, which geometry
implies $(f,\be)\in H$.  Section~\ref{sect toggling}
is devoted to proving these
properties of $H$ and $V$ under the assumption that the probabilities
in question can be approximated by a Poisson process.

 We conclude our survey by saying how this last assumption
is justified.  We present in
Section~\ref{sect coins} an algorithm called
{\it sequential editing} which begins with $k$ random binary
sequences (referred to as coin-toss sequences) and edits
them in such a way that the result of the editing is a set of
$k$ sequences which {\it could have} been produced by choosing
$(f,\be)\in\Snk$ and forming the $k$ segments \eqref{eq segments}.
Even more,
the probability of obtaining a particular set of sequences is
exactly the same, whether we choose $(f,\be)$ and form
\eqref{eq segments}, or
flip $k(n+t)$ coins and perform sequential editing.  (This
is proven in Theorem~\ref{theorem honest t} of
Section~\ref{sect induced probability}).

 Moreover, there is a ``good event'' $G$,
$G\subseteq\V^{(n+t)k}$, such that when the initial coin toss
sequence $C$
belongs to $G$ the leftmost $(n-1)$-repeats in the edited
sequence appear in exactly the same locations $(i,j)$ as they
do in $C$.   Since $G$ is almost all of $\V^{(n+t)k}$
(Theorem~\ref{probG} in Section~\ref{sect coins}),
the study of the $(f,\be)$-induced
pairs is reduced to the study of leftmost $(n-1)$-repeats in
$k$ random sequences.  This new process is by no means easily
evaluated, but fortunately it is in the realm of the Chen-Stein
method as presented and extended in \cite{AGG89}.  In such a manner
the above described approximation is justified.

 Looking back at this survey, it appears that
the components
in the proof of
Theorem 1 have been described almost in the reverse order 
that they appear in the
sequel.  May we wish that in the end the determined reader will
understand the proof forwards and backwards.

\section{Comparisons with Coin Tossing Sequences}
\label{sect coins}

Throughout this section these conventions will be observed: $a_i,b_i,C_i$ denote
bits; $v_i$ denotes an $(n-1)$-long sequence of bits; and $e_i$
denotes an $n$-long sequence of bits.
A tool used in the proof of Theorem~\ref{theorem 1} is to
compare the bit sequence $b_0,b_1,\dots b_{n+t}$ generated by a randomly chosen
feedback logic $f$ with a coin toss sequence, denoted in this section
$C_0,C_1,\dots, C_{n+t}$.  A bit sequence $b_i$ generated by a feedback logic has what we
refer to as the {\it de Bruijn property}:
it satisfies a recursion of the form
$b_{t+n} = b_t + f(b_{t+1},\dots,b_{t+n-1})$.  In a sequence with the de Bruijn property the
$n$-long words $0v$ and $1v$ must be followed by different bits.   Of course, not every coin
toss sequence has the de Bruijn property.   The {\it sequential edit}, defined 
below, of a coin toss sequence $C_i$ is
obtained in a left-to-right bit-by-bit manner and adheres as closely as possible to $C_i$, changes
being made only when forced by the desire to respect the de Bruijn property.
On the other hand, 
the {\it shotgun edit}, also defined below, of a sequence $C_i$ is a naive
imitation of a sequential edit.  
In a sense and circumstances to be made precise, 
by the combination of Theorems \ref{detG} and \ref{probG},
with high probability, these two produce the same output.

\subsection{Sequential Editing}\label{sect sequential}

\noindent We begin with an $n+t$ long bit sequence
$$
C_{0}, C_1, \dots, C_{t+n-1}.
$$
The new bit sequence of the same length,
$$
b_{0}, b_1, \dots, b_{t+n-1}
$$
is produced by following two rules:
\begin{description}
\item[Rule 1:]
$$ 
b_i = C_i, ~ 0 \le i \le n-1;
$$
\item[Rule 2:] For $i\ge 0$ determine bit $b_{i+n}$ 
by first asking if the feedback logic bit $f(b_{i+1},\dots,b_{i+n-1})$ has
been previously defined; if so, set $b_{i+n}$ accordingly:
$$
b_{i+n} = b_{i} \oplus f(b_{i+1},\dots,b_{i+n-1});
$$
otherwise, define (and remember) the feedback logic bit 
in such a way that $b_{i+n}$ and $C_{i+n}$ agree.
\end{description}

\noindent   Here we give some terminology, and indexing practice.  
We say that the sequence $b$ is obtained from
the coin toss sequence $C$ by {\it sequential editing}.  Each time a $b_{i+n}$ has
freedom -- because the necessary feedback bit has not yet been set -- we set the
feedback bit so that $b_{i+n}=C_{i+n}$; but at any time the bit $b_{i+n}$ ``has no choice,'' we
assign it the forced value.  Such a time $i$ is a time of a {\it potential edit}; if
it turns out (by chance) that $b_{i+n}$ and $C_{i+n}$
agree, then no {\it actual edit} has taken place; if it is forced to take $b_{i+n}$ equal to
$\overline{C_{i+n}}$ then an actual edit has taken place, and we
label the time of this actual edit as $i$ rather than $i+n$.   The sequence
$b$ obtained by this process always has the de Bruijn property.  In
terms of the de Bruijn graph with all vertices resolved, a potential
edit occurs at time $i$  when $e_{i}$, the edge from $v_{i}$ to
$v_{i+1}$, is going in to a vertex $v=v_{i+1}$ where $f(v)$, the
resolution of that vertex, is already known,
so that the successor edge, $e_{i+1}=\pi_f(e_i)$ is determined ---
this is equivalent to determining $b_{i+n}$, the rightmost bit of $e_{i+1}$.

\subsection{Shotgun Editing}

\noindent Now we define a second, generally different, way to edit the coin toss sequence
$C_i$ to produce a sequence $a_i$.  We call this the {\it shotgun edit}.
Unlike $b_i$ obtained by sequential editing, the sequence $a_i$ obtained by shotgun
editing may not have the de Bruijn property.   

\vskip 10pt

\noindent The symbols $I$, $J$, $I_k$, $J_k$ denote intervals of
integers contained in the $(n+t)$-long interval $[0,1,2,\dots,t+n-1]$.  We use
$\ell(I)$ and $r(I)$ to denote the left- and right- endpoints of the interval
$I$.   A binary sequence
\begin{equation}\label{eq 1}
C_{0},C_1, \dots, C_{t+n-1}
\end{equation}
has an $m$-{\it long repeat} at $(I,J)$ if $\ell(I)<\ell(J)$, $|I|=m=|J|$ and the two ordered
$m$-tuples $(C_i:i\in I)$ and
$(C_j:j\in J)$ are equal.  We say that (\ref{eq 1}) 
has a {\it leftmost}\footnote{This terminology means that the repeat
  cannot be extended on the left.  The concept is 
 standard in  the literature, for example   \cite{aldous} and  \cite[p. 19]{AGG90}. }  
$m$-long repeat 
at $(I,J)$ if,
in addition, either $\ell(I)=0$ or 
$$
C_{\ell(I)-1} \neq C_{\ell(J)-1}.
$$
This given, the shotgun edit of coin toss (\ref{eq 1}) is readily defined: make a list
$(I_1,J_1),(I_2,J_2),\dots$ of all the leftmost $n$-long repeats found in
(\ref{eq 1}).  Let
$$
a_i =
\begin{cases}
\overline{C_i} & {\rm if}~ i=r(J_k) ~{\rm some}~ k\\
          C_i  & {\rm otherwise.}
\end{cases}
$$

\subsection{Zero and First Generation Words}  

\noindent Let
$$
C_{0},C_1, \dots, C_{t+n-1}
$$
be a coin toss sequence whose leftmost $n$-tuple repeats occur at $(I_1,J_1),(I_2,J_2),\dots$.
The {\it zero-generation} words of length $h$ 
are simply words of the form:
$$
(C_i,C_{i+1},\dots,C_{i+h-1}).
$$
A {\it first-generation} word is a zero-generation word with exactly one bit
complemented, with the index of the complemented bit required to be $r(J_k)$ for some~$k$:
$$
(C_i,C_{i+1},\dots,\overline{C_{i+j}}, \dots, C_{i+h-1}),  ~~~  i+j = r(J_k).
$$


\subsection{The Good Event $\Gt$}\label{sect good}

We always consider $n$ to be understood, but sometimes we will 
not  
want to
emphasize the role of $t$, hence writing $G \equiv \Gt$. 
Henceforth we shall always assume that $t$ is at most $N=2^n$, since we are interested in cycle lengths
for permutations on a set of size $N$.  Let    $$
C_{0},C_1, \dots, C_{t+n-1}
$$
be a 
length $n+t$
coin toss sequence whose leftmost $n$-repeats occur at $(I_1,J_1),(I_2,J_2),\dots$.
Then the {\it good event} $\Gt$ is defined to be the conjunction of
these six conditions:

\begin{description}

\item[(a)] neither the initial $n$-long word of the coin toss sequence, nor any 
of its $1$-offs\footnote{I.e., words at Hamming distance 1,  hence with our two-letter alphabet, words formed by complementing a single bit.} is repeated (probability of failure $O(tn/N)$);

\item[(b)] all intersections of the form $I_k\cap J_{k^{\prime}}$
are empty (probability of failure $O(t^3n/N^2+tn^3/N)$);

\item[(c)] the sets $I_1,I_2,\dots$ are pairwise disjoint; likewise
$J_1,J_2,\dots$ (probability of failure $O(t^2n^2/N^2+t^3n/N^2+tn^3/N)$);

\item[(d)] no first-generation word of length $n-1$
equals a zero-generation word of length $n-1$, or another first-generation word
of length $n-1$ (probability of failure $O(t^4n^2/N^3+t^3n^3/N^2+tn^3/N)$);

\item[(e)] for every leftmost $(n-1)$-repeat $(I,J)$ we have
$$
r(J_k)\notin I \cup J \cup \{\ell(I)-1,\ell(J)-1\}
$$
for all $k$ (probability of failure $O(t^3n/N^2+t^2n^2/N^2+tn^3/N)$);

\item[(f)] there is no $(2n-1)$-repeat (probability of failure $O(t^2/N^2)$).

\end{description}

\noindent  The indicated probabilities of failure will be proven below in Theorem~\ref{probG}.  First,
though, we will prove a theorem that explains why $G$ is called the ``good event.''

\begin{theorem}\label{theorem detG}\label{detG}
If the coin toss sequence
$$
C_{0},C_1, \dots, C_{t+n-1}
$$
belongs to the good event $G$, then

\begin{description}
\item[Conclusion 1.] The sequentially edited sequence $b_i$ and the shotgun edited sequence $a_i$ agree; and

\item[Conclusion 2.] The sequentially edited sequence $b_i$ and the coin toss sequence have their
leftmost $(n-1)$ repeats at exactly the same positions.
\end{description}
\end{theorem}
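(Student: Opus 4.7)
The plan is to prove Conclusions~1 and~2 simultaneously by induction on $i\geq 0$, where at the end of step $i$ of the sequential edit we have produced bits $b_0,\dots,b_{i+n}$. The invariants to maintain are: (I) $b_k=a_k$ for all $k\leq i+n$; and (II) the leftmost $(n-1)$-repeats in the prefix $b_0,\dots,b_{i+n}$ occur at the same position-pairs as those in $C_0,\dots,C_{i+n}$. The base case handles the initial $n$ bits via Rule~1, and condition~(a) prevents any shotgun flip or $(n-1)$-repeat involving the starting configuration.

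For the inductive step, the key is to translate the sequential edit condition into a condition on $C$. An actual sequential edit at time $i$ occurs iff the vertex $v_{i+1}=(b_{i+1},\dots,b_{i+n-1})$ coincides with some earlier $v_{j+1}$ (so that $f(v_{i+1})$ was set at the earliest such $j$) and the forced value $b_{i+n}=b_i\oplus b_j\oplus C_{j+n}$ differs from $C_{i+n}$. By Invariant~(II), the earliest coincidence corresponds to an $(n-1)$-repeat in $C$ at left endpoints $j+1$ and $i+1$; condition~(e) then places $j$ and $i$ outside every shotgun-flip position, yielding $b_i=C_i$ and $b_j=C_j$. The edit condition collapses to $C_i\oplus C_j\neq C_{i+n}\oplus C_{j+n}$. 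Condition~(d), which forbids first-generation $(n-1)$-words from coinciding with zero- or other first-generation $(n-1)$-words, is used to rule out a spurious ``vertex seen as repeated'' by sequential when a previous shotgun flip has injected a first-generation bit into the window $v_{i+1}$; once this is ruled out, the relevant anchor~$j$ is the leftmost one and the edit condition becomes $C_{j+n}=C_{i+n}$. That is precisely the condition for the $(n-1)$-repeat to extend on the right to a leftmost $n$-repeat $(I_k,J_k)$ with $r(J_k)=i+n$, which is exactly when shotgun flips $C_{i+n}$; hence $b_{i+n}=a_{i+n}$ and Invariant~(I) propagates.

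Preservation of Invariant~(II) is the joint content of conditions~(c),~(d),~(e). Condition~(c) makes the shotgun-flip positions $r(J_k)$ pairwise at least $n$ apart, so every $(n-1)$-window of $b$ is either zero-generation or first-generation. Condition~(d) then forbids any first-generation $(n-1)$-word from coinciding with a zero-generation or another first-generation $(n-1)$-word, ruling out the creation of any new $(n-1)$-repeat in $b$. Condition~(e) places each flip outside every leftmost $(n-1)$-repeat interval of $C$ and away from its immediate left-neighbors, so no existing leftmost $(n-1)$-repeat of $C$ is destroyed or has its leftmost label demoted in $b$. Conditions~(b) and~(f) control the remaining global degeneracies, namely overlaps between source intervals and flip positions, and anomalously long repeats that could cascade.

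The main obstacle is synchronizing the left-to-right, first-encounter viewpoint of sequential editing with the static leftmost-repeat list of $C$: one must verify that accumulated flips on the left neither push the sequential walk into a repeated vertex that the static picture of $C$ does not see, nor mask a vertex-repeat that $C$ does see. The six conditions of $G$ are calibrated exactly to block every such mechanism — condition~(d) is the load-bearing ingredient, since it is this condition that handles the awkward configurations in which a $(k)$-tuple agreement of $C$ with $n<k<2n-1$ creates overlapping vertex visits whose first-encounter status depends on where the shotgun flips have already landed. Once (d) is combined with the boundary control from~(a), the spacing and disjointness of flips from~(c) and~(e), the source/flip separation from~(b), and the length bound from~(f), the induction closes and both Conclusion~1 and Conclusion~2 follow.
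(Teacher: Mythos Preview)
Your inductive scheme with invariants (I) and (II) is a different organization from the paper's proof, which argues Conclusion~1 by contradiction at the first position of disagreement and then proves Conclusion~2 separately. Your approach is viable in principle, but there is a real gap at the heart of the inductive step.

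The problem is your claim that ``the relevant anchor~$j$ is the leftmost one.'' You need the $(n-1)$-repeat at $(j+1,i+1)$ to be \emph{leftmost}---equivalently $b_j\neq b_i$---in order to invoke condition~(e) (which only speaks of leftmost $(n-1)$-repeats) and then to collapse the edit condition $C_i\oplus C_j\neq C_{i+n}\oplus C_{j+n}$ to $C_{j+n}=C_{i+n}$. You attribute this to condition~(d) and to Invariant~(II), but neither does the job: condition~(d) only tells you the repeat is genuine in $C$ (not manufactured by a flip), and Invariant~(II) cannot be applied until you already know the $b$-repeat is leftmost. If $b_j=b_i$, then $b$ contains an $n$-tuple repeat at $(j,i)$, and nothing you have written excludes this; in that scenario your derivation of ``sequential edits $\Leftrightarrow$ shotgun flips'' breaks down.

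What is missing is exactly the mechanism the paper makes explicit: because the $b$-sequence has the de~Bruijn property, any $n$-repeat in $b$ backs up step by step (via $\pi_f^{-1}$) until the initial $n$-word $(b_0,\dots,b_{n-1})=(C_0,\dots,C_{n-1})$ is repeated by some later $n$-window of $b=a$, which by~(c) is a zero- or first-generation word---contradicting condition~(a). This backup argument is what forces $b_j\neq b_i$ and hence leftmost-ness; the paper uses it in its Case~(1) and again (by reference) in Case~(2). Your sketch never invokes it, and the phrase ``boundary control from~(a)'' in your final paragraph is too vague to stand in for it. Once you insert this argument, your induction can be made to close; without it, the step from ``$v_{j+1}=v_{i+1}$ in $b$'' to ``leftmost $(n-1)$-repeat in $C$ at $(j+1,i+1)$'' is unjustified.
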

These conclusions, along with Theorem \ref{theorem bCrepeats}  in the next subsection, will provide substantial control of the prevalence of $(n-1)$-tuple repeats
\begin{proof}[Proof of Conclusion 1]
Assume, to the contrary, that the $a$ and $b$ sequences differ;
let $i$ be the first position of disagreement:
$$
a_j = b_j, j<i; ~~~  a_i\ne b_i.
$$
There are two possibilities: (1) $a_i\neq b_i$ and $b_i=C_i$; or (2)
$a_i\neq b_i$ and $a_i=C_i$.

\vskip 5pt

\noindent{\bf Case (1).}  Since $a_i\neq C_i$ we have $i=r(J_k)$ for some $k$,
and there is a leftmost $n$-repeat in the $C$ sequence at $(I_k,J_k)$.  But
$a_j=C_j$ for $j\in I_k$ (condition(b)); and $a_j=C_j$ for $j\in J_k\setminus\{i\}$ (condition (c)).
Hence $b_j=a_j=C_j$ for $j \in I_k\cup J_k\setminus\{i\}$.  But $b_i\neq a_i \neq C_i$, so in fact
the $b$-sequence itself has an $n$-repeat at $(I_k,J_k)$.  But the $b$-sequence has the
de Bruijn property, and so the $(I_k,J_k)$ repeat can be backed up $d=\ell(I_k)>0$ steps
to reveal
$$
(b_0,\dots,b_{n-1}) = (b_{i-d-n+1},\dots,b_{i-d}).    ~~~~~~ (d=\ell(I_k)>0)
$$
Since $i-d<i$,
$$
(a_0,\dots,a_{n-1}) = (a_{i-d-n+1},\dots,a_{i-d})
$$
so in fact
\begin{equation}\label{eq Ca}
(C_0,\dots,C_{n-1}) = (a_{i-d-n+1},\dots,a_{i-d}).
\end{equation}
The word on the right side of the last equality is either a zero-generation or a
first-generation word; either case contradicts condition (a).

\vskip 10pt

\noindent{\bf Case (2).}  Because $i$ is a sequential edit point,
($b_i\neq C_i$), it must be that
the $(n-1)$-long word $(b_{i-n+1},\dots, b_{i-1})$ is appearing for a 
second    or later     time, say
$$
(b_{\ell-n+1},\dots, b_{\ell-1}) ~~=~~ (b_{i-n+1},\dots, b_{i-1}), ~~ \ell<i.
$$ 
We must have $b_{\ell}=C_{\ell}$, since no sequential
editing took place at time $\ell$.  (The relevant bit of the feedback logic
had not yet been determined.)   We know that $b_i\neq b_{\ell}$,
else the $b$-sequence contains an $n$-repeat which, as was explained
in Case (1), backs up to yield the contradictory (\ref{eq Ca}).  So,
$C_i\neq b_i \neq b_{\ell} = C_{\ell}$; that is, $C_i=C_{\ell}$ and
$$
(b_{\ell-n+1},\dots, b_{\ell-1}, C_{\ell})
~=~~
(b_{i-n+1},\dots, b_{i-1}, C_i), ~~ \ell<i.
$$ 
Because $i$ is the first point at which the $b$ and $a$ sequences disagree,
\begin{equation}\label{eq 2}
(a_{\ell-n+1},\dots, a_{\ell-1}, C_{\ell})
~=~~
(a_{i-n+1},\dots, a_{i-1}, C_i), ~~ \ell<i.
\end{equation}
Suppose (for the sake of a contradiction) that none of the $a$ bits appearing on
either side of this last Equation~(\ref{eq 2}) was edited by the
shotgun process.  Then we have
\begin{equation}\label{eq 3}
(C_{\ell-n+1},\dots, C_{\ell-1}, C_{\ell})
~=~~
(C_{i-n+1},\dots, C_{i-1}, C_i), ~~ \ell<i.
\end{equation}
We have thus discovered an $n$-long repeat in the coin toss sequence, but it might
not be a {\it leftmost} $n$-long repeat.  So, we look left
to determine the least $m\ge 1$ such that either $\ell-n+1-m<0$ (\textit{i.e.}, you've
gone ``off the board'') or the run of equalities is broken:
$$
C_{\ell-n+1-m} \neq C_{i-n+1-m}.
$$
One of these two will happen
for $m<n$ or else the $C$-sequence is found to contain a
$2n$-repeat, contradicting assumption (f).  But then we have found a leftmost
$n$-repeat in the $C$-sequence beginning at $\ell-n+1-m+1$ and $i-n+1-m+1$;
shotgun editing would consequently modify the $C$-bit at position
$i-n+1-m+1+n-1=i-m+1$.  Since
$$
i-n+1 < i-m+1 \le i,
$$ we have found that one of the $C$-bits on the right side of
Equation~(\ref{eq 3}), namely the one whose index is $i-m+1$, is
changed by shotgun editing, contrary to our earlier supposition that
none of the $a$ bits appearing on either side of the equality~(\ref{eq 2})
was edited by the shotgun process.

\vskip 5pt

\noindent  By condition (c), every $n$-long word in the $a$ sequence either
is a zero-generation word (matches exactly the corresponding $C$-bits) or  
is a first-generation word (matches the corresponding $C$-bits with exactly one change).
Thus, at least one of the $n$-long words appearing in (\ref{eq 2}) is a
first-generation word, and this contradicts condition~(d).
\end{proof}

\begin{proof}[Proof of Conclusion 2.]
We will make use of the $a$ and $b$ sequences
being equal.  Suppose we have a leftmost $(n-1)$ repeat in the coin toss sequence,
\begin{equation}
C_{i+j} = C_{\ell+j}, 0\le j<(n-1);  ~~~{\rm and}~ i=0 ~~{\rm or~}
C_{i-1} \neq C_{\ell-1}.
\end{equation}
By condition (e), none of these $2n$ bits (or $2n-2$ in case $i=0$) can be edited
by the shotgun edit.  Hence, we have a leftmost $(n-1)$-repeat at the same
place in the $a$ sequence, whence also the $b$ sequence.

On the other hand, suppose we have a leftmost $(n-1)$-repeat in the $a$ sequence,
\begin{equation}\label{aRepeat}
(a_i, a_{i+1}, \dots, a_{i+n-2})  =
(a_\ell, a_{\ell+1}, \dots, a_{\ell+n-2}),
\end{equation}
and
$$
 i=0, ~~~{\rm or}~~~ a_{i-1} \neq a_{\ell-1}.
$$
If
$$
(a_{i},\dots,a_{i+n-2}) \neq
(C_{i},\dots,C_{i+n-2})
$$
then $(a_{i},\dots,a_{i+n-2})$ is a first-generation word of length $n-1$ which
equals the first- or zero-generation word $(a_{\ell},\dots,a_{\ell+n-2})$, which is
forbidden by condition~(d).  So, 
\begin{equation}\label{aCiRepeat}
(a_{i},\dots,a_{i+n-2}) =
(C_{i},\dots,C_{i+n-2}).
\end{equation}
Similarly,
\begin{equation}\label{aCjRepeat}
(a_{\ell},\dots,a_{\ell+n-2}) =
(C_{\ell},\dots,C_{\ell+n-2}).
\end{equation}
Altogether by~\eqref{aRepeat},\eqref{aCiRepeat},\eqref{aCjRepeat} we have
\begin{equation}\label{CCRepeat}
(C_{i},\dots,C_{i+n-2}) =
(C_{\ell},\dots,C_{\ell+n-2}).
\end{equation}
If $i=0$, then the last is a leftmost $(n-1)$-repeat in the $C$ sequence, as asserted.
So, to conclude, suppose for the sake of a contradiction
that $i>0$ and that $C_{i-1}= C_{\ell-1}$.
Then we have an $n$-long repeat
$$
(C_{i-1},\dots,C_{i+n-2}) =
(C_{\ell-1},\dots,C_{\ell+n-2}).
$$
Sliding left for $m$ steps, we will encounter a {\it leftmost} $n$-repeat in the
coin toss sequence
$$
(C_{i-1-m},\dots,C_{i+n-2-m}) =
(C_{\ell-1-m},\dots,C_{\ell+n-2-m}),
$$
with $0\le m<n-1$ by condition (f).   But in such a case $a_{\ell+n-2-m}
\neq C_{\ell+n-2-m}$ by the definition of shotgun editing.  However,
for $0\le m<n-1$
$$
\ell \le \ell+n-2-m \le \ell+n-2,
$$
and by~\eqref{aCjRepeat} $a_{\ell+n-2-m}
= C_{\ell+n-2-m}$  The supposition that $i>0$ and that $C_{i-1}= C_{\ell-1}$
has been contradicted, and so~\eqref{CCRepeat} is, indeed, a leftmost $(n-1)$-repeat
as needed.
\end{proof}

\subsection{Probability}\label{sect probability}

In this section we bound the probability of failure of any one of the conditions (a)--(f)
appearing in Theorem~\ref{theorem detG}.   Let $S$ be a set of relations, each of the form
$C_i=C_j$ or $C_i \neq C_j$ with $i<j$.  We assume always that $S$ has at most one relation for
a given $(i,j)$; that is, we don't allow both $C_i=C_j$ and $C_i \neq C_j$.
What is the probability
that a coin toss sequence $C$ will satisfy such a set of relations?
The desired probability is $2^{-|S|}$ {\it provided} the graph associated with
$S$ is cycle free.  The graph we have in mind here is $(V,E)$ where $V$ is the
set $0,1,2,\dots$ and $E$ is the set of pairs $\{i,j\}$ such that at least one (and by
convention exactly one) of the relations $C_i=C_j$ or $C_i \neq C_j$ belongs to $S$.

In particular, if the graph of $S$ consists of the
$n$ pairs $(i,j),(i+1,j+1),\dots,(i+n-1,j+n-1)$ the probability is $2^{-n}=1/N$.
This is quite clear if $I=\{i,\dots,i+n-1\}$ and $J=\{j,\dots,j+n-1\}$ are disjoint,
since then the underlying graph has no vertex of degree 2.  It is also true if
$I$ and $J$ overlap, (of course $I\neq J$): every vertex of degree
$2$ in the graph (\textit{i.e.}, every element of $I\cap J$) has one larger neighbor and one
smaller neighbor.  But a cycle would require at least one vertex with two smaller
neighbors.

We will
have frequent occasion below, in the proof of Theorem~\ref{probG},
to consider sets $S$ whose graphs are the union of 
two such $n$-sets of pairs $(i_1,j_1)$, 
$(i_1+1,j_1+1), \dots, (i_1+n-1,j_1+n-1)$ and
$(i_2,j_2)$, $(i_2+1,j_2+1), \dots$  $(i_2+n-1,j_2+n-1)$.  We begin with a lemma
which shows that in many situations which arise in these proofs
the probability in question is $1/N^2$.

\begin{lemma}\label{lemma lemma}  Let ${\cal G}$ be the graph whose edges
consist of two sets of pairs
$$(i_1,j_1),(i_1+1,j_1+1),\dots,(i_1+m_1-1,j_1+m_1-1)$$
and
$$(i_2,j_2), (i_2+1,j_2+1),\dots,(i_2+m_2-1,j_2+m_2-1).$$  
Then ${\cal G}$ is cycle free if any one of the following three conditions
holds, where we assume $i_1<j_1$ and $i_2<j_2$:
\begin{description}
 
\item{\rm \bf (i)}  $I_1\cap I_2 =\emptyset$

\item{\rm \bf (ii)} $J_1\cap J_2 =\emptyset$

\item{\rm \bf(iii)} $(I_1\cup I_2) \cap (J_1\cup J_2) = \emptyset$, and
$j_2-i_2 \neq j_1-i_1$.
\end{description}
\end{lemma}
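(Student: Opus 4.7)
The plan is to treat the three sufficient conditions separately, in each case assuming for contradiction that $\mathcal{G}$ contains a simple cycle. Throughout, set $d_s := j_s - i_s > 0$ for $s = 1,2$, and regard each edge $\{i_s+k,\, j_s+k\}$ as ``pointing'' from its smaller endpoint $i_s+k \in I_s$ to its larger endpoint $j_s+k \in J_s$. Then at any vertex $v$ the edges of $\mathcal{G}$ incident to $v$ consist of at most one rightward edge per band $s$ (namely $\{v, v+d_s\}$, present iff $v \in I_s$) and at most one leftward edge per band (namely $\{v-d_s, v\}$, present iff $v \in J_s$). Observe also that under any of (i)--(iii) the graph has no multi-edges, since two band-edges coincide only when $d_1=d_2$ \emph{and} $I_1 \cap I_2 \neq \emptyset$ (equivalently $J_1 \cap J_2 \neq \emptyset$).

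Cases (i) and (ii) then follow from a one-line extremal argument. Under (i), $I_1 \cap I_2 = \emptyset$ forces every vertex to have at most \emph{one} rightward edge in $\mathcal{G}$ in total. However, the minimum vertex of any simple cycle is strictly smaller than both of its cycle-neighbors, so it would need two rightward cycle-edges --- contradiction. Case (ii) is symmetric, with ``max / leftward'' playing the role of ``min / rightward''.

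Case (iii) is the delicate one. The hypothesis $(I_1 \cup I_2) \cap (J_1 \cup J_2) = \emptyset$ makes every vertex either a pure source (in $I_1 \cup I_2$) or a pure sink (in $J_1 \cup J_2$), and every edge runs source-to-sink, so $\mathcal{G}$ is bipartite. A supposed cycle therefore has even length $2k$ and alternates $v_0, v_1, \ldots, v_{2k-1}$ with sources at even indices and sinks at odd indices. At each source $v_{2i}$ both cycle-edges are rightward, and, because each band supplies at most one rightward edge at $v_{2i}$, these two edges must lie in \emph{different} bands; the symmetric statement holds at each sink. Write $\alpha_i \in \{1,2\}$ for the band of the edge $v_{2i}\to v_{2i+1}$ and $\beta_i$ for the band of $v_{2i+1} \to v_{2i+2}$ (indices mod $k$). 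The alternation at sources gives $\alpha_i \neq \beta_{i-1}$, and the alternation at sinks gives $\alpha_i \neq \beta_i$; since everything lives in $\{1,2\}$, both relations together force $\beta_{i-1} = \beta_i$ for every $i$, so a single $\alpha$ is used on every rightward step and a single $\beta$ on every leftward step. Each source-sink-source bounce then shifts the walk by exactly $d_\alpha - d_\beta$, so closing the cycle after $k$ bounces requires $k(d_\alpha - d_\beta) = 0$, which under the hypothesis $d_1 \neq d_2$ forces $k = 0$ --- contradicting the existence of the cycle.

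The main obstacle, as I see it, is the band-alternation bookkeeping in case (iii): one has to lift the local observation ``the two cycle-edges at any vertex lie in different bands'' to the global statement that one single band is used on \emph{every} rightward step around the cycle, after which the closure condition $k(d_\alpha-d_\beta)=0$ does the killing. Cases (i) and (ii) should be essentially immediate once the rightward/leftward extremal viewpoint is installed.
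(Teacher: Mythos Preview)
Your proof is correct and follows essentially the same approach as the paper's: cases (i) and (ii) via the minimum/maximum vertex of a putative cycle, and case (iii) via bipartiteness and band-alternation forcing equal numbers of $d_1$- and $d_2$-steps with net displacement $\ell(d_1-d_2)\neq 0$. The only difference is granularity: the paper dismisses the alternation in (iii) with a single ``clearly the cycle must alternate edges between $(I_1,J_1)$ and $(I_2,J_2)$ types,'' whereas you actually prove it via the $\alpha_i\neq\beta_{i-1}$, $\alpha_i\neq\beta_i$ bookkeeping, and you also make the no-multi-edge check explicit.
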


\begin{proof}
If $I_1\cap I_2=\emptyset$ then no vertex has two neighbors larger than it. 
If $J_1\cap J_2=\emptyset$ then no vertex has two neighbors smaller than it. 
In case (iii),
all edges out of $I_1\cup I_2$ go to $J_1\cup J_2$, and vice-versa.
A cycle, if there is one, lies within the bipartite graph whose parts
are $I_1\cup I_2$ and  $J_1\cup J_2$, and clearly the cycle must alternate
edges between $(I_1,J_1)$ and $(I_2,J_2)$ types.  If the cycle (of necessity
even in length) uses $\ell$ edges of the first sort and $\ell$ of the 
second, then it has traveled $\ell\times (j_1-i_1)$ in one direction
and $\ell\times (j_2-i_2)$ in the other.  The last part of condition (iii)
makes it impossible for the cycle to have returned to its starting point.
\end{proof}

\begin{theorem}\label{probG}\label{theorem bCrepeats}
Let $G$ be the good event.  Then,
$$
\P(G) ~~ \ge ~~ 1 ~-~ O\left(t^4n^2/N^3+t^3n^3/N^2+tn^3/N\right).
$$
\end{theorem}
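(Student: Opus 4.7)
The plan is to prove each of the six failure bounds (a)--(f) separately by a first-moment (union-bound) argument, then to union-bound the six contributions into the overall bound on $\P(G^c)$. The principal tool is Lemma~\ref{lemma lemma}: whenever two $n$-intervals $(I,J)$ and $(I',J')$ satisfy any one of its three cycle-free hypotheses, the joint probability of both repeats occurring is exactly $1/N^2$. The strategy, for each of (b)--(e), is to decompose the failure event into a \emph{generic} part, where Lemma~\ref{lemma lemma} applies directly, and a \emph{degenerate} part, where the associated graph may contain a cycle but the number of free positional parameters is correspondingly reduced.

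Conditions (a) and (f) are simple warm-ups. For (a), the initial $n$-word and its $n$ single-bit variants give $O(n)$ target words, each with $O(t)$ candidate match positions, each event of probability $1/N$; this totals $O(tn/N)$. For (f), there are $O(t^2)$ ordered position pairs for a $(2n-1)$-repeat, each with probability $2^{-(2n-1)} = O(1/N^2)$, giving $O(t^2/N^2)$.

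For (b), I enumerate pairs of leftmost $n$-repeats $(I_k,J_k)$ and $(I_{k'},J_{k'})$ with $I_k\cap J_{k'}\ne\emptyset$: four position parameters with one overlap constraint of width at most $n$, giving $O(t^3 n)$ generic configurations, each of probability $1/N^2$, for a total of $O(t^3 n/N^2)$; when the displacements coincide and the blocks align to force a cycle, one positional degree of freedom is lost along with bit constraints, producing the residual $O(tn^3/N)$ term. Conditions (c) and (e) follow the same generic/degenerate split, with (e) combining a leftmost $n$-repeat (cost $1/N$) with a leftmost $(n-1)$-repeat (cost $1/N$). Condition (d) is the most intricate: a first-generation $(n-1)$-word match needs the underlying $n$-repeat that produced the edit (cost $1/N$) plus an independent $(n-1)$-repeat between the edited versions (cost $\sim 1/N$), yielding the leading $O(t^4 n^2/N^3)$ from about $t^4 n^2$ generic configurations.

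The main obstacle is the careful degenerate-case analysis when no hypothesis of Lemma~\ref{lemma lemma} applies. There the two $n$-repeat patterns fuse into a constrained object---an extended repeat, a nested repeat, or a periodic segment---whose probability is not $1/N^2$ but whose count is smaller by a compensating factor, so that each such geometry contributes only a subdominant term fitting under $O(tn^3/N)$ or $O(t^2 n^2/N^2)$. The bookkeeping consists in checking, for each pair of overlap patterns arising in (b)--(e), that either the graph is cycle-free (Lemma~\ref{lemma lemma}) or the overlap-induced loss of a positional parameter absorbs the weaker per-configuration bound. Summing the six contributions then gives the stated $O(t^4 n^2/N^3 + t^3 n/N^2 + t^2 n^2/N^2 + tn^3/N)$.
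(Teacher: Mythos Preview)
Your plan matches the paper's approach: union over (a)--(f), Lemma~\ref{lemma lemma} as the workhorse, and a case split between configurations where the lemma applies cleanly and ``degenerate'' ones with reduced positional freedom. Your counts for (a), (f), and the leading $O(t^4n^2/N^3)$ contribution in (d) are right.

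There is, however, a real gap in your dichotomy ``either the graph is cycle-free (Lemma applies) or the overlap-induced loss of a positional parameter absorbs the weaker per-configuration bound.'' In (c), (d), and (e) the paper relies on a third mechanism you have not identified: the \emph{leftmost} (respectively, the $\ne$) constraint rules certain configurations out entirely. Concretely, in (c), take two leftmost $n$-repeats with $I_1\cap I_2\ne\emptyset$, $J_1\cap J_2\ne\emptyset$, $(I_1\cup I_2)\cap(J_1\cup J_2)=\emptyset$, and \emph{equal} displacements $j_1-i_1=j_2-i_2=d$. The equality graph here is cycle-free (it is a single repeat of displacement $d$ over the interval $I_1\cup I_2$), so by your scheme it falls into the ``generic'' branch---but the probability is only $2^{-|I_1\cup I_2|}$, which can be as large as $1/(2N)$, not $1/N^2$. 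The positional count is $O(t^2 n)$, yielding a contribution $O(t^2 n/N)$; this is not absorbed by any term in the theorem and in fact diverges for $t=N^{0.6}$. The paper instead observes that if, say, $I_1$ lies left of $I_2$, then $\ell(I_2)-1\in I_1$, so the $(I_1,J_1)$ repeat forces $C_{\ell(I_2)-1}=C_{\ell(J_2)-1}$, contradicting the leftmost hypothesis on $(I_2,J_2)$. Equal displacements are therefore impossible, Lemma~\ref{lemma lemma}(iii) applies to the surviving $d_1\ne d_2$ cases, and one obtains the correct $O(t^2n^2/N^2)$. The same device---using the leftmost or $\ne$ constraint to force unequal displacements so that part (iii) of the lemma fires---is exactly what drives the paper's treatment of (d) and (e). You need to add this third branch to your bookkeeping; without it the stated bound does not follow.
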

\begin{proof}
We shall show that the probability
that a random coin toss sequence of length $t+n$ fails any one of the conditions (a) through (f)
in the definition of $G$ is $O(t^4n^2/N^3+t^3n^3/N^2+tn^3/N)$.  (More explicitly,
each will be shown to fail with the probability indicated in the definition of $G$.)
We invoke the
above Lemma during the proof by citing
 Lemma  (i), Lemma  (ii), 
and Lemma  (iii).

\vskip 10pt

Condition (a): [neither the initial $n$-long word of the coin toss sequence, nor any
of its $1$-offs, is repeated.]  Consider first an exact repetition.
There are $t-1$ places where the repeated sequence
can start, and by earlier remarks the probability that the
second sequence repeats the first is $1/N$.  The same argument applies to the $1$-offs of
the initial pattern, and we conclude that the probability for condition (a) to fail is
less    than $t(n+1)/N$.

\vskip 10pt

Condition (b): [all intersections $I_k\cap J_{k^{\prime}}$ are empty.]  For $k=k^{\prime}$ we bound
the probability of failure by $tn/N$ using the same technique as in case (a).
Suppose that $I_1\cap J_2 \neq\emptyset$.  By the $k=k^{\prime}$ case of the proof
we may assume $J_1$ disjoint from $I_1$ and to its right; and $I_2$ disjoint from
$J_2$ and to its left.  If $I_1\cap I_2=\emptyset$ then Lemma (i) yields the upper
bound $O(t^3n/N^2)$.
If $J_1\cap J_2=\emptyset$ then Lemma (ii) yields $O(t^3n/N^2)$.  In the remaining
case $I_1$ meets $I_2$,
$J_1$ meets $J_2$, and $I_1$ meets $J_2$.   Thus
the union $I_1\cup I_2\cup J_1\cup J_2$ is an interval, and a bound of $O(tn^3/N)$ results.

\vskip 10pt

Condition (c): [the sets $I_1,I_2,\dots$ are pairwise disjoint;
likewise $J_1,J_2,\dots$.]
We will prove the assertion regarding $I_1,I_2,\dots$; the other
assertion is proven in an entirely similar manner.
Suppose $I_1\cap I_2\neq\emptyset$.
We may assume $J_1\cap J_2\neq\emptyset$; otherwise
Lemma (ii) implies an upper bound of $O(t^3n/N^2)$.  So now, both
intersections $I_1\cap I_2$ and $J_1\cap J_2$ are nonempty.  If
any one of the four intersections
$I_a\cap J_b$ is nonempty, then again
the union $I_1\cup I_2\cup J_1 \cup J_2$ is an interval, and we have
the upper bound $O(tn^3/N)$.    So assume
$(I_1\cup I_2) \cap (J_1 \cup J_2) = \emptyset$.
Assume, for the sake of a contradiction, that $r(J_2)-r(I_2)=d=r(J_1)-r(I_1)$.
Then we have $I_1\neq I_2$ and
$J_1\neq J_2$.   Without loss, let us say $I_1$ is left of $I_2$ and
$J_1$ is left of $J_2$.  We have
$C_{\ell(I_2)-1} \neq C_{\ell(J_2)-1}$
because $(I_2,J_2)$ is assumed to be a {\it leftmost} $n$-repeat.  
Since $I_1$ is left of $I_2$, $\ell(I_2)-1\in I_1$; but then
$C_{\ell(I_2)-1} = C_{\ell(J_2)-1}$, the contradiction.  So,
$r(J_2)-r(I_2)=d=r(J_1)-r(I_1)$ is untenable and now Lemma (iii)
implies an upper bound of $O(t^2n^2/N^2)$.

\vskip 10pt

Condition~(d): [no First-generation word of length $n-1$
equals a Zero-generation word of length $n-1$, or another First-generation word
of length $n-1$].  Suppose the first assertion is violated.
Then we have, for some $i$, some $d>0$ and some $j\in \{i,i+1,\dots,i+n-2\}$,
\begin{equation}\label{SetUp}
C_{\ell}=C_{\ell+d} ~~~{\rm for}~ \ell \in \{i,i+1,\dots,i+n-2\}\setminus\{j\},
~{\rm and}~
C_{j}\neq C_{j+d},
\end{equation}
with $r(J_k) \in\{j,j+d\}$ and
with $(I_k,J_k)$ a leftmost $n$-repeat.
Let $I = \{i,i+1,...,i+n-2\}$ and let $J = \{i+d,i+d+1,...,i+d+n-2\}$.
If $I$ and $I_k$ are disjoint then using Lemma (i) the probability is bounded by $O(t^3 \, n/N^2)$.  So assume they intersect, so their union is an interval.  
Similarly we can assume, using Lemma (ii), that $J$ and $J_k$ also intersect so their union is another interval. 
 If these two intervals intersect, forming another interval, we have a probability bound of $O(t \, n^3/N)$.   
 Otherwise $( I \cup I_k) \cap (J \cup J_k)$ is empty.  If $r(J_k) - r(I_k) = d$ then $C_j = C_{j+d}$, contradicting \eqref{SetUp}. So Lemma (iii) implies $O(t^2 \, n^2/N^2)$ for the probability.
This gives an overall bound of $O(t \, n^3/N +t^2 \, n^2/N^2 +t^3 \,n/N^2)$.

Next, suppose the second assertion of (d) is violated.
Then we have, for some $i$, some $d>0$ and some
$j_1,j_2\in \{i,i+1,\dots,i+n-2\}$,
\begin{equation}\label{SetUp2}
(C_i, C_{i+1},\dots, \overline{C_{j_1}}, \dots, C_{i+n-2})
~ = ~~
(C_{i+d}, C_{i+d+1},\dots, \overline{C_{j_2+d}}, \dots, C_{i+d+n-2}),
\end{equation}
with $j_1=r(J_1)$, $j_2+d=r(J_2)$, and $(I_1,J_1), (I_2,J_2)$ leftmost
$n$-repeats.   As reasoned before we have $I\cap J$, $I_1\cap J_1$,
and $I_2\cap J_2$ all empty with probability at least $1-O(tn^2/N)$.  
It follows, from the sheer geometry of the situation, that
$I\cap I_1=\emptyset$.  We may assume that  $I_1\cap I_2=\emptyset$,
since (as proven in (c) above) the probability
of failure is $O(t^3n/N^2+t^2n^2/N^2+tn^3/N)$.
%
We may assume that    $I_1\cap I=\emptyset$,        
for otherwise the union of $I_1$ and $I$ and $J_1$ is a connected interval, 
and then by reasoning as above a bound of   $O( t^3n^3 /N^2)$ results.
We now have all three intersections $I\cap I_1$, $I\cap I_2$, and $I_1\cap I_2$
being empty; and by an obvious embellishment of Lemma (i) 
the probability of the remaining case is $O( t^4n^2 /N^3)$.

\vskip 10pt

Condition (e): [for every leftmost $(n-1)$-repeat $(I,J)$ we have
$$
r(J_k)\notin I \cup J \cup \{\ell(I)-1,\ell(J)-1\}
$$
for all $k$.]  Say $I=\{i,i+1,\dots,i+n-2\}$
and $J=\{i+d,i+d+1,\dots,i+d+n-2\}$. 
The probability that $I_k\cap J_k$ is not empty is $O(tn/N)$, so
assume $I_k\cap J_k=\emptyset$.
If $r(J_k) \in I\cup\{i-1\}$, then, since $I_k$ lies entirely to 
the left of $J_k$, $I_k\cap I=\emptyset$.  By Lemma (i)
the probability of this is $O(t^3n/N^2)$.  

The probability that $I\cap J$ is not empty is $O(tn/N)$, so
assume both $I_k\cap J_k$ and $I\cap J$ are empty.  The probability
that $I_k\cap I$ is empty is, by Lemma (i), $O(t^3n/N^2)$, so assume
$I_k\cap I\neq\emptyset$.
If $I\cap J_k$ or $I_k\cap J$ is nonempty then  
$I_k\cup I \cup J_k \cup J$ is an interval, and the probability of this
is $O(tn^3/N)$.  So, assume
$(I\cup I_k) \cap (I_k\cup J) = \emptyset$.
If $r(J_k)-r(I_k)=r(J)-r(I)$, then
\begin{align*}
C_{\ell(I)-1} ~~ =    ~~ C_{\ell(J)-1}  &  ~~~{\rm by}~  \ell(I)-1 \in I_k \\
C_{\ell(I)-1} ~~ \neq ~~ C_{\ell(J)-1}  &  ~~~{\rm by}~  (I,J) ~{\rm being~a~{\it leftmost}~} (n-1)-{\rm repeat},
\end{align*}
an impossibility.  So, $r(J_k)-r(I_k)\neq r(J)-r(I)$ and Lemma (iii) gives the
bound $O(t^2n^2/N^2)$ for this final scenario.

\vskip 10pt

Condition (f):  [there is no $(2n-1)$-repeat.]  Easily, the failure probability
is at most $2t^2/N^2$.
\end{proof}

\subsection{Coin Tossing Versus Paths in the de Bruijn Graph}
\label{sect induced probability}

\begin{theorem}\label{theorem honest t}
Let $b_0,\dots,b_{t+n-1}$ be a bit string.  Then, the probability that this string arose by
the sequential editing of an $n+t$ long coin toss sequence is the same as the probability
that it arose by choosing a logic $f:V^{n-1}\rightarrow V$ and starting position
$(b_0,\dots,b_{n-1})$ each uniformly at random.
\end{theorem}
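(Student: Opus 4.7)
The plan is to compute, for every fixed output string $b = (b_0, \ldots, b_{t+n-1})$, the probability under each of the two processes and check that they agree pointwise on $\{0,1\}^{n+t}$.

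First I would identify the common support. A string $b$ can arise from the FSR process iff it is \emph{de Bruijn consistent}, meaning: whenever two time steps $0 \le i < j \le t-1$ satisfy $(b_{i+1}, \ldots, b_{i+n-1}) = (b_{j+1}, \ldots, b_{j+n-1})$, we have $b_i \oplus b_{i+n} = b_j \oplus b_{j+n}$. This is also exactly the set of possible outputs of sequential editing, since that algorithm (as noted in Section~\ref{sect sequential}) always produces a string with the de Bruijn property, and conversely any de Bruijn string is the edit of any coin toss sequence that matches it at the forced positions. So both distributions have the same support, and for $b$ outside it both probabilities are zero.

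Next, for a de Bruijn consistent $b$, let $D(b)$ denote the number of distinct $(n-1)$-tuples appearing in the list $(b_{k}, \ldots, b_{k+n-2})$ for $k = 1, 2, \ldots, t$; these are the $(n-1)$-tuples whose feedback value is queried by either process. Under the FSR process the starting $n$-tuple is forced to be $(b_0, \ldots, b_{n-1})$, and $f$ is forced at exactly those $D(b)$ vertices and unconstrained at the remaining $2^{n-1} - D(b)$ vertices of $\mathbb{F}_2^{n-1}$, giving a count of $2^{2^{n-1} - D(b)}$ valid $(f, \text{start})$ pairs out of $2^{2^{n-1}} \cdot 2^n$ equally likely ones, i.e., probability $2^{-n - D(b)}$. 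Under sequential editing the first $n$ bits $C_0, \ldots, C_{n-1}$ must equal $b_0, \ldots, b_{n-1}$ (Rule 1); then for $i = 0, 1, \ldots, t-1$, Rule 2 uses $C_{i+n}$ to \emph{set} $f$ whenever $(b_{i+1}, \ldots, b_{i+n-1})$ is a first occurrence (forcing $C_{i+n} = b_{i+n}$), and ignores $C_{i+n}$ entirely when the tuple has already been seen. The first case happens at exactly $D(b)$ indices and the second at the remaining $t - D(b)$ indices, so the count of coin toss sequences producing $b$ is $2^{t - D(b)}$, and the probability is $2^{t - D(b)}/2^{n+t} = 2^{-n - D(b)}$. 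The two probabilities agree.

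The only nontrivial thing to verify carefully is that, at a \emph{repeat} step $i$ under sequential editing, the forced value $b_i \oplus f(b_{i+1}, \ldots, b_{i+n-1})$ really does equal the target bit $b_{i+n}$ (so that the algorithm genuinely lands on $b$, regardless of the $2^{t-D(b)}$ choices of the unused $C$ bits). This is precisely the de Bruijn consistency condition: if the tuple was first set at time $j < i$, then $f(b_{i+1}, \ldots, b_{i+n-1}) = b_j \oplus b_{j+n}$, and consistency says $b_i \oplus (b_j \oplus b_{j+n}) = b_{i+n}$. I expect this to be the main (though mild) obstacle, since it is the point where the counting arguments on the two sides are synchronized; everything else is bookkeeping about which of the $n+t$ coin flips are free versus forced.
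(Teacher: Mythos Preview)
Your proof is correct and follows essentially the same approach as the paper's: both arguments count, for a fixed de Bruijn-consistent string $b$, the number of free versus forced choices under each process, arriving at the common probability $2^{-n-D(b)}$ where $D(b)$ is the number of distinct $(n-1)$-tuples $(b_k,\ldots,b_{k+n-2})$ for $k=1,\ldots,t$. Your write-up is arguably a bit cleaner than the paper's (which has some indexing ambiguity in its formula for $r$), and you make explicit the verification that at repeat steps the forced value matches $b_{i+n}$, which the paper leaves implicit.
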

\begin{proof}
Without loss of generality we assume the given string 
has the de Bruijn property.  (Else, the two probabilities
are both zero.)   First, let's compute the probability that $b$ arose by sequential 
editing of a $(t+n)$-long coin toss.  The probability of the coin toss yielding
$b_0,\dots,b_{n-1}$ is $(1/2)^n$.  Consider
$b_i$, with $i\ge n$.   If $(b_{i-n+1},\dots,b_{i-1})$ is equal to $(b_{j-n+1},\dots,b_{j-1})$
for some $j$ in the range $n\le j<i$, then sequential editing
says to let $b_i$ be what it ought to be: $b_{i-n}\oplus f(b_{i-n+1},\dots,b_{i-1})$.  In which
case, it does not matter what value $C_i$ has.   But if $i\ge n$ and $(b_{i-n+1},\dots,b_{i-1})$
has not been seen before (among $(n-1)$-long words ending at a position greater then or equal
to $n$), then $C_i$ must be equal to $b_i$.   (And, we remember henceforward
the value of
$f(b_{i-n+1},\dots,b_{i-1})$ is $b_i\oplus b_{i-n}$.)   Altogether, then, the probability that a
length $t+n$ coin toss will yield a given sequence
$b_0,b_1,\dots,b_{t+n-1}$ by sequential editing is
$2^{-r}$ where
$$ r =  n-1+ \#{\rm distinct~} (n-1){\rm -long~subwords~ending~at~position~} n-1 {\rm ~or~later}.
$$

Now let's compute the probability that $b$ arose by choosing a starting position and
logic at random.
Classify each position $i$, $0\le i\le t+n-1$, as Type I or Type II.  
The position is Type I if $i\ge n$ and the preceding $(n-1)$ long word $(b_{i-n+1},\dots,b_{i-1})$
is appearing for the first time in the $b$-sequence. 
The position is Type II otherwise:
either $i<n$, or the preceding $(n-1)$ long word $(b_{i-n+1},\dots,b_{i-1})$ is appearing for the 
second or later time.  It should be clear that the probability in question is
$$
\left(\frac{1}{2}\right)^{n+\#{\rm Type~I}}.
$$
The two probabilities just calculated agree.\footnote{There are several
  interesting results in Maurer \cite{Maurer} for cycles in de Bruijn
  graphs; one must be careful to think about the factor $2^{\pm r}$ in 
going back and forth between these estimates, and estimates for a
random $\pi_f$, corresponding to \emph{randomly resolved} de Bruijn graphs.}
\end{proof}

\subsection{Notation for Paths Starting at $k$ Random $n$-tuples}

We now
fix $k \ge 1$ and use the notation $e_1,\ldots,e_k$ to name $k$
random
$n$-tuples.   Collectively, these $k$ edges of $D_{n-1}$ are denoted
\begin{equation}\label{def be}
   \be = (e_1,e_2,\ldots,e_k)  \in (\mathbb{F}_2^n)^k.
\end{equation} 
Picking a random feedback $f$, and $k$ random $n$-tuples, independent
of $f$, is equivalent to picking one element, uniformly at random from
the space
\begin{equation}\label{def S_{n,k}}
    S_{n,k} := \{ (f,\be)\!:  \ f:  \mathbb{F}_2^{n-1} \to \mathbb{F}_2,
 \be \in (\mathbb{F}_2^n)^k  \}, \text{ with } |S_{n,k}|=2^{2^{n-1}+kn}.
\end{equation}

The choice of $(f,\be)$ from $S_{n,k}$ determines $k$ infinite
periodic sequences of edges:  for $a=1$ to $k$,
\begin{equation}\label{def k segments}
   \seg(f,e_a) := (e_{a,0} e_{a,1} e_{a,2}\cdots) \emph{ where }
   e_{a,0}=e_a, \text{ and for } i \ge 0, e_{a,i+1}= \pi_f(e_{a,i}).
\end{equation}

For the sake of comparison with coin tossing, we often look at such
paths only up to time $t$ (this is what motivated our terminology
\emph{segment}): 
\begin{equation}\label{def k seg t}
 \text{ for } a=1 \text{ to } k, \ \   \seg(f,e_a,t) = (e_{a,0} e_{a,1} \cdots e_{a,t}).
\end{equation}

\subsection{$(k,t)$-sequential Editing}

Now we will define a modification of the sequential editing process
that was
discussed earlier in Section~\ref{sect sequential}.   The reader should
bear in mind our ultimate goal. We wish to study what happens when
a feedback logic $f$ is chosen at random; $k$ different starting
$n$-tuples $e_1,\dots,e_k$ are chosen at random; and $k$ walks of
length $t$ are generated, the first starting from $e_1$ and using
the logic $f$ to continue for $t$ steps; the second starting from
$e_2$, etc.  As in Section~\ref{sect sequential}, we
wish to generate these walks using $k(n+t)$ coin tosses, and we
would like to have  an analog to Theorem~\ref{theorem honest t}
saying that our procedure for passing from the coin toss to the
$k$ walks perfectly simulates the process of choosing a logic and
starting points at random.   The reader can almost certainly
envision the natural way to achieve this, but we will write out
the details.

The first $n+t$ coins are used exactly as in Section
\ref{sect sequential}: Rule 1 is applied to the first $n$
coin tosses to yield starting point $e_1$, and then
Rule 2 is applied $t$ times to get the overlapping
$n$-tuples
$e_1=e_{1,0}, e_{1,1}, \ldots, e_{1,t}$ that form the
first walk. 
Equivalently,
this segment is spelled out by the $(n+t)$ de Bruijn bits $b_0 \ldots b_{t+n-1}$,
and along the way, some feedback logic bits have been defined.

Then, for the next $n$ coin tosses,  $C_i$ for $i=t+n$ to $i=t+2n-1$
inclusive, sequential editing is \emph{suspended};
again Rule 1 is applied, to give 
$$
  e_2 := (b_{t+n},\ldots,b_{t+2n-1}) :=
  (C_{t+n},\ldots,C_{t+2n-1}),
$$
with no new feedback logic bits learned.
Then, Rule 2 is applied for the next $t$ input bits, $C_i$ for
$i=t+2n$ to $i=2t+2n-1$ to create the second walk of
length $t$,  $\seg(f,e_2,t)$  --- remembering of course
those feedback logic bits that were learned during the creation of 
$\seg(f,e_1,t)$, and (most likely) learning some  new feedback logic
bits in the process.  (It might be the case that
  $e_2=e_1$, or that $e_2$ appears in the first walk, in
which case, we don't learn any new feedback logic bits.)
If $k>2$, we continue in a
similar fashion, first suspending editing for time $n$, during
which time we learn no
new  feedback logic bits  and we form
$e_a := (b_{(a-1)(t+n)},\ldots,b_{(a-1)t+an-1}) :=
(C_{(a-1)(t+n)},\ldots,C_{(a-1)t+an-1})$,  then returning to Rule 2
for the next $t$ bits, to fill out $\seg(f,e_a,t)$.

For $k,t \ge 1$ we define
\begin{eqnarray}\label{def ktsedit}
   \ktsedit: \{0,1\}^{k(n+t)} & \to & (\mathbb{F}_2^{t+n})^k \\ 
          (C_0,C_1,\ldots,C_{k(n+t)-1}) & \mapsto &
   (\seg(f,e_1,t),\ldots,\seg(f,e_k,t) )
\end{eqnarray}
as given by the above procedure. 

It may, or should,  seem intuitively obvious that $\ktsedit$,  applied to
an input uniformly chosen from $ \{0,1\}^{k(n+t)}$, induces the same
distribution on the $k$ segments of length $t$ in 
\eqref{def k segments}, as does a uniform pick from $S_{n,k}$ and
iteration of $\pi_f$ from each of $e_1,\ldots,e_k$.  We claim that
the argument given in the
proof of Theorem~\ref{theorem honest t} can be adapted to show this.

\subsection{The Good Event $\Gkt$ for $(k,t)$-sequential Editing}
There are two \emph{different} ways to produce $k$ walks each of
length $t$ out of a sequence of $k(n+t)$ coin tosses.
The first, with $t'=(k-1)n+kt$ playing the role of $t$,
is simple sequential edit, to determine a
starting
$n$-tuple $e$, and one path $e_0,e_1,\ldots,e_{t'}$ corresponding to
$t'=(k-1)n+kt$
iterates of $\pi_f$ starting from $e$.  The good event, regarding this
first procedure, is really $G \equiv G_{((k-1)n+kt)}$.
We can then \emph{cut} the path of length $t'$ to produce $k$ paths of
length $t$;  see~\eqref{segment vertex} to see the natural notation
associated with such cutting.
The second procedure is is to apply $\ktsedit$, defined in the
previous section, to produce a
$k$-tuple of starting edges, $\be$, and $k$ segments of length $t$, as
in~\eqref{def k seg t}.  The good event, regarding this second
procedure,
to be called $\Gkt$, is designed so that the two procedures agree.  We
simply
take all of the demands of the good event for simple editing on
$k(n+t)$ coins, and throw in additional requirements to ensure the
\emph{suspensions} of editing involved in the definition of
$\ktsedit$.   Informally, these additional requirements are that every
$(n-1)$ tuple which appears at some time $j$ involved in \emph{suspension}
occurs at no other time $i$ in the coin toss sequence.  Formally,  given
$n,k,t$,
the \emph{bad} event $B$ is given by
\begin{equation}\label{def bad}
 B =   \bigcup_{ i \in [0, k(n+t)-n+1]}  \ \ \bigcup_{j \in
     \cup_{a=1}^{k-1} [a(n+t)-n+2,a(n+t)]}  M_{ij}
\end{equation}
where the event $M_{ij} = \emptyset$ if $i=j$, and otherwise
$$
   M_{ij} = \{d_{\rm HAMMING}(C_iC_{i+1}\cdots
   C_{i+n-2},C_jC_{j+1}\cdots C_{j+n-2}) \le 1 \},
$$
and the good event is then
\begin{equation}\label{def Gkt}
\Gkt = G_{((k-1)n+kt)} \setminus B.
\end{equation}
Since a word of length $n-1$ has $n$ neighbors at Hamming distance 1
or less, $\p(M_{ij}) = n/2^{n-1}$ for $i \ne j$, so
that $\p(B) \le (n+t)k^2n^2 \times 2/N$, for the sake of extending
Theorem~\ref{probG}.

We now consider the following to have been proved; it is a single
theorem, to give the extensions of Theorems \ref{detG} and \ref{probG}
and
\ref{theorem honest t},  
appropriate to $k,t$ sequential editing.  Note that  in the final conclusion of Theorem \ref{kt theorem} we treat $k$ as fixed  while $n,t \to \infty$, so that $t$ and $kt$ are of the same order, and we take the assumption  $t/\sqrt{N} \to \infty$ 
so that the three terms in the bound from Theorem \ref{probG} are covered by a single term.

\begin{theorem}\label{kt theorem}
\begin{enumerate}[(i)]
\item     The procedure $\ktsedit$, applied to a coin toss sequence $$
(C_0,C_1,\ldots,C_{k(n+t)-1}) $$
chosen uniformly at
random from $\{0,1\}^{k(n+t)}$, yields $k$ segments of length $t$, 
$(\seg(f,e_1,t),\ldots,\seg(f,e_k,t) )$ with \emph{exactly} the same
distribution as obtained by a random feedback logic $f$ and $k$
starting $n$-tuples,  $\be = (e_1,\ldots,e_k)$.  
\item  The good event
$\Gkt \subset \{0,1\}^{k(n+t)}$, defined by~\eqref{def Gkt} --- which
ultimately involves conditions {\rm (}a{\rm )} through {\rm (}f{\rm )} from 
Section~\ref{sect good},
applied with $t'=(k-1)n+kt$ in the role of $t$, is such that for every
outcome in $\Gkt$, the bit sequence $b_0b_1 \cdots b_{k(n+t)-1}$
{\rm (}and the equivalent sequence of overlapping $n$-tuples, $e_0e_1\cdots
e_{(k-1)n+kt}${\rm )}
formed by single sequential edit agrees with the shotgun edit of the
$k(n+t)$ coins, and leftmost $(n-1)$-tuple repeats have the same
locations in $b_0b_1 \cdots b_{k(n+t)-1}$ and in
$(C_0,C_1,\ldots,C_{k(n+t)-1})$.
\item Also, on the good event $\Gkt$, the $k$ segments of length $t$,
produced by $\ktsedit$ and 
notated as in~\eqref{def k seg t} match exactly with $e_0\cdots
e_t, e_{t+n}\cdots e_{2t+n}, \ldots, e_{(k-1)(t+n)} \cdots
e_{(k-1)n+kt}$,
produced by cutting the output of the single sequential edit of $k(n+t)$ coins.
\item Finally, if $t/\sqrt{N} \to \infty$ with $k$ fixed, then $\p(\Gkt) \ge 1 - O(n^3 \, t^3/N^2)$.
\end{enumerate}
\end{theorem}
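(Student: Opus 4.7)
The proof strategy is to assemble the four assertions from the previously-established results---Theorem~\ref{theorem honest t} for distribution, Theorem~\ref{detG} for the ``goodness'' of the good event, and Theorem~\ref{probG} for the tail bound---applied with $t' := (k-1)n + kt$ playing the role of $t$, and then to carve out the bad event $B$ that enforces the correct behavior of the suspension windows.

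For the distributional assertion, I would iterate the counting argument from the proof of Theorem~\ref{theorem honest t} block by block. Partition the $k(n+t)$ coins into $k$ blocks of size $n+t$; in block $a$, the first $n$ coins are copied verbatim to form $e_a$ (Rule~1 / suspension), and the remaining $t$ coins drive Rule~2 using the feedback bits already determined by blocks $1,\ldots,a-1$. Fix an output $(\seg(f,e_a,t))_{a=1}^{k}$ corresponding to starting edges $e_1,\ldots,e_k$ and a partial logic $f|_W$, where $W \subseteq \mathbb{F}_2^{n-1}$ is the set of $(n-1)$-tuples that serve as a source of a forced transition somewhere in one of the $k$ walks. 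The same free-choice bookkeeping as in Theorem~\ref{theorem honest t} then shows that the probability of this output under $\ktsedit$ equals $2^{-kn - |W|}$; on the other hand, under the uniform measure on $S_{n,k}$, the $k$ starting $n$-tuples contribute $2^{-kn}$ and the constrained logic bits contribute $2^{-|W|}$, matching exactly.

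For the conclusions about agreement on $\Gkt$, apply Theorem~\ref{detG} to the single sequential edit of the full coin sequence of length $k(n+t)$ with $t'$ in place of $t$: on $G_{(t')}$ the sequential edit $b$ agrees with the shotgun edit, and leftmost $(n-1)$-repeats have the same locations in $b$ and $C$. To identify this with the output of $\ktsedit$, cut at the block boundaries, I must argue that on $\Gkt$ the single sequential edit performs no actual edit at any position in the $k-1$ suspension windows $[a(n+t), a(n+t)+n-1]$. This is exactly what $B^c$ buys: the Hamming-$\le 1$ condition in~\eqref{def bad} rules out any coincidence between the $C$-word at a suspension position and any other zero- or first-generation word in $C$, the slack of $1$ accounting for the one bit that shotgun editing might flip; condition~(d) of the good event, together with Theorem~\ref{detG}, then transfers this non-coincidence from $C$ to the edited sequence $b$, so the $(n-1)$-tuple preceding each suspension position is fresh, Rule~2 degenerates to a pure copy, and the output matches Rule~1 of $\ktsedit$. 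This step---lifting non-coincidence from the coin sequence to the edited sequence---is the main conceptual obstacle; once it is in hand, the equality of the two descriptions is a bijection by inspection.

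For the probability bound, $\p(\Gkt) \ge \p(G_{(t')}) - \p(B)$. Theorem~\ref{probG} applied with $t' = \Theta_k(t)$ gives
\[
 \p(G_{(t')}) \ge 1 - O\!\left(t^4n^2/N^3 + t^3n/N^2 + t^2n^2/N^2 + tn^3/N\right),
\]
and the remark immediately following~\eqref{def Gkt} gives $\p(B) = O(tn^2/N)$. Under $t/\sqrt N \to \infty$ with $n = \log_2 N$ in the target regime ($\sqrt N \ll t \ll N/n$ from Section~\ref{sect coins}), a routine term-by-term comparison---using $n \ll t$ to beat $t^2 n^2/N^2$, $t \gg n\sqrt N$ to beat $tn^3/N$, $tn \ll N$ to beat $t^4n^2/N^3$, and $t^2 \gg nN$ to beat $\p(B)$---absorbs all five bounds into $O(nt^3/N^2)$, completing the proof.
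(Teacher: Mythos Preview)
Your proposal is correct and follows the same approach the paper takes: the paper gives no separate proof of this theorem, explicitly stating ``we now consider the following to have been proved'' as an assembly of Theorems~\ref{detG}, \ref{probG}, and~\ref{theorem honest t} applied with $t'=(k-1)n+kt$, together with the carving-out of the bad event $B$ from~\eqref{def bad}; you have simply fleshed out that sketch. One small caveat on your final paragraph: the term-by-term absorption of $tn^3/N$ and $\p(B)=O(tn^2/N)$ into $O(nt^3/N^2)$ requires $t\gg n\sqrt N$ (as you note), which is strictly stronger than the stated hypothesis $t/\sqrt N\to\infty$; this is harmless in the paper's intended regime $t=N^\alpha$ with $\alpha\in(1/2,2/3)$, but the theorem as literally stated is slightly loose on this point, and the paper does not address it either.
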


We summarize:  there is an exact operation, sequential editing of
$n+t$ coin tosses, which achieves the exact distribution of
$\seg(f,e,t)$,
as induced by a uniform choice of $(f,e)$ from its $2^{2^{n-1}} 2^n$
possible values, followed by starting at $e$ and taking $t$ iterates
of the
permutation
$\pi_f$.  There is a good event $G \equiv G_{t}$, with $\p(G) \to 1$ provided that
$t^3n^3/N^2 \to 0$, for which the sequential edit agrees with the
shotgun edit, and $v_i=v_j$ iff the coins have a leftmost
$(n-1)$-tuple repeat at $(i,j)$.  This sequential edit can be used
with $k(n+t)$ in place of $n+t$, to create one long segment; there is
the corresponding good event $G_{t^{\prime}}$, $t^{\prime}=(k-1)n+kt$.
There is a second, distinct operation, $\ktsedit$,
for editing $k(n+t)$ coin tosses,
to yield the exact distribution of $k$ segments of length $t$ under a
single logic $f$ and $k$ starting $n$-tuples, $\be =
(e_1,\ldots,e_k)$; that is, the distribution of
$(\seg(f,e_1,t),\ldots,\seg(f,e_k,t))$ as induced 
by a uniform choice of $(f,\e)$ from its $2^{2^{n-1}} 2^{kn}$
possible values.
And there is a corresponding good event $\Gkt \subset  G_{t^{\prime}}$,
with
$$ 
\p(G_{t^{\prime}} \setminus \Gkt) \le 2k^2n^2(n+t)/N,
$$
formed by adding the constraint that $i$ or $j \in \cup_{0 \le a <k}
[a(n+t)-n+2,a(n+t)]$ implies that there is not an $(n-1)$ tuple repeat
at $(i,j)$.   On the event $\Gkt$, the $k$-sequential edit agrees
exactly with the \emph{cutting} of $\seg(f,e_1,k(n+t)-n)$.

\subsection{A Cutting Example}\label{sect cutting}

We now illustrate some of the concepts just introduced, with an example
and with Figures 1-4.
Take $n=10,t=90,k=3$.  So, to generate $k=3$ segments of length
$t=90$, we start with $k(n+t)=300$ coin tosses, used to generate one
segment of length $k(n+t)-n=290$. When we have in mind a single
segment of length $t$,  we will use a single subscript to label the
edges, so that with $e=e_0$, the segment is a list of $t+1$ edges
\begin{equation}\label{segment 1}
\seg(f,e,t)= e_0 e_1 \cdots e_t.
\end{equation}

 The coin tosses, indexed from
$i=0$ to $i=299$, are labeled $C_i$, the de Bruijn bits formed
by
sequential edit are labeled $b_i$, and the bits formed by shotgun
edit are labeled $a_i$.  On the good event $G$, we will have
$a_i=b_i$ for all $i$.  The vertex $v_i$ is the $(n-1)$- tuple of bits
starting with $b_i$, the edge $e_i$  is the $n$-tuple of bits starting 
with $b_i$, and edge $e_i$ at time $i$ goes from vertex
$v_{i}$ to $v_{i+1}$:
$$  
   v_{i} = b_{i}b_{i+1}\cdots b_{i+n-2}, \  e_i = b_{i}b_{i+1}\cdots b_{i+n-1}, \
e_i= (v_{i},v_{i+1})  .
$$
We also view the segment in~\eqref{segment 1}
as a list of $t+2$ vertices, or as
a list of $t+n$ bits, and abuse notation by writing equality, so that 
\begin{equation}\label{segment 1 vertex}
\seg(f,e,t)=  v_0 v_1 \cdots v_t v_{t+1}.
\end{equation}
\begin{equation}\label{segment 1 bit}
\seg(f,e,t)= b_0 b_1\cdots b_{n-1}b_n \cdots b_tb_{t+1}\cdots b_{t+n-1}.
\end{equation}

Since we are particularly interested in leftmost $(n-1)$-tuple repeats, we shall
suppose that we are in the good event $G$, and the leftmost
$(n-1)$-tuple repeats in the coin-toss sequence 
are at (56,153), (120,260), and (135,175).  Thanks to $G$ occurring,
we know that all 291 edges $e_0$ to $e_{290}$ are distinct, and the
only vertex repetitions are  $v_{56}=v_{153}, v_{120}=v_{260}$, and
$v_{135}=v_{175}$. 
One way of indicating where these vertex repeats occur is to draw some
auxiliary lines pointing to the locations, as in Figure~\vref{fig 1}.
Figure~\vref{fig 2} gives a two-dimensional (``spatial'') view of the same situation.

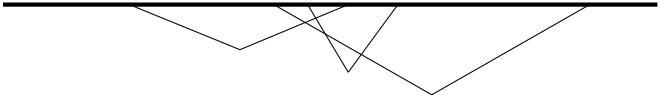
\begin{figure}
   \begin{tikzpicture}[scale=.03]
    \draw[ultra thick] (0,0) -- (290,0);  
    \draw (56,0) --(105,-20);\draw (153,0) --(105,-20);
    \draw (120,0) --(190,-40);\draw (260,0) --(190,-40);
    \draw (135,0) --(153,-30);\draw (175,0) --(153,-30);
 
   \end{tikzpicture}
   \caption{An example, one segment of length 290, where there are
     three leftmost $(n-1)$-tuple repeats, at (56,153), (120,260), and
     (135,175)}\label{fig 1}
\end{figure}
 

\begin{figure}
   \begin{tikzpicture}[scale=.03]
   \draw[ultra thick] (0,0) -- (290,0);  
   \draw (56,0) --(105,-20);\draw (153,0) --(105,-20);
   \draw (120,0) --(190,-40);\draw (260,0) --(190,-40);
   \draw (135,0) --(153,-30);\draw (175,0) --(153,-30);
   \draw[ultra thick] (0,0) -- (0,290);
   
   \draw (0,56) --(-20,105);\draw (0,153) --(-20,105);
   \draw (0,120) --(-40,190);\draw (0,260) --(-40,190);
   \draw (0,135) --(-30,153);\draw (0,175) --(-30,153);

   \draw [dotted] (0,0) -- (290,290);    
   \draw [dotted] (0,290) -- (290,290);    

   \draw [fill] (56,153) circle [radius=3];
   \draw [fill] (120,260) circle [radius=3];
   \draw [fill] (135,175) circle [radius=3];

   \end{tikzpicture}
   \caption{The same example: one segment of length 290, where there
     are three leftmost $(n-1)$-tuple repeats, at locations (56,153),
     (120,260), and (135,175).  Now, the locations are plotted in
     standard Cartesian coordinates.}\label{fig 2}
\end{figure}
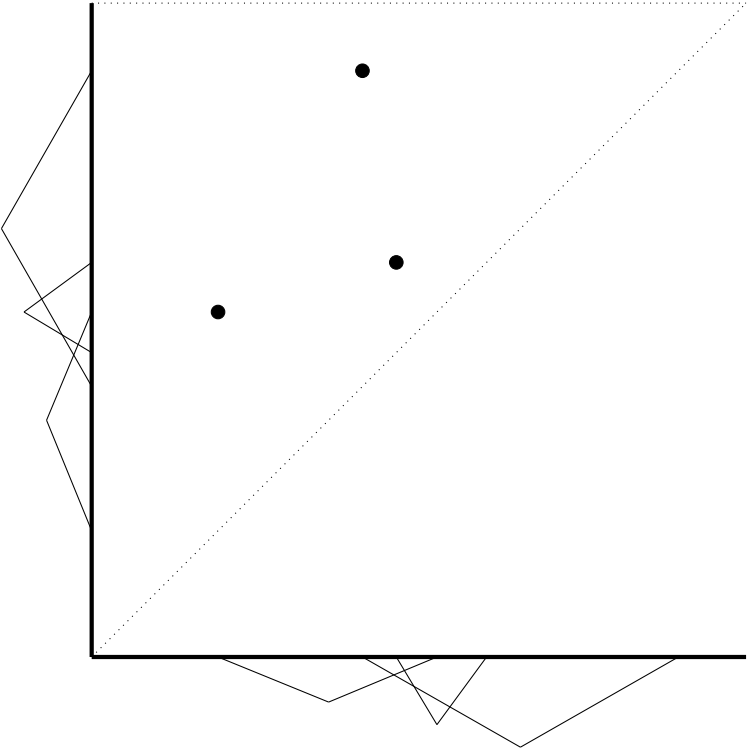
 

When we cut the single long segment in~\eqref{segment 1} into $k=3$
segments,
we use two indices;  the first runs from 1 to $k$, and the second runs
from 0 to $t$.  Including the relation with~\eqref{segment 1}, for
Example 1, but with the labels $e_1,\ldots,e_k$ overloaded --- since
they also appear on the left side, naming the starting edges for
the $k$ segments --- 
this will give
$$
\begin{array}{ccccc}
\seg(f,e_1,t) & = & e_{1,0}\cdots e_{1,90} & = & e_0\cdots e_{90}
\\
\seg(f,e_2,t) & = & e_{2,0}\cdots e_{2,90} & = & e_{100}\cdots e_{190}
\\
\seg(f,e_3,t) & = & e_{3,0}\cdots e_{3,90} & = & e_{200}\cdots e_{290}.
\end{array}
$$

The same $k=3$ segments of length $t=90$, presented as lists of
vertices (which here are 9-tuples) are notated as
\begin{equation}\label{segment vertex}
\begin{array}{ccccc}
\seg(f,e_1,t) & = & v_{1,0}v_{1,1}\cdots v_{1,90}v_{1,91} & = & v_0v_1\cdots v_{90}v_{91}
\\
\seg(f,e_2,t) & = & v_{2,0}v_{2,1}\cdots v_{2,90}v_{2,91} & = & v_{100}v_{101}\cdots v_{190}v_{191}
\\
\seg(f,e_3,t) & = & v_{3,0}v_{3,1}\cdots v_{3,90} v_{3,91} & = & v_{200}v_{201}\cdots v_{290}v_{291}.
\end{array}
\end{equation}

Collectively, these $k$ segments are given by a deterministic function
of
$(f,\be)$,  where $\be=(e_1,e_2,\ldots,e_k)$ names all $k$ starting points.

\subsection{Coloring} \label{sect color}

Imagine the $k$ segments of length $t$ as pieces of (directed) yarn,
with $k$ different ``primary'' colors. Vertices that appear only once
get the primary color of the segment they come from; vertices that
appear twice on the same segment might be visualized as having a more
saturated version of the primary color of that segment.  The
interesting case occurs when a vertex appears on two different
segments; such a vertex, call it $\vs$, gets each of two primary
colors --- and its secondary color shows which two segments this
vertex lies on; for example imagine that the two strands are red and
yellow, so that $\vs$ is colored orange.    Figure \ref{fig 3} on page \pageref{fig 3} and Figure \vref{fig 4} 
 illustrate this coloring.


\begin{figure}
   \begin{tikzpicture}[scale=.04]
   \draw  (0,0) -- (290,0);  
   \draw[fill=red,ultra thick,red] (0,0) -- (90,0);
   \draw[fill=yellow,ultra thick,yellow] (100,0) -- (190,0);
   \draw[fill=blue,ultra thick,blue] (200,0) -- (290,0);
   \draw (56,0) --(105,-20);\draw (153,0) --(105,-20);
   \draw (120,0) --(190,-40);\draw (260,0) --(190,-40);
   \draw (135,0) --(153,-30);\draw (175,0) --(153,-30);
 
   \end{tikzpicture}
   \caption{Coloring.  An example, with $k=3$, $n=10,t=90$.  The same
     one segment of length 290, as in Figure~\vref{fig 1}, where there
     are three leftmost $(n-1)$-tuple repeats, at (56,153), (120,260),
     and (135,175).  Now the first segment is colored red, the second
     yellow, and the third blue.}\label{fig 3}  
\end{figure}
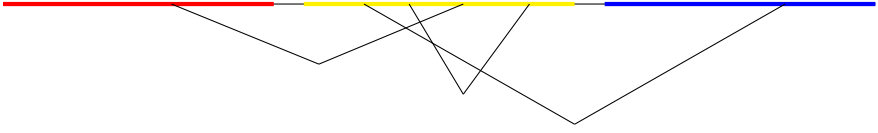

\begin{figure}
   \begin{tikzpicture}[scale=.03]
  
   \draw  (0,0) -- (290,0);  
   \draw[fill=red,ultra thick,red] (0,0) -- (90,0);
   \draw[fill=yellow,ultra thick,yellow] (100,0) -- (190,0);
   \draw[fill=blue,ultra thick,blue] (200,0) -- (290,0);
   \draw (56,0) --(105,-20);\draw (153,0) --(105,-20);
   \draw (120,0) --(190,-40);\draw (260,0) --(190,-40);
   \draw (135,0) --(153,-30);\draw (175,0) --(153,-30);
   \draw (0,0) -- (0,290);
   \draw[fill=red,ultra thick,red] (0,0) -- (0,90);
   \draw[fill=yellow,ultra thick,yellow] (0,100) -- (0,190);
   \draw[fill=blue,ultra thick,blue] (0,200) -- (0,290);
   \draw (0,56) --(-20,105);\draw (0,153) --(-20,105);
   \draw (0,120) --(-40,190);\draw (0,260) --(-40,190);
   \draw (0,135) --(-30,153);\draw (0,175) --(-30,153);

   \draw [dotted] (0,0) -- (290,290);    
   \draw [dotted] (0,290) -- (290,290);    
   \draw [dotted] (0,100) -- (90,190); 
   \draw [dotted] (0,200) -- (90,290);
   \draw [dotted] (100,200) -- (190,290); 
   
\draw [fill=orange,orange] (56,153) circle [radius=3];
   \draw [fill=green,green] (120,260) circle [radius=3];
   \draw [fill=yellow,yellow] (135,175) circle [radius=3];

   \draw [red] (0,190)--(90,190);
   \draw [red] (0,200)--(90,200); 
   \draw [red] (0,290)--(90,290); 

   \draw [red] (0,100)--(90,100);
   \draw [yellow] (100,200)--(190,200); 
   \draw [yellow] (90,100)--(90,190);  
   \draw [blue] (90,200)--(90,290);
   \draw [yellow] (100,290)--(190,290);
   \draw [blue] (100,200)--(100,290);
   \draw [blue] (190,200)--(190,290);

   \end{tikzpicture}
   \caption{Coloring and cutting;  a succinct way to visualize both.
     The $k$ segments of length $t$ are still shown as they appear
     along
the single segment of length $k(t+n)-n$.   We also show the ${k
  \choose 2}$ $t$ by $t$ squares where matches may occur between two
differently colored length $t$ segments.  Note the repeat at (56,153)
is a vertex colored both red and yellow, hence orange.  The repeat at
(120,260) is a vertex colored both yellow and blue, hence green.  The
vertex at (135,175) is colored yellow twice - we could show
it as an extra-saturated yellow but did not. The significance of the 
diagonals of the
small squares is explained in Section \ref{sect earliest}.} \label{fig 4}
\end{figure}
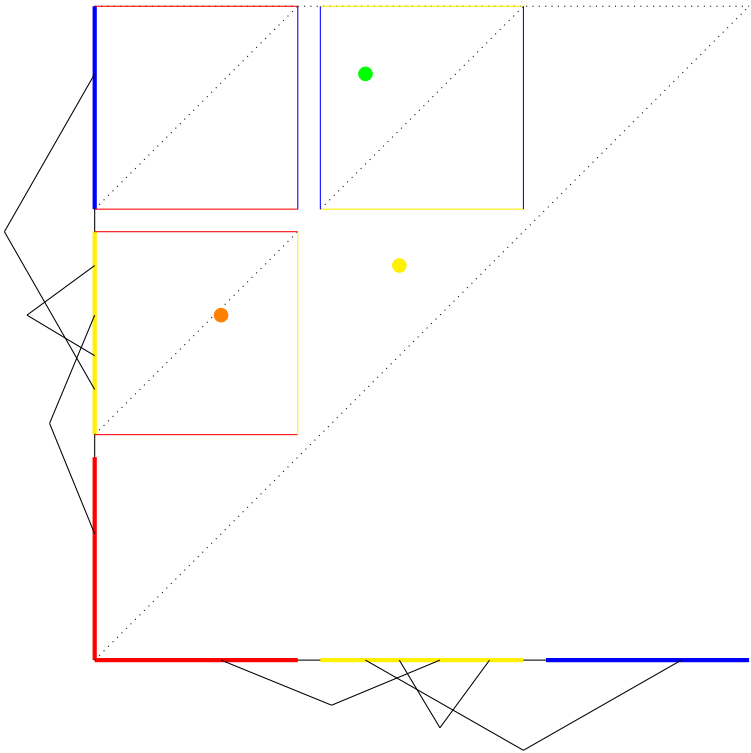


\section{Toggling}\label{sect toggling}

To \emph{toggle} a logic $ f:  \mathbb{F}_2^{n-1} \to \mathbb{F}_2$
 at a vertex $v \in  \mathbb{F}_2^{n-1}$
is simply to get a new $f$ from the old, by changing the value at $v$.
This is called a ``cross-join step'', and is studied extensively in
the context of 
cycle joining algorithms to create a full cycle logic.  Our interest
in toggling is different:  we have $k \ge 2$  segments induced by a
logic $f$ and $k$ starting $n$-tuples, $e_1,\ldots,e_k$, and we want
to choose $m$ different ``toggle points'' in the role of $v$,
to get a \emph{nice} family of $2^m$ related logics.  All this is done
in the interest of showing that the chance that $e_1$ and $e_2$ lie on
the same cycle of $\pi_f$ is approximately one-half, for large $n$,
and more generally, that the chance $e_1,\ldots,e_k$ all lie on the
same cycle is approximately $1/k$, and even more, that the permutation
$\pi_f$, relativized to $e_1,\ldots,e_k$, is approximately uniformly
distributed over all $k!$ permutations.  This introductory paragraph is
intentionally short and vague; the full details use all of 
Sections \ref{sect coins} -- \ref{sect together}. 
Section \ref{sect big} gives a longer attempt at introduction,
including
Figure~\vref{fig 5}, showing the huge collection of candidate toggle
vertices, using $k$ \emph{colors} to help visualize the $k$
segments of interest. 

\subsection{Big Picture Perspective:  $k$ Colored Segments, $m$ Toggle
Points}\label{sect big}

We will have $k$ segments each of length $t=N^{.6}$.   The expected
number of leftmost $(n-1)$-tuple repeats within a single segment is about ${t \choose 2}/N \doteq
.5 N^{.2}$.
The expected number of
repeats between two different given segments is about $t^2 / N = N^{.2}$, 
so the expected number of repeats between two different segments,
combined over all 
${k \choose 2}$
choices for which two segments, is about ${k \choose
  2} \times N^{.2}$.   This is a huge number of repeats (each based on
one vertex having a secondary coloring), and we intend to find $m$
such repeats, say at $\vs_1,\ldots,\vs_m$ in  narrowly constrained spatial positions.  The goal is
to show that, with high probability, for all $2^m$ choices of how to
change $f$ by toggling the values of $f(\vs_i)$ for $i \in I \subset
\{1,2,\ldots,m\}$, the \emph{same} $m$ vertices will be picked out by
the 
narrow two-dimensional spatial constraints.

In this section we present further figures intended to assist the
reader's intuition.  We also give an algorithm which for a given logic
$f$ and starting edges $e_i$ finds $m$ vertices $\vs_1$,
\dots,$\vs_m$.  These points---we call them toggle points---give rise
to a family of $2^m$ functions.  We also define 
(in Section \ref{sect tangle}) a process 
called {\it
  relativisation} which associates with $\pi_f$ in $S_N$ a permutation
$\sigma$ in $S_k$, $k$ being fixed and $N \to \infty$.  It will be
shown that as $f$ varies over the $2^m$ functions in a ``toggle
class'', the resulting $\sigma's$ cover $S_k$ almost uniformly.  In
Section~\ref{sect sampling}, it will be shown that this uniform
coverage of $S_k$ (for each fixed $k$) is a sufficient condition to
prove Theorem~\ref{theorem 1}.

A critical issue is that the algorithm for choosing the toggle points
must be such that, if the feedback $f$ is replaced by any one of the $2^m$ 
functions in the toggle class, the algorithm would find the same toggle
points and the same class.\footnote{Consider the
simplest situation, $k=2$
and $m=1$, where one is trying to prove  \eqref{thm 1 E}
by showing that $\p(e_1,e_2$ lie on the same cycle) is approximately
one half. 
Knowing that the segments starting from $e_1$ and $e_2$ have high
probability of reaching a common vertex $\vs$, 
and that performing a cross-join step by toggling the
logic $f$ at this $\vs$, to get a new logic $\fs$, \emph{changes} whether or not
$e_1$ and $e_2$ lie on the same cycle, one might consider the proof
complete.
The fallacy is that this procedure does not \emph{pair up} $f$ with
$\fs$, \textit{i.e.}, it need not be the case that $(\fs)^* = f$, because the
procedure used to find $\vs$ (from $f$, given $e_1,e_2$) might find a
different $v$ when applied to $\fs$.  Overcoming this fallacy entails
the study of \emph{displacements}, starting  in 
Sections \ref{sect sub toggle} -- \ref{sect displace}.
\label{footnote fallacy} }  
However this is not necessarily the case
and the probability of success for the algorithm must be estimated;
this leads to the definition of an event~$H$.
\begin{figure}
\begin{center}
\includegraphics{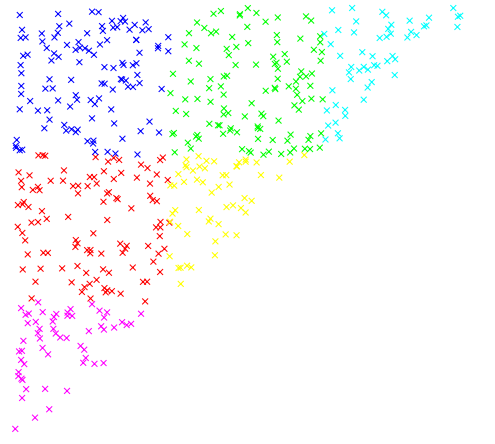}
\end{center}
\caption{Take $n=34$, $N=2^n$, and $t=3 \times (n+N^{.6})-n$.  The
  expected number of leftmost $(n-1)$-tuple repeats is about ${t
    \choose 2}/N \doteq 501.4$.  The picture shows 500 ``arrival''
  points, giving the locations of repeats, plotted as for one segment
  of long length $t$.  In each $N^{.6}$ by $N^{.6}$ square, the
  expected number of points is $N^{.2} \doteq 111.4$.  The color scheme is intended to be
purple, green, blue across the top row, orange, yellow for the middle, and red (magenta) for the bottom.}\label{fig 5}
\end{figure}

\begin{figure}
\begin{center}
\includegraphics{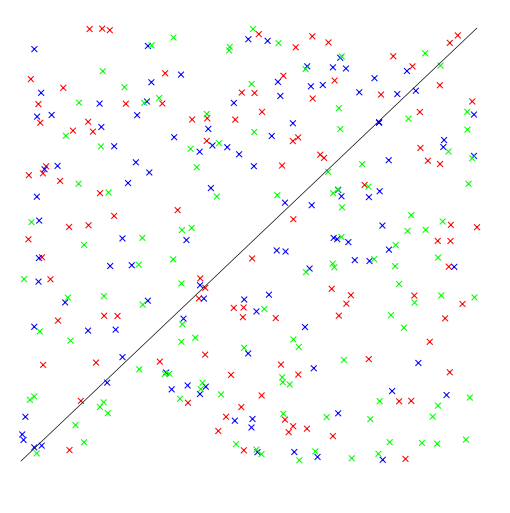}
\end{center}
\caption{About 333 of the 500 occurrences of repeats from
  Figure~\vref{fig 5}, but now viewed as among $k=3$ segments of
  length $t=N^{.6} \doteq 1.38 \times 10^6$. The ${k \choose 2}$ $t$
  by $t$ above-diagonal squares from Figure~\ref{fig 5} are
  superimposed, so the expected number of points is about ${k \choose 2}
  \times N^{.2} \doteq 334.3$. The approximately 167.1 repeats where
  both occurrences lie in the same segment, corresponding to the $k$
  right triangles hugging the diagonal in Figure~\ref{fig 5}, are not
  shown.  In Section \ref{sect scale} we discuss this picture,
  suggesting scaling for the axes, so that in each color, the picture
  is approximately a standard (rate 1 per unit area) two-dimensional
  Poisson process.  The color scheme is intended to be
purple, green, orange.}\label{fig 6}
\end{figure}


\subsection{Toggling:  The Case $k=2$ and $m=1$.}\label{sect sub toggle}

We show what can happen when we toggle one bit of a logic $f$.  We
have two segments of length $t$, which share a vertex $\vs$.  Toggling
changes the value of $f$, only at $\vs$, and gives a new logic $\fs$.
Suppose that the segments under $f$ were red and yellow, and that
$\vs$ appears at position $i$ on the red segment, and position $j$ on
the yellow segment.  Overall, this repeat has spatial location
$(i,j)$, and color orange.  Exactly one such repeat was visualized in
Figures~\vref{fig 2} and \vref{fig 3};  it occurred with $(i,j)=(56,53)$.
The \emph{displacement} is $i-j$  
 --- we have a preferred sequence of
colors,  (derived from the rainbow ROY G. BIV)
where red comes before yellow --- hence 
the displacement is 3, rather than $-3$, in this example.

\begin{figure}
   \begin{center}
   \begin{tikzpicture}[scale=.10]
   \draw[fill=red,ultra thick,red] (0,40) -- (90,40);
   \draw[fill=yellow,ultra thick,yellow] (0,30) -- (90,30);
   \draw [fill=orange,orange] (56,40) circle [radius=1];
   \draw [fill=orange,orange] (53,30) circle [radius=1];

   \draw[fill=red,ultra thick,red] (0,10) -- (56,10);
   \draw[fill=yellow,ultra thick,yellow] (56,10) -- (93,10);
   \draw[fill=yellow,ultra thick,yellow] (0,0) -- (53,0);
   \draw[fill=red,ultra thick,red] (53,0) -- (87,0);
   \draw [fill=orange,orange] (56,10) circle [radius=1];
   \draw [fill=orange,orange] (53,0) circle [radius=1];
   
   \draw[dotted]  (90,25)--(90,45);
   \draw[dotted]  (93,15)--(93,5);
   \draw[dotted]  (87,5)--(87,-5);

   \draw (-10,37)  node[anchor= north west]{$f$};
   \draw (-10,7)  node[anchor= north west]{$\fs$};
   \end{tikzpicture}
   \end{center}   
\caption{Toggling.  An example with $t=90$, and displacement $d=3$.  The same
     repeat as shown in Figure~\vref{fig 3} with location (56,153), and shown
 by the orange dot in Figure~\vref{fig 4}.
When all ${k \choose 2}$ squares are superimposed, as in Figure~\vref{fig 6}, the spatial location becomes $(i,j)=(56,53)$.   Before
the
toggle, we have two segments of length $t=90$;  after the toggle, the
segments have length $t \pm d$, that is, 93 and 87.
}\label{fig 7}
\end{figure}
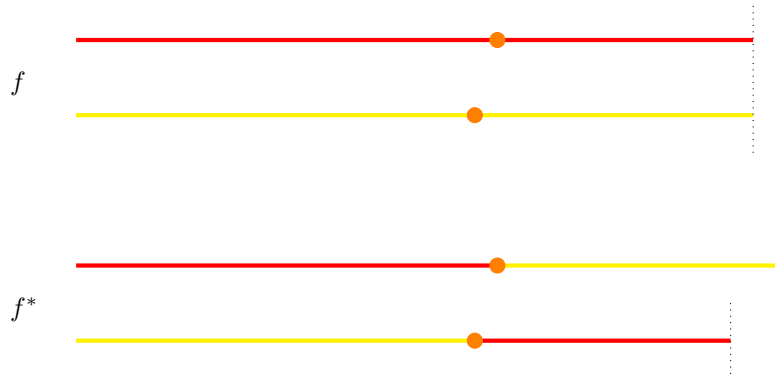
 

\subsection{Picking the ``Earliest'' Toggle with a Small Displacement}
\label{sect earliest}

Consider the case where we have $k=2$ segments, and
want to find a single vertex $\vs$ 
via a recipe which, when applied to the segments under the toggled
logic $\fs$, still picks out the same vertex.  A good recipe involves
naming a small bound $d$ on the absolute displacement $|i-j|$ (thus
staying close to the ``diagonal''),
and then picking the ``earliest'' pair $(i,j)$ that satisfies
the displacement bound. This was the key to overcoming the ``fallacy''
described in Footnote~\ref{footnote fallacy}.

The specific choice of how to define \emph{earliest} is somewhat
arbitrary;  we will take smallest $(i+j)$ as the first criterion for 
earliest, with ties to be broken
according to smallest value of $\max(i,j)$ --- given that $i+j =
i'+j'$, this is equivalent to taking smallest absolute displacement
for the tie-break criterion. For use in the case of $k$ colors and
  ${k \choose 2}$ color pairs $\alpha=(a,b)$, break further ties
  according
to $\min(a,b)$ and then $\max(a,b)$.

The logic $f$, with value at $\vs$ complemented, gives a new logic
$\fs=\toggle(f,\{\vs \})$, so that $\fs(\vs)=1-f(\vs)$, while
$\fs(v)=f(v) \ \ \forall v \ne \vs$.  It is \emph{possible} that
changing the logic bit at $\vs$, will cause an earlier pair to become
available as the location of a match between the two segments; so that
the recipe for picking the earliest small displacement match, applied
to $\fs$, picks out a \emph{different} vertex instead of $\vs$. In
this case, the word \emph{toggle} is very misleading: the overall
operation (find $\vs$, then complement the logic at that vertex) is
not an involution.  Our program is to specify a displacement bound $d$
that varies with $n$, in such a way that 1) with high probability, at
least one small displacement match can be found, and 2) with high
probability, the vertex for the earliest small displacement match is
the same in the logic $\fs =\toggle(f,\{\vs \})$  at the vertex selected for
$f$.  The example in Figure~\vref{fig 8}, viewed with any $d \ge 3$,
illustrates what might go wrong with respect to 2).

Recall, from Section \ref{sect coins}, that $t$ is the length of our
segments. 
To get high probability in 1),  a necessary and sufficient condition is that 
\begin{equation}\label{need 1} 
   td/N \to \infty.
\end{equation}
To get high probability in 2), a necessary and sufficient condition is
that 
\begin{equation}\label{need 2}
d^2/N \to 0.
\end{equation}
The argument that~\eqref{need 2} suffices is somewhat delicate, akin
to a stopping time argument; it is easier to prove --- see 
\eqref{overlapping squares}  --- that a
\emph{sufficient}
condition is that
\begin{equation}\label{need 3}
td^3/N^2 \to 0;
\end{equation}
and then it will be easy to arrange for situations corresponding to
pairs
$(t,d)$ satisfying both~\eqref{need 1} and~\eqref{need 3}.

\begin{figure}
   \begin{center}
   \begin{tikzpicture}[scale=.10]
   \draw[fill=red,ultra thick,red] (0,40) -- (90,40);
   \draw[fill=yellow,ultra thick,yellow] (0,30) -- (90,30);
   \draw [fill=orange,orange] (56,40) circle [radius=1];
   \draw [fill=orange,orange] (53,30) circle [radius=1];
   \draw [fill=red,red] (53,40) circle [radius=1];
   \draw [fill=red,red] (58,40) circle [radius=1];
  
   \draw (-10,37)  node[anchor= north west]{$f$};
   \draw (-10,7)  node[anchor= north west]{$\fs$};

   \draw[fill=red,ultra thick,red] (0,10) -- (56,10);
   \draw[fill=yellow,ultra thick,yellow] (56,10) -- (93,10);
   \draw[fill=yellow,ultra thick,yellow] (0,0) -- (53,0);
   \draw[fill=red,ultra thick,red] (53,0) -- (87,0);
   \draw [fill=orange,orange] (56,10) circle [radius=1];
   \draw [fill=orange,orange] (53,0) circle [radius=1];
   \draw [fill=red,red] (53,10) circle [radius=1];
   \draw [fill=red,red] (55,0) circle [radius=1];   
   \draw[dotted]  (90,25)--(90,45);
   \draw[dotted]  (93,15)--(93,5);
   \draw[dotted]  (87,5)--(87,-5);

   \end{tikzpicture}
   \end{center}   
\caption{Toggling.  This is a continuation of the example in
  Figure~\vref{fig 7}, with one repeat with location (56,153), shown
  by the orange dot in Figure~\vref{fig 4}.  When all ${k \choose 2}$
  squares are superimposed, as in Figure~\vref{fig 6}, the spatial
  location becomes $(i,j)=(56,53)$.  Now suppose there were an
  additional repeat, (which would have been shown by a red dot at
  (53,58) in Figure~\vref{fig 4},) shown here in Figure~\vref{fig 8} by
  the pair of red dots for $f$.  After the toggle at the orange
  vertex, vertex 53, along the segment that starts red and finishes
  yellow, is the same as vertex 55, along the segment that start
  yellow and finishes red.  So, in the logic $\fs$, we have two
  matches between the two segments: the original, at (56,53), shown by
  the orange dots, and a new one, at (53,55), shown by the red dots.
}\label{fig 8}
\end{figure}
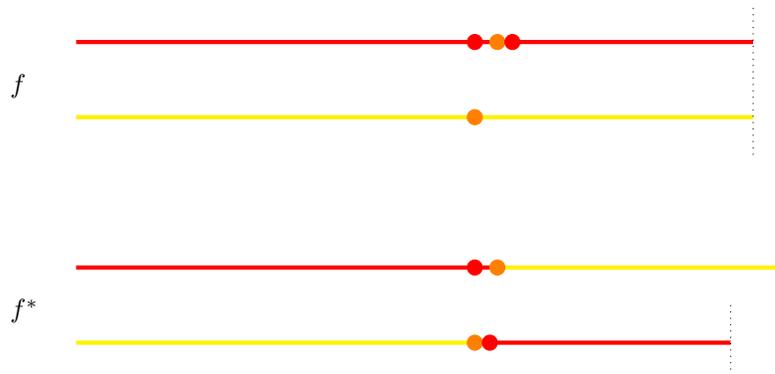
 

\subsection{Displacements Caused by Toggles}\label{sect displace}

Suppose we have $k=3$ colors, as shown in Figure~\vref{fig 9}.
There are three segments of length $t=90$, with respect to $f$.
The segment  with respect to $f$, starting with $e_{1}$, colored
red,  has $\vs_1$ in position 6 and $\vs_2$ in position 35 --- so
the red segment, of length 90, is divided into an initial red path of length 6, followed by a red path of
length 29, followed by a red path of length 55.

The $f$ segment starting with $e_{2}$, colored yellow, has $\vs_1$
in position 3, and $\vs_3$ in position 75,  hence it is divided
into  yellow paths of lengths 3, 72, 15, in that order.

The $f$ segment starting with $e_{3}$, colored blue, has $\vs_2$
in position 37, and $\vs_3$ in position 71,  hence it is divided
into  blue  paths of lengths 37, 34, 19, in that order.

Next, consider $\fs := f$, toggled at $\vs_1$.  Its segment starting
from $e_{1}$ has length 6 red followed by length (72+15)=87, for a
total
length of 93.   Its segment starting from $e_{2}$ has length 3
yellow, followed by length (29+55)=84, for a total length of 87.   The
$\fs$ segment starting from $e_{3}$ is still length 90, all blue.
More importantly, $\vs_2$ has moved from position 35 on $\seg(f, 
e_1)$ to position 32 on $\seg(f,e_2)$,  and  
$\vs_3$ has moved from position 75 on $\seg(f, 
e_2)$ to position 78 on $\seg(f,e_1)$, so these have new positions
under $\fs$, \textit{i.e.}, have been displaced.

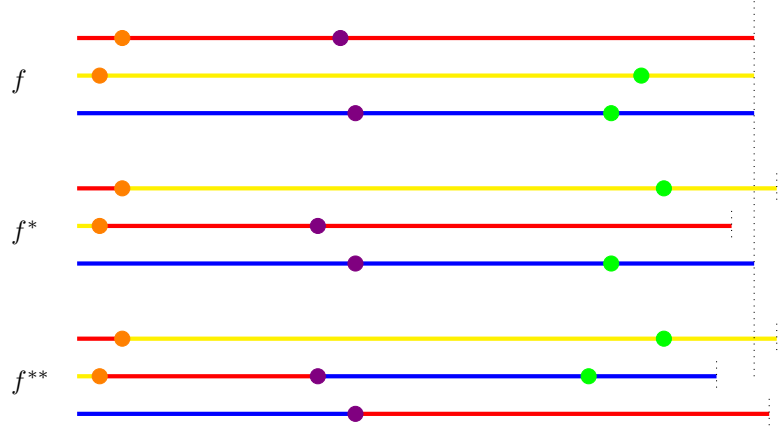
\begin{figure}
   \begin{center}
   \begin{tikzpicture}[scale=.10]

   \draw[fill=red,ultra thick,red] (0,40) -- (90,40);
   \draw[fill=yellow,ultra thick,yellow] (0,35) -- (90,35);
   \draw[fill=blue,ultra thick,blue] (0,30) -- (90,30);

   \draw [fill=orange,orange] (6,40) circle [radius=1];
   \draw [fill=orange,orange] (3,35) circle [radius=1];
   \draw [fill=violet,violet] (35,40) circle [radius=1];
   \draw [fill=violet,violet] (37,30) circle [radius=1];
   \draw [fill=green,green] (75,35) circle [radius=1];
   \draw [fill=green,green] (71,30) circle [radius=1];

   \draw (-10,37)  node[anchor= north west]{$f$};
   \draw (-10,17)  node[anchor= north west]{$\fs$};
   \draw (-10,-3)  node[anchor= north west]{$\fss$};
   \draw[fill=red,ultra thick,red] (0,20) -- (6,20);
   \draw[fill=yellow,ultra thick,yellow] (6,20) -- (93,20);
   \draw[fill=yellow,ultra thick,yellow] (0,15) -- (3,15);
   \draw[fill=red,ultra thick,red] (3,15) -- (87,15);
   \draw[fill=blue,ultra thick,blue] (0,10) -- (90,10);
 
   \draw [fill=orange,orange] (6,20) circle [radius=1];
   \draw [fill=orange,orange] (3,15) circle [radius=1];
   \draw [fill=violet,violet] (32,15) circle [radius=1];
   \draw [fill=violet,violet] (37,10) circle [radius=1];
   \draw [fill=green,green] (78,20) circle [radius=1];
   \draw [fill=green,green] (71,10) circle [radius=1];

   \draw[fill=red,ultra thick,red] (0,0) -- (6,0);
   \draw[fill=yellow,ultra thick,yellow] (6,0) -- (93,0);
   \draw [fill=orange,orange] (6,0) circle [radius=1];
   \draw [fill=green,green] (78,00) circle [radius=1];
   \draw[fill=yellow,ultra thick,yellow] (0,-5) -- (3,-5);
   \draw[fill=red,ultra thick,red] (3,-5) -- (32,-5);
   \draw[fill=blue,ultra thick,blue] (32,-5) -- (85,-5);
   \draw [fill=orange,orange] (3,-5) circle [radius=1];
   \draw [fill=violet,violet] (32,-5) circle [radius=1];
   \draw [fill=green,green] (68,-5) circle [radius=1];
   \draw[fill=blue,ultra thick,blue] (0,-10) -- (37,-10);
   \draw[fill=red,ultra thick,red] (37,-10) -- (92,-10);
   \draw [fill=violet,violet] (37,-10) circle [radius=1];

   \draw[dotted]  (90,-5)--(90,45);
   \draw[dotted]  (93,22)--(93,18);
   \draw[dotted]  (87,17)--(87,13);
   \draw[dotted]  (93,2)--(93,-2);
   \draw[dotted]  (85,-3)--(85,-7);
   \draw[dotted]  (92,-8)--(92,-12);
 
   \end{tikzpicture}
   \end{center}   
\caption{With starting edges $e_1,e_2,e_3$,  three segments under the
  logic $f$ are shown in the top part of the display;  
the red and yellow segments
  share a vertex $\vs_1$, colored orange, early on,  
the red and blue segments share a vertex $\vs_2$, colored purple, at
  a intermediate time, and 
the yellow and blue segments share a vertex $\vs_3$, colored green, at a late
  time.   We take  $\fs=\toggle(f,\{\vs_1 \})$ and 
 $\fss=\toggle(f,\{\vs_1,\vs_2 \})$ to be the logics formed by
  toggling at $\vs_1$, and at both $\vs_1$ and $\vs_2$.   The middle
  part of the display shows the three segments under $\fs$, and the
  bottom part of the display shows the three segments under $\fss$.
}\label{fig 9}
\end{figure}
 

Now consider the full effect of changing from $f$ to $\fs$, by
toggling the logic at the bit $\vs_1$ which appeared at positions
$(i,j)=(i,i-d) = (6,3)$, with $d=3$, for the red and yellow segments:
every red vertex later than 6 gets displaced by $-d$, and every yellow vertex 
later than 3 gets displaced by $+d$.  If a match 
occurs at $(I,J)$ in the $f$ segments, and the colors involved are
red, and some color, call it $a$, with $a$ not equal to yellow, then:
\begin{itemize}

\item Case 1.  Color $a$ comes after red, in the list of $k$ colors:
  the ordered color pair is (red,$a)$.
  The index $I>i$ belongs to a red vertex in position $I$ under
  the logic $f$, and this vertex has position $I-d$ under the logic $\fs$.
  So the point at $(I,J)$ \emph{moves} to position $(I-d,J)$.\footnote{More
    formally,
the point at $(I,J)$, labeled by the pair of colors ($a$,red), in the
colored-spatial process of indicators of matches between segments
under $f$, \emph{corresponds} to a point at $(I-d,J)$ in the colored-spatial
process for $\fs$.}

\item Case 2.  Color $a$ comes before  red, in the list of $k$ colors;
the color pair is $(a,$red).
  The index $J>i$ belongs to a red vertex, in position $J$ under
  the logic $f$, but in position $J-d$ under the logic $\fs$.
  So the point at $(I,J)$ moves to position $(I,J-d)$.

\end{itemize}

If there is an orange match at $(I,J)$ for the $f$ segments, with
$I>i$ and $J>j$, this match will move to $(I-d,J+d)$.

Similarly, a match between yellow, and some $a$ not equal to red,
occurring at $(I,J)$ under the logic $f$, moves to $(I+3,J)$ or
$(I,J+3)$ under the logic $\fs$, according to whether $a$ comes
after or before yellow, in the list of all $k$ colors.

This effect can be seen in Figure~\vref{fig 9}:  the orange dot is at
(6,3) with displacement $d=3$, the purple dot occurs
at (35,37) under $f$, but at (32,37) under $\fs$ and $\fss$.

More cases can be seen in Figure~\vref{fig 10}.

\begin{figure}
   \begin{center}
   \begin{tikzpicture}[scale=.10]

   \draw (-10,-10)  node[anchor= north west]{$f$};
   \draw (-10,-25)  node[anchor= north west]{$\fs$};
   \draw[fill=red,ultra thick,red] (0,-10) -- (90,-10);
   \draw[fill=blue,ultra thick,blue] (0,-15) -- (90,-15);
   \draw [fill=violet,violet] (35,-10) circle [radius=1];
   \draw [fill=violet,violet] (40,-15) circle [radius=1];

   \draw[fill=red,ultra thick,red] (0,-25) -- (35,-25);
   \draw[fill=blue,ultra thick,blue] (35,-25) -- (85,-25);
   \draw[fill=blue,ultra thick,blue] (0,-30) -- (40,-30);
   \draw[fill=red,ultra thick,red] (40,-30) -- (95,-30);
   \draw [fill=violet,violet] (35,-25) circle [radius=1];
   \draw [fill=violet,violet] (40,-30) circle [radius=1];

   \draw [fill=violet,thick] (35,40) circle [radius=1];

   \draw[dotted]  (0,0)--(90,0);
   \draw[dotted]  (0,0)--(0,90);  
   \draw[dotted]  (90,0)--(90,90);
   \draw[dotted]  (0,90)--(90,90);
   \draw[dotted]  (0,0)--(90,90); 
   \draw[dotted]  (35,0)--(35,90); 
   \draw[dotted]  (0,40)--(90,40);
   \draw [fill=violet,violet] (5,13) circle [radius=1];
   \draw [fill=orange,orange] (15,9) circle [radius=1];

   \draw [fill=orange,orange] (50,52) circle [radius=1];
   \draw [orange] (55,52) circle [radius=1];

   \draw [fill=orange,orange] (40,25) circle [radius=1];
   \draw [orange] (45,25) circle [radius=1];

   \draw [fill=green,green] (20,60) circle [radius=1];
   \draw [green] (20,55) circle [radius=1];

   \draw [fill=violet,violet] (70,82) circle [radius=1];
   \draw [violet] (75,77) circle [radius=1];

   \end{tikzpicture}
   \end{center}   
\caption{Displacements caused by a single toggle.  An example with
  $t=90$, and three colors, red, yellow, blue.  Say the toggle is at
  $\vs_2$ occurring at (red,blue) time $(35,40)$, similar to the purple
  vertex at ($35,37)$ in Figure~\vref{fig 9}, but with the displacement
  changed from -2 to -5, for the sake of being easier to see in the
  two-dimensional picture.  We have thrown in several more matches
  between two different colors, at various earlier and later times, to
  show the resulting two-dimensional displacements.  Red vertices at
  times greater than 35 have their time increased by 5, and blue
  vertices at times greater than 40 have their time decreased by 5.
  Two-dimensional match locations are indicated by a solid circle for
  the logic $f$, and an open circle for the logic $\fs$.  }\label{fig
  10}
\end{figure}
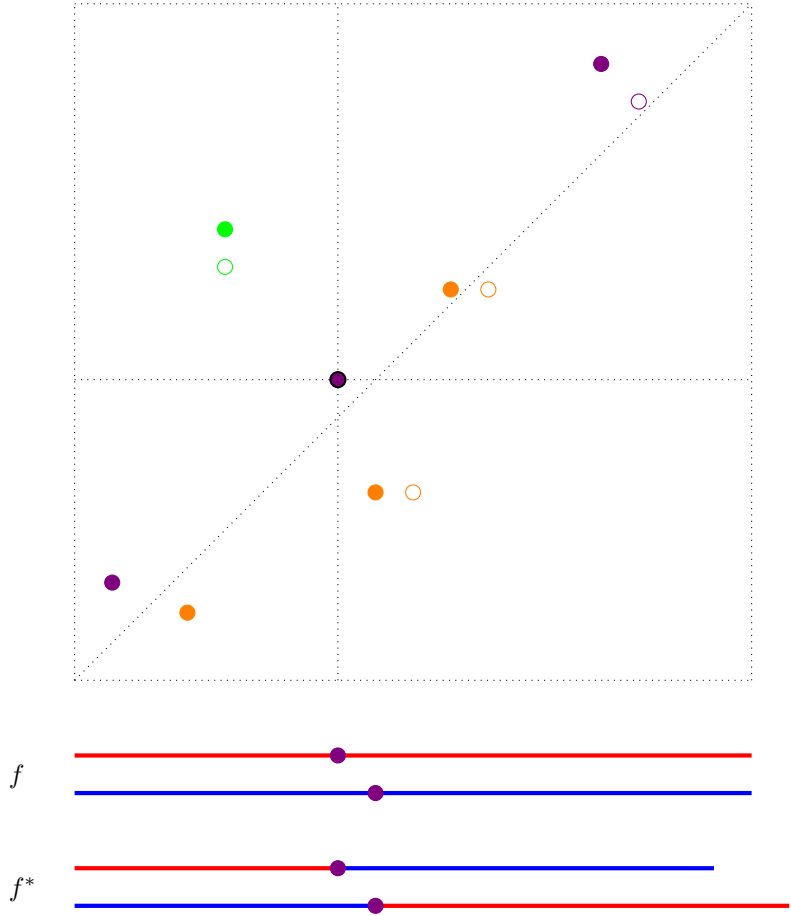
 

\subsection{The Natural Scale:  by $1/\sqrt{N}$ for length, by $1/N$
  for area}\label{sect scale}

One gets an intuitive grasp of the \emph{process of spatial locations}
of places $(i,j)$ where two segments of different colors share a
vertex, by looking at a picture such as that in Figure~\vref{fig 6}
--- even though the axes are unlabeled.

One view would be that the square is $t$ by $t$,  with $n=34, N=2^n,t=N^{.6}
$, \textit{i.e.}, about 1.3 million by 1.3 million.  The other natural view is
that the square is about $t/\sqrt{N}$ by $t/\sqrt{N}$, \textit{i.e.}, about
10.556 by 10.556, with area 111.43.

The latter point of view is natural, since at each $(i,j)$, for
each color pair
$(a,b)$, $1 \le a < b \le k$, with $\doteq$ to allow a small discrepancy for the failure of
the good event, $\p($an \emph{arrival}\footnote{This jargon comes from
  queuing theory and Poisson arrival processes; we say there is an
  arrival at $(i,j)$ if the indicator indexed by $(i,j)$ takes the
  value 1, here indicating that there is an $(n-1)$-tuple repeat.} at $(i,j)$ in those colors) 
$  := \p(v_{a,i}=v_{b,j}$ and $v_{a,i-1} \ne v_{b,j-1})$
$ \doteq \p($there is a
leftmost $(n-1)$-tuple repeat at a specific location\footnote{The
  precise location doesn't matter, but, using Section \ref{sect cutting}, 
  the location is $(i_0,j_0)$ where
$i_0 = i+(n+t)(a-1)$ and $j_0=j+(n+t)(b-1)$.
}
 in the coin tossing sequence)
$= 1/N$.  Hence, scaling length   by $1/\sqrt{N}$, so that area is
scaled by $1/N$, leads to
$$
   \text{ the expected number of arrivals per unit area } =1.
$$

The picture in Figure~\vref{fig 6}, viewed as occurring on a 10.556 by
10.556 square, closely resembles a (standard, rate $1 \, dy \, dx$)
two-dimensional Poisson process, in each secondary color pair.  And
overall, ignoring color, the picture resembles the rate ${k \choose 2}
\, dy \, dx$ Poisson process on the $t/ \sqrt{N}$ by $t/ \sqrt{N}$
square.

There are additional requirements for the Poisson process, beyond
  having
intensity  $1 \, dy \, dx$.  Namely, probabilistic independence for
the counts in disjoint regions.  We do have a good Poisson process
approximation, for a combination of two reasons.
First, the good event $G=\Gkt$ from Theorem \ref{kt theorem} gives a
high-probability coupling (since $t=N^{.6}$ entails $t^3n^3/N^2 \to 0$) 
between coin tossing and the $k$ de Bruijn segments of length $t$.
Second, the Chen-Stein method, Theorem 3 of \cite{AGG89}, gives a 
total-variation distance
upper bound (tending to zero since $t=N^{.6}$ entails $t^3n/N^2 \to 0$) 
between the process of indicators of leftmost
$(n-1)$-tuple repeats
for coin tossing, and a process with the same intensity, but mutually
independent coordinates.

We get our intuition from the Poisson process.  But for our proofs, we
will
work directly with the discrete, dependent processes.

\subsection{Controlled Regions for $m$ Successive Potential Toggle
  Vertices}

\subsubsection{Quick Motivation for the Geometric Progression}
\label{sect motivate regions}

We will construct \emph{choice} functions in \eqref{def choice}, based on 
\emph{regions}, defined in \eqref{def region}, which in turn are
based on a geometric progression
in~\eqref{geometric}.  Here we give some motivation for this elaborate construction.

If we search for a single toggle point, in a thin and long rectangle
along the diagonal, $\{ (i,j): |i-j| \le d, 0\le i,j \le t \}$, then,
in the natural scale of Section~\ref{sect scale}, (and ignoring
factors of $\sqrt{2}$ related to the 45 degree rotation, and of 2 for
$\pm d$), the rectangle is $d_1 = d/\sqrt{N}$ by $w_1= \ t/\sqrt{N}$.
Condition~\eqref{need 1} can be interpreted as meaning that the (natural scale) area, $d_1w_1$,
tends to infinity --- so that with high probability, matches can be
found in this rectangle, and condition~\eqref{need 2} can similarly be
interpreted as meaning that $d_1 \to
0$, so that no matches will be found in the two-dimensional set, of
area on the order of $d_1^2$, of points within $\ell_\infty$ distance
$d_1$ of the chosen location $(i,j)$.

Now in choosing $m$ toggle points, displacements caused by earlier
toggles might change the search result, and we wish to make this
unlikely.
In more detail: as seen in Section~\ref{sect displace},
toggling
a logic $f$ at a vertex $\vs$ which appears on two different colors,
at times $i,j$ with $|i-j| \le d$ \emph{causes} displacements in the
time
indices of vertices occurring later on those segments, by amounts up to
$d$.
Our  $m$ potential toggle points,
$\vs_1,\ldots,\vs_m$, are controlled so that on any segment, $\vs_\ell$
is
preceded by toggle points from among the $\vs_1,\ldots,\vs_{\ell-1}$.
If the displacement caused by toggling at $\vs_i$ is at most $d_i$,
then in choosing $\vs_\ell$,  the accumulated 
displacements from previous toggles is  at most
$d_1+\cdots+d_{\ell-1}$.
By taking the $d_i$ in geometric progression, with large ratio $r^2$,
this accumulated displacement in the search for $\vs_\ell$ is at most
order of $d_{\ell-1}$.
The rectangle where we search for $\vs_\ell$ is thin and long, 
$d_\ell$ by $w_\ell =
r/d_\ell$;
the length of its boundary is order of $w_\ell$, so the area involved 
in points at a distance at most $d_{\ell-1} = d_\ell/r^2$ from the boundary is
order of $1/r =o(1)$.  Hence with high probability,
displaced indices have no effect.

\subsubsection{The Search Regions}

We divide the time interval $[0,t]$ into $m$ equal length pieces.
On the earliest piece, with times in [0,t/m], we demand that we can find a match $(i,j)$ with
$|i-j|$ very very very small, but no upper bound on $\max(i,j)$
other than $\max(i,j) < t/m$. In the \emph{natural scale} of Section
\ref{sect scale} we are searching for matches in a very
very very thin and very very long rectangle surrounding the diagonal
line $i=j$; this rectangle has a large area.  On the
second piece, with times in $[t/m,2t/m]$, 
we relax the notion of thin, expanding by a large
factor $r^2$, relax the notion of  long, dividing by the factor $r^2$,
thus keeping the area constant.
  We continue this
pair of geometric
progressions, so the $m$th region is a thin long rectangle --- but
still with the same area.   

Here is a concrete way to accomplish the above, together with $t^3n^3/N^2 \to 0$ and with $k$ fixed.  Let

$$
    t := m \, N^{.6}, a := N^{.1}, \text{ so that } 
t/m = a \sqrt{N}.
$$
The last condition should be understood as ``in the natural scale
from Section~\ref{sect scale}, the $t$ by $t$ rectangle is $ma$ by
$ma$, and length $t/m$ for the discrete $i$ and $j$  corresponds to
length $a$''.
Let 
$$
   r := a^{1/(2m+1)},   \text{ so that }  r^{2m+1} = a, 
$$
and, ignoring the factors of $\sqrt{2}$ involved in the 45 degree
rotation,
 take the thin long rectangles to have shapes
\begin{eqnarray}
  d_1 = \frac{1}{r^{2m}} & \text{ by } & w_1 = r^{2m+1} = (t/m)/\sqrt{N} \nonumber\\
  d_2 =   \frac{1}{r^{2m-2}} & \text{ by } & w_2 = r^{2m-1}  \label{geometric}  \\
   & \vdots & \nonumber  \\
d_m = \frac{1}{r^2} & \text{ by } & w_m = r^{3}  \nonumber 
\end{eqnarray}

Indexing by $\ell=1$ to $m$, the $\ell$th rectangle is $d_\ell :=
r^{2\ell-2m-2}$ by $w_\ell := r^{2m-2\ell+3}$ on the 
natural scale. 
Directly in terms of the discrete $i$ and $j$,
we define
\begin{eqnarray}
\region_\ell =\left\{ (i,j):  \frac{|i-j|}{\sqrt{N}} < r^{2\ell-2m-2} \right. 
  & \text{ and }  &  \label{def region}  \\
\left. \frac{(\ell-1)t}{m} \le \min(i,j) \le
\max(i,j) \le \frac{(\ell-1)t}{m} + \frac{t/m}{r^{2(\ell-1)}} 
\right\},  & &   \nonumber 
\end{eqnarray} 
so one checks that  1) as $\ell$ increases by 1, the thinness
constraint
relaxes by a factor of $r^2$, while the width constraint becomes more
severe by a
factor of $r^2$, so the area stays constant,  2) the first region, with $\ell=1$, allows $i,j \in
[0,t/m]$, and 3) the last region, with $\ell=m$, has $|i-j|/\sqrt{N}
\le 1/r^2 = o(1)$ as $n \to \infty$. 

Consider the possibility discussed in Section~\ref{sect earliest},
 where a toggle at a vertex appearing in
two differently colored segment enables a match within a single 
segment to become, after the toggle, an earlier match between two
different segments.  For each $\ell=1$ to $m$, with the $(t,d)$ in~\eqref{need 3} given by
$t=w_\ell \sqrt{N}, d= d_\ell \sqrt{N}$, the condition in
\eqref{need 3} is indeed satisfied by our specific choice in~\eqref{geometric}.
On the natural scale, and ignoring rotation, we are searching for a
match
in a $\delta = d_\ell$ by $W=w_\ell$ rectangle, thin and long, with
$\delta \to 0$ and area $\delta W \to \infty$.  The condition 
\eqref{need 3}, on the natural scale, means that $\delta^3 W \to 0$.   It
implies that, with high probability, we do not find a match between
two differently colored segments (at
$(i,j)$ in the rectangle, with $|i-j|/\sqrt{N}<\delta$,) and
simultaneously a nearby match within a single segment.  Here, nearby
means
with both indices within distance $\delta \sqrt{N}$ from $i$ or $j$.
Now, the $\delta$ by $W$ rectangle can be covered by $W/\delta$
squares, each square of size $4 \delta$ by $4 \delta$, and with each
successive square being a translate, by $\delta$, of the previous
square.
Ignoring 
constant 
factors,\footnote{such as ${k \choose 2}+k$ --- for the intensity of
arrivals
in the superimposed process marking matches between two different
colors or both within the same color, and 16 --- 
since a $4 \delta$ by $4 \delta$ square has area  $16 \delta^2$}
 the expected number of arrivals in one square is order
of $\delta^2$, and
the chance of two or more arrivals in that one square is order of
$\delta^4.$
Thus the expected number of squares with two or more arrivals is order
of 
\begin{equation}\label{overlapping squares}
W/\delta \times \delta^4 \ = \delta^3 W \ \ \ \to 0.
\end{equation}
 
\subsection{Definition of the Choice Functions}

Write $V = \mathbb{F}_2^{n-1}$ for the set of vertices in $D_{n-1}$,
and write ``null'' for a special value, not in $V$, used to encode
``undefined''.  Recall that we write $\be = (e_1,\ldots,e_k)$ for the
starting $n$-tuples for $k$ segments, and $S_{n,k} = \{(f,\be) \}$ for
the space in which we make a uniform choice of logic and starting
edges.  Also recall our notation~\eqref{segment vertex} for vertices
along
the $k$ segments.
Note that we have both $k$
segments and $k$ colors;  these are different concepts, and
ultimately, \emph{colors} will be labeled according to the segment
labels under $f$  --- but on the
soon to be defined ``happy'' event $H$, finding $\vs_i$
 on \emph{two different} segments of $f$ will be equivalent to finding $\vs_i$
on  \emph{two different} colors. 
To keep track of the colors, let
\begin{equation}\label{def alpha}
  \cA := \{\alpha = (a,b): 1 \le a < b  \le k  \}
\end{equation}

For $\ell=1$ to $m$, we define
\begin{equation}\label{def candidates}
\candidates_\ell:   S_{n,k} \to [0,t]^2  
 \times \cA  
\end{equation}
$$
 \candidates_\ell(f,\be) = \{ (i,j,a,b):  (i,j) \in \region_\ell \text{
   and }  v_{a,i} = v_{b,j} \}
$$
where  $\region_\ell$ is defined by~\eqref{def region}.

For $\ell=1$ to $m$, we define
\begin{equation}\label{def choice}
\choice_\ell:   S_{n,k} \to V \cup \{ \nil \},
\ \ \choice_\ell(f,\be) = \vs_\ell  \text{ or else } \nil
\end{equation}
where the value is $\nil$ if the set of candidates is empty, and
otherwise,
picking the first $(i,j,a,b)$ in  $\candidates_\ell(f,\be)$,
$\vs_\ell$ is the vertex with   $\vs_\ell =v_{a,i} = v_{b,j}$.
To be very careful, the order for \emph{first} is the lex-first order
on
$(i+j, \max(i,j), a,b)$.

\subsection{The Happy Event $H = H(k,m,n)$}
\label{sect happy event}

We now describe a subset of $S_{n,k}$, and refer to this subset as the
\emph{happy} event~$H$.  One requirement for $(f,\be) \in H$ is that,
for
 $\ell=1$ to $m$, each of the values $\choice_\ell(f,\be) \ne \nil$.
Starting with such an $(f,\be)$, the choice functions pick out a set
of $m$ distinct vertices;  call them  $\vs_1,\ldots, \vs_m$, and name
the set, $\vtwiddle = \{ \vs_1,\ldots, \vs_m \}$ --- we will use this
notation in~\eqref{def H} below.

Given a set of vertices, $U \subset V$, we denote \emph{the logic
  $f$ toggled at the vertices in $U$} as $ \toggle(f,U)$,
defined by
\begin{equation}\label{def toggle}
  \toggle(f,U) := \fs,  \text{ where } \fs(v) = 
   \left\{   \begin{array}{ll}
                 1-f(v)  & \mbox{if $v \in U$} \\
                 f(v) & \mbox{if $v \in V \setminus U$}
             \end{array}
    \right.   .
\end{equation}

We define $H$ as follows:
\begin{equation}\label{def H}
H = \{ (f,\be):  \forall \ U \subset
\vtwiddle, \text{ with } \fs = \toggle(f,U), \vs_\ell = \choice_\ell(\fs,\be)
\end{equation}
$$
 \text{ \emph{and} the segments } \seg(f,e_i,t) \text{ collectively have }
k(t+1) \text{ distinct edges} \}.
$$
Informally, $(f,\be)$ is in the happy event iff 
the $k$ segments involve no $n$-repeats, and the choice recipes     
find $m$ potential toggle vertices, and all $2^m$ \emph{cousins}
$\fs$, 
formed by
toggling at a subset of those vertices, give rise to the 
same $\vs_1,\ldots,\vs_m$.

The definition above creates an equivalence relation on $H$, in which
all classes have size $2^m$, and all $(\fs,\be) \in [(f,\be)]$ share
the same sequence $\vs_1,\ldots,\vs_m$.   Using the calculations given
in Section~\ref{sect motivate regions} one may show that
for fixed $k,m$,
$|H|/|S_{n,k}| \to 1$; that it, that $\p(H) \to 1$ as $n \to \infty$.


\subsection{Definition and Likelihood of 
an $\varepsilon$-good Schedule}\label{sect schedule}

Given $k$, view $\cA$, defined by~\eqref{def alpha}
as an \emph{alphabet} of size
$$   
   K := {k \choose 2}.
$$

A schedule of length $m$ is a word $\alpha_1 \alpha_2 \cdots \alpha_m
\in \cA^m$.   Given a schedule of length $m$, and $m$ coin tosses
$D_1,\ldots,D_m$,
for $i=1$ to $m$ define permutations in $\cS_k$ by
$$
   \tau_i =  \left\{   \begin{array}{ll}
             \text{the transposition } (a b)  & \mbox{if
               $\alpha_i=(a,b)$ and $D_i=\text{ heads }$} \\
                 \text{ the identity } & \mbox{if $D_i=\text{ tails }$}
             \end{array}
    \right. ,   
$$
and let $\tau = \tau(\alpha_1 \alpha_2 \cdots \alpha_m,D_1,\ldots,D_m)$
be the product, with $\tau_1$ applied first,
\begin{equation}\label{on schedule}
   \tau = \tau_m \circ \cdots \circ \tau_2 \circ \tau_1 \in \cS_k.
\end{equation}

Write $\sigma$ for an arbitrary permutation in $\cS_k$, and let
$$
  p_\sigma = p_\sigma(\alpha_1 \alpha_2 \cdots \alpha_m) = \p(\tau =
  \sigma | \alpha_1 \alpha_2 \cdots \alpha_m )
$$
be the conditional probability of getting $\sigma$ for the value of
$\tau$,
given the schedule $\alpha_1 \alpha_2 \cdots \alpha_m$ --- these are values of the form $z/2^m$ with $z$ in $Z$.   The total variation distance to the uniform
distribution on $\cS_k$
is
$$
 \distance(\alpha_1 \alpha_2 \cdots \alpha_m)=\dtv(\tau,\text{uniform})
= \frac{1}{2} \sum_\sigma \left| p_\sigma - \frac{1}{k!} \right|.
$$
Given $\varepsilon>0$, a schedule  $\alpha_1 \alpha_2 \cdots \alpha_m$
is
$\varepsilon$-good if $  \distance(\alpha_1 \alpha_2 \cdots \alpha_m)
< \varepsilon$. 

\begin{lemma}\label{schedule lemma}
Given $k$, and $\varepsilon >0$, there exists $m$ such that, for a
random schedule of length $m$, with all ${k \choose 2}^m$ equally likely,
\begin{equation}\label{shuffle goal}
\p( \alpha_1 \alpha_2 \cdots \alpha_m \text{ is } \varepsilon
\text{-good})
    > 1-\varepsilon.
\end{equation}
\end{lemma}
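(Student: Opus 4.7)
The plan is to view the process conditional on a fixed schedule as an inhomogeneous lazy random walk on $\cS_k$, and to exploit the spectral gap of its homogeneous average, which is precisely the lazy random transposition walk. The key structural fact is that each one-step operator is an \emph{orthogonal projection}, so the spectral gap of the average can be converted to contraction of the expected squared $L^2$ norm. For each transposition $\alpha \in \cA$, let $K_\alpha$ act on functions $g:\cS_k \to \mathbb{C}$ by $(K_\alpha g)(\sigma) = \tfrac{1}{2}[g(\sigma) + g(\alpha\sigma)]$, the transition operator for ``multiply by $\alpha$ with probability $1/2$, otherwise stay.'' With respect to the counting inner product, $K_\alpha$ is self-adjoint, and $\alpha^2 = e$ forces $K_\alpha^2 = K_\alpha$, so $K_\alpha$ is an orthogonal projection preserving the mean-zero subspace $\mathcal{H}_0$.

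Writing $\pi_m$ for the conditional distribution of $\tau$ given the schedule and $U$ for the uniform measure on $\cS_k$, one has $\pi_m = K_{\alpha_m}\cdots K_{\alpha_1}\delta_e$, so $f_m := \pi_m - U = K_{\alpha_m}\cdots K_{\alpha_1}f_0$ with $f_0 := \delta_e - U \in \mathcal{H}_0$. Averaging over a uniform $\alpha \in \cA$ gives the homogeneous operator $\bar K := \E K_\alpha$, which is exactly the transition kernel of the lazy random transposition walk on $\cS_k$. Its eigenvalues are the Fourier values $\tfrac{1}{2}(1+\chi_\rho((12))/\dim\rho)$ over irreducible representations $\rho$, and the largest non-trivial one (attained on the standard rep) equals $1 - 1/(k-1)$; in particular $\bar K \preceq (1-\gamma)I$ on $\mathcal{H}_0$ with $\gamma = 1/(k-1) > 0$.

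Combining the two inputs: for any $g \in \mathcal{H}_0$, $\E_\alpha\|K_\alpha g\|_2^2 = \E_\alpha\langle g, K_\alpha g\rangle = \langle g, \bar K g\rangle \le (1-\gamma)\|g\|_2^2$, where the first equality uses that $K_\alpha$ is a self-adjoint projection. Applied inductively with the filtration generated by $\alpha_1,\ldots,\alpha_{m-1}$, this yields $\E\|f_m\|_2^2 \le (1-\gamma)^m \|f_0\|_2^2$. Cauchy-Schwarz on the counting inner product gives $2\,\distance(\alpha_1\cdots\alpha_m) = \|f_m\|_1 \le \sqrt{k!}\,\|f_m\|_2$, so Markov's inequality produces
\[
\p(\distance(\alpha_1\cdots\alpha_m) > \varepsilon) \;\le\; \frac{k!\,(1-\gamma)^m\,\|f_0\|_2^2}{4\varepsilon^2},
\]
which falls below $\varepsilon$ once $m$ is chosen large enough in terms of $k$ and $\varepsilon$, proving \eqref{shuffle goal}.

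The main obstacle is choosing the right quantity to iterate. The trivial bound $\|K_\alpha g\|_2 \le \|g\|_2$ gives no contraction pointwise in $\alpha$, while the TV convergence of the \emph{unconditional} $\tau$ to uniform lower-bounds $\E[\distance]$ the wrong way (by Jensen applied to the convex TV distance). The projection identity $\|K_\alpha g\|_2^2 = \langle g, K_\alpha g\rangle$, valid precisely because $\alpha$ is an involution, is the bridge: it converts the averaged spectral gap of $\bar K$ into an exponentially decaying upper bound on $\E\|f_m\|_2^2$, which suffices by Markov.
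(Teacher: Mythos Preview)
Your proof is correct and takes a genuinely different route from the paper's. The paper argues combinatorially: it fixes the specific word $w=(1\,2)(1\,3)(2\,3)\cdots(k{-}1\,k)$ of length $K=\binom{k}{2}$, observes via the Fisher--Yates bijection that any occurrence of $w$ inside the schedule contributes a contraction factor $(1-2^{-K})$ to the TV distance, and then notes that a long random word over $\cA$ contains many disjoint copies of $w$ with high probability (the expected count being of order $mK^{-K}$). Your argument is spectral: you exploit that each $K_\alpha$ is a self-adjoint \emph{projection} so that $\E_\alpha\|K_\alpha g\|_2^2=\langle g,\bar K g\rangle$, and then invoke the Diaconis--Shahshahani spectral gap $\gamma=1/(k-1)$ for the lazy random-transposition walk $\bar K$ to get geometric decay of $\E\|f_m\|_2^2$, finishing with Cauchy--Schwarz and Markov. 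The paper's proof is more elementary (no representation theory), but your approach yields a vastly sharper $m$: roughly $m\asymp k\log(k!/\varepsilon^3)$ versus the paper's $m$ of order $K^{K}\,2^{K}\log(1/\varepsilon)$. The projection identity is the crucial step that makes the spectral gap of the \emph{averaged} chain control the \emph{schedule-wise} distance; this is the insight the paper's more hands-on argument bypasses.
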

\begin{proof}
There is a well-known bijection  
between $\cS_k$ and the set
$C_k := [1] \times [2] \times \cdots \times [k]$:  given 
$c=(c_1,c_2,\ldots,c_k)$ with $1 \le c_i \le i$, take
\begin{equation}\label{shuffle}
  \sigma = (2 \ c_2) \circ \cdots \circ (k-1 \ c_{k-1}) \circ (k \ c_k),
\end{equation}
where $(a \ b)$ denotes the transposition $(a \ b) \in \cS_k$ if $a \ne b$,
and the identity map otherwise.  (The corresponding algorithm, to
generate uniformly distributed random permutations, is known as the
``Fisher-Yates shuffle'' or ``Knuth shuffle''.)

Now consider the particular word $w$ of length $K$ over the alphabet $\cA$
defined in~\eqref{def alpha}, given by 
$$
   w= (1 \ 2) (1 \ 3) (2 \ 3) \cdots (k-2 \ k) (k-1 \ k).
$$
If we had $m=K$ and the schedule is $\alpha_1 \alpha_2 \cdots
\alpha_m=w$,
then $\distance(\alpha_1 \alpha_2 \cdots \alpha_m) \le 1-2^{-K}$,
because for each $\sigma$ in~\eqref{shuffle}, one
assignment of the coin values $(D_1,\ldots,D_m)$ 
yields $\tau=\sigma$, via the coins for the genuine transpositions
among the $(i \ c_i)$ on the right side of~\eqref{shuffle} being
heads, and all others coins being tails.  
When the word $w$ appears  
$\ell$ times inside a long
word $\alpha_1 \alpha_2 \cdots \alpha_m$,
we have, using a standard result,
$$
\distance(\alpha_1 \alpha_2 \cdots \alpha_m) \le \left(
1-2^{-K}\right)^\ell.
$$
For historical interest, we note that similar results are in \cite[Thm. 1, p. 23]{DiaconisGroup}; see also \cite{Diaconis}.
In a very long random
word $\alpha_1 \alpha_2 \cdots \alpha_m$, the number of occurrences of
$w$ is random, with mean and variance roughly $m \ K^{-K}$,  so a
sufficiently large $m$ guarantees that $\ell$ is sufficiently large,
with high probability.
\end{proof}

\subsection{Relativized Permutations}
\label{sect tangle}
We will define ``$\pi_f$
relativized to $e_1,\ldots,e_k$'' to be a specific permutation in $\cS_1 \cup
\cdots \cS_{k-1} \cup \cS_k$, where $\cS_j$ denotes the set of all
permutations on $\{1,2,\ldots,j\}$.  
For use in
 Lemma \ref{permutation lemma}, we need to allow for the possibility
 that $e_1,\ldots,e_k$ are not $k$ distinct $n$-tuples.

\begin{definition}\label{relativisation}
Let  $\pi$ be a permutation on a finite set $S$, and 
let $\be = (e_1,\ldots,e_k) \in S^k$.   
 In case $e_1,\ldots,e_k$ are all
distinct,
write the full cycle notation for $\pi$, erase all symbols not in
$\{e_1,\ldots,e_k\}$, and then relabel $e_1,\ldots,e_k$ as
$1,\ldots,k$. This yields the cycle notation for a permutation 
$\sigma = \sigma(\pi,\be) \in \cS_k$, and we call $\sigma$ ``$\pi$
relativized to $\be$''.  In case $j := | \{e_1,\ldots,e_k\}| < k$, edit
the list $(e_1,\ldots,e_k)$ by deleting repeats, from left to right,
 to get a new list
$\be' =(e_1',\ldots,e_j') \in S^j$, with no repeats.  Now we take   ``$\pi$
relativized to $\be$'' to be $ \sigma(\pi,\be') \in \cS_j$.
\end{definition}

On the happy event $H$ from~\eqref{def H}, consider an equivalence
class $[(f,\be)]$.  We want to name a canonical choice of class
leader, and since all $2^m$ elements $(\fs,\be)$ in the class share the same
$\vs_1,\ldots,\vs_m$, and differ only in the values of the $\fs$ at
those vertices, the natural choice of leader is $(f_0,\be)$ where
$$ 
   f_0(\vs_1) = \cdots = f_0(\vs_m) = 0.
$$
Finally, we can say what \emph{colors} are: for $a=1$ to $k$,
 the vertices along 
$\seg(f_0,e_a,t)$ have color $a$.
Among the various $(\fs,\be)$ in the equivalence class $[(f_0,\be)]$,
except for the case $\fs=f_0$, at least some of the $k$ segments
start with one color and end with another.

The schedule corresponding to the equivalence class $[(f,\be)]$
is the word $\alpha_1 \alpha_2 \cdots \alpha_m$ where $\alpha_i =
(a_i,b_i)$ where $1 \le a_i < b_i \le k$ and $\vs_i$ appears on colors
$a_i$ and $b_i$, that is, $\vs_i$ is a vertex of both
$\seg(f_0,e_{a_i},t)$ and $\seg(f_0,e_{b_i},t)$.  We visualize\footnote{This is a
 only a visualization, and not a technical definition.  Imagine $k$
 strands of (directed) yarn, of different colors.  They are all 
tangled up, but the  start and end of each strand protrude from the
tangle, 
so one
 has $2k$ protruding ends (one male, one female, in each color).
One only knows that inside the tangle, there are $m$ instances of two
 different colored yarns being cut, and at each of these $m$, both
 strands may be spliced back together in their original (no color
 change)
form, or else they may be cross-joined.}
$f(\vs_i)=1$ as meaning that the strands of colors $a_i$ and $b_i$ are
cut (at $\vs_i$) and glued together to create a color jump, as in
Figures~\vref{fig 8} and \vref{fig 9}.

For $a=1$ to $k$, write $\ee_a := $ the final edge $e_{a,t}$ of $\seg(f_0,e_a,t)$, so
that, under the logic $f_0$, $\seg(f_0,e_a,t)$ is a directed path (in color $a$) from its \emph{female} end $e_a$ \emph{to} its \emph{male}
end $\ee_a$.  Note that being in $H$ implies that the starting edges 
$e_1,\ldots,e_k$ are distinct, and the final edges $\ee_1,\ldots,\ee_k$ are distinct.

It is clear --- from the relative timing of the appearances of the
$\vs_1,\ldots,\vs_m$ along the segments $\seg(f_0,e_a,t)$ --- that
under the logic $\fs$, $\seg(\fs,e_a,t)$ is a directed
  path from its female end $e_a$ \emph{to} its male
end   $e_{g(a)}'$,  
 where $g \equiv g(\fs)$ is the permutation in $\cS_k$ given by
\begin{equation}\label{on schedule g}
   g  = \tau_m \circ \cdots \circ \tau_2 \circ \tau_1 \in \cS_k.
\end{equation}
$$
   \tau_i =  \left\{   \begin{array}{ll}
             \text{the transposition } (a b)  & \mbox{if
               $\alpha_i=(a,b)$ and $\fs(\vs_i)=1$ } \\
                 \text{ the identity } & \mbox{if $\fs(\vs_i)=0   $}
             \end{array}
    \right. ,   
$$
compare with~\eqref{on schedule}.

Take the usual notation from  Hall-style matching theory,
and abbreviate the female ends as $\{1,2,\ldots,k\}$ and the male ends
as $\{1',2',\ldots,k' \}$.  Then $f_0$ induces the matching  from
$\{1,2,\ldots,k\}$ to $\{1',2',\ldots,k' \}$ with $a \mapsto a'$.
Now the $k$ paths under $f_0$ starting from
the  male ends
$\{1',2',\ldots,k' \}$ must eventually arrive at 
female ends $\{1,2,\ldots,k\}$.   Define the \emph{return matching}
$\gh$ by $\gh(a')=b$ if the path starting from the male end $a'$ first
arrives at the female end $b$.   This return matching $\gh$ is the same under
all
logics $\fs$ with $(\fs,\be) \in [(f_0,\be)]$.

Finally, for $(f,\be) \in H$, 
\begin{equation}\label{matching claim}
\pi_f \text{ relativized to } \{e_1,\ldots,e_k\} = \gh \circ g,   
\end{equation}
and of course, on each toggle class
$$
\dtv( \gh \circ g, \text{uniform}(\cS_k)) =   \dtv( g, \text{uniform}(\cS_k)).
$$
With hindsight, we observe that
the estimates of this section, and the previous Section 
 \ref{sect schedule}, 
have enabled us to dodge a very difficult consideration of
\emph{interlacement}
(of the $e_1,\ldots,e_k$ and $\vs_1,\ldots,\vs_1$);  see \cite{ABS}
for a study of interlacement.

\section{Sampling with $k$ Starts, to Prove Poisson-Dirichlet Convergence}
\label{sect sampling}

\subsection{Background, and Notation, for Flat Random Permutations}
\label{sect background}

An overall reference for the following material and history is  \cite{ABTbook}.
For a random permutation in $\cS_k$, with all $k!$ possible permutations equally
likely, for j=1,2,\dots, let
$$
  L_j  \equiv L_j(k) := \text{size of }j\text{-th longest cycle}
$$
with $L_j = 0$ if the permutation has fewer than $j$ cycles, so that always
$L_1(k)+L_2(k)+\cdots = k$. The notation $L_j  \equiv L_j(k)$  means that we consider the two notations equivalent, so that we can
use either, depending on whether or not we wish to emphasize the
parameter $k$. Write
\begin{equation}\label{def L}
   \bL \equiv \bL(k) := (L_1(k),L_2(k),\cdots), \ 
   \bbL \equiv \bbL(k) := \frac{\bL(k)}{k}, 
\end{equation}
so that $\barL_i \equiv \barL_i(k) := L_i/k$.
We use notation analogous to the above, systematically:  boldface gives a process, and overline specifies
normalizing,
so that the sum of the components is 1.

This 
paragraph, summarizing the convoluted 
history of the limit distribution for the length of the longest
cycle, begins with
Dickman's 1930 study of the largest prime factor of a random integer.
Dickman proved that for each fixed $u \ge 1$,
 $\Psi(x,x^{1/u})/x \to \rho(u)$, where $\Psi(x,y)$
counts
the $y$-smooth integers from 1 to $x$. The function  
$\rho$ is 
characterized by $\rho(u)=0$ for $u<0$, $\rho(u)=1$ for $0 \le u \le
1$,
and for all $u$, $u \rho(u)=\int_{u-1}^u \rho(t) \ dt$.  
In modern
language, writing $P^+ = P^+(x)$ for the largest prime factor of an
random integer chosen from 1 to $\lfloor x \rfloor$, 
Dickman's result is that
\begin{equation}\label{prime limit}
  \frac{\log P^+}{\log x} \to^d  X_1, \text{ where } \p(X_1 \le 1/u) = \rho(u) \text{
  for } u \ge 1.
\end{equation}
Later work by Goncharov (1944)  and Shepp and Lloyd (1966) showed
the corresponding result for random permutations, that
for every fixed $u \ge 1$, 
 $\p(L_1(k) < k/u) \to \rho(u)$. In modern language this is
\begin{equation}\label{largest limit}
L_1(k)/k \to^d X_1, \text{ where } \p(X_1 \le 1/u) = \rho(u) \text{
  for } u \ge 1.
\end{equation}
The random variable $X_1$ appearing in \eqref{prime limit}
and \eqref{largest limit} \emph{is the first coordinate} of the
Poisson-Dirichlet process;  the second coordinate corresponds to the
second largest prime factor, or second largest cycle length, and so
on.
For primes, the joint limit was proved by 
Billingsley (1972) \cite{billingsley72}, 
and for permutations, the joint limit was discussed by 
Vershik and Shmidt (1977) and Kingman (1977).
In these early studies, the Poisson-Dirichlet process appears as the
limit, but not in a form easily recognizable as either 
\eqref{PD density} or \eqref{def PD}.  A fun exercise for the reader
would be to prove that the distribution of $X_1$, as given by 
the cumulative distribution function in \eqref{prime limit}, 
together with the integral equation characterizing $\rho$,
is the same as the distribution of $X_1$ as given by its density,
which is the special case $k=1$ of \eqref{PD density}.
See \cite{budalect} for more on the Poisson-Dirichlet in relation to prime
factorizations, and \cite{ThreeCouplings} for more on the
Poisson-Dirichlet in relation to flat random permutations.

Returning to the process of  longest cycle lengths in 
\eqref{def L}, the joint distribution
 is most
easily understood by taking the cycles in ``age order".
Let
\begin{equation}\label{def A}
   A_j \equiv A_j(k) := \text{size of }j\text{-th eldest cycle}.
\end{equation}
Our notation convention has already told the reader that
$ \bA \equiv \bA(k) := (A_1(k),A_2(k),\cdots)$, and that $\bbA(k)=\bA(k)/k$.
    Here, the notion of age comes from canonical cycle notation: 1 is
written as the start of the first (eldest) cycle, whose length is
$A_1$, then the smallest $i$ not on this first cycle is the start of
the second cycle, whose length is $A_2$, and so on --- with $A_j := 0$
if the permutation has fewer than $j$ cycles.\footnote{In contrast 
with permutations on  $\{1,2,\ldots,N\}$,
similar to~\eqref{def A}, where age order comes from the
\emph{canonical} cycle notation, 
for shift-register permutations $\pi_f$, the oldest cycle is  \emph{not} the
cycle containing the lex-first $n$-tuple, $00\cdots0$. In fact, in a
random FSR, the cycle starting from $00\cdots0$ has exactly a
one-half chance to have length 1.  For permutations of a set lacking
exchangeability,  such as $\mathbb{F}_2^n$, the notion of \emph{age
order} requires \emph{auxiliary randomization}: the oldest cycle is
picked out by a \emph{random} $n$ tuple; conditional on this cycle,
with length $A_1<N$, choose an $n$ tuple uniformly at \emph{random}
from the remaining $(N-A_1)$ $n$-tuples not on the first cycle, to pick
out the second oldest cycle, whose length is $A_2$, and so on.}
It is easy to see that
$A_1$ is uniformly distributed in $\{1,2,\ldots,k\}$, and for each
$j=1,2,\ldots$, if there are at least $j$ cycles, then
$$  
      A_j(k)\text{ is uniformly distributed in }
\{1,2,\ldots,k-(A_1+\cdots+A_{j-1})\}.
$$
This very easily leads to a description of the limit 
proportions:  with $U,U_1,U_2,\ldots$ independent, uniformly distributed in (0,1),
\begin{equation}\label{to dist age ordered}
   \bbA := \frac{\bA(k)}{k} \to^d ( (1-U_1), U_1(1-U_2), U_1 U_2(1-U_3),\ldots).
\end{equation}
We write $\to^d$ to denote convergence in distribution, and we 
note that $U =^d 1-U$, where $=^d$ denotes equality in
distribution.
The  distribution of the process on the right side of 
\eqref{to dist age ordered} is named GEM, after
Griffiths~\cite{griffiths}, 
Engen~\cite{engen}, and McCloskey~\cite{mccloskey};
its construction is popularly referred to as ``stick breaking" although
stick breaking in general allows $U$ to take any distribution on
(0,1),
not just the uniform.

Convergence of processes, 
such as \eqref{to dist age ordered} and \eqref{to dist size ordered}, and our 
Theorem \ref{theorem 1} and Lemmas \ref{partition lemma} and \ref{permutation lemma},
are instances of convergence for stochastic processes with values in $\mathbb{R}^\infty$, with the usual compact-open topology, and as such, convergence of processes is equivalent to convergence to the finite-dimensional-distributions, of the first $r$ coordinates, for  each $r=1,2,\ldots$.

  Define
$$
\Delta = \{(x_1,x_2,\dots) \in [0,1]^\infty: x_1+x_2+\cdots =1\}.
$$
The (usual subspace) topology on $\Delta$ is the same as the metric topology from the $\ell_1$ distance, 
\begin{equation}
    d(  (x_1,x_2,\dots), (y_1,y_2,\dots) ) = \sum | x_i - y_i|,
\label{def ell 1}
\end{equation}
We write $\rank$ for the function on $\Delta$ which sorts, 
with largest first.  An example 
shows some of the subtlety of the preceeding considerations: let $\be_i \in \Delta$ be the $i^{th}$ standard basis vector --- all zeros apart from a 1 in the $i^{th}$ coordinate, and let $\bf{0}$ be the all zeros vector.  Note that ${\bf 0} \in [0,1]^\infty \setminus \Delta$, and in the larger space $[0,1]^\infty$, $\be_n \to \bf{0}$. But for $i \ne j, d(\be_i,\be_j)=1$, and the sequence $\be_1,\be_2,\ldots$ does not converge in $\Delta$.  The closure of $\Delta$ is the compact set
$\overline{\Delta} = \{(x_1,x_2,\dots) \in [0,1]^\infty: x_1+x_2+\cdots \le1\}$, and $\rank$ is also 
defined\footnote{$\rank$ is  \emph{not} defined on $[0,1]^\infty$  --- for example $\bx = (1/2,2/3,3/4,\ldots )$ does not have a largest coordinate.}
  on $\overline{\Delta}$; note that ${\bf 0} \in \overline{\Delta}$, and our $\be_n$ example shows that $\rank$ is \emph{not}
  continuous on $\overline{\Delta}$.
Donnelly and Joyce, \cite[Proposition 4]{Donnelly}, proved 
that $\rank$ 
is continuous on $\Delta$, 
observing that ``\dots in parts of the literature some of these results seem
already to have been assumed."   

By definition, a random $(X_1,X_2,\ldots) \in \Delta$ \emph{is} the
Poisson-Dirichlet process, or \emph{has} the Poisson-Dirichlet 
distribution\footnote{This PD is PD(1); mathematical geneticists work with a family of distributions, PD($\theta$), indexed by $\theta \in (0,\infty)$.},  PD, if for each $k=1,2,\ldots$, the
joint density of the first $k$ coordinates is given by
\begin{equation}\label{PD density} 
  f_k(x_1,x_2,\ldots,x_k) = \frac{1}{x_1x_2 \cdots x_k} \  \rho\left(
\frac{1-x_1-\cdots -x_k}{x_k} \right)
\end{equation}
on the region $x_1 > x_2 >\cdots > x_k > 0$ and $x_1+\cdots+x_k < 1$,
and zero elsewhere.
The Poisson-Dirchlet process may be constructed from the GEM process,
which appeared on the right side of 
\eqref{to dist age ordered}, by
sorting, with
\begin{equation}\label{def PD}
  (X_1,X_2,\ldots) =^d  \rank(( (1-U_1),U_1(1-U_2),U_1 U_2(1-U_3),\ldots) ).
\end{equation}   

For the process of largest cycle lengths in a random permutation, \eqref{def L},
the combination of the easy-to-see limit~\eqref{to dist age ordered}, and the continuity of \rank, and
the characterization~\eqref{def PD} of the Poisson-Dirichlet distribution, 
proves that as $k \to \infty$,
\begin{equation}
    \bbL(k) \to^d {\bf X} :=(X_1,X_2,\ldots),\text{ with PD distribution}.
\label{to dist size ordered}
\end{equation}

Our goal is to derive a new tool for proving the same PD convergence
as
in~\eqref{to dist size ordered}, but for non uniform permutations,
such as those arising from a random FSR.  It might benefit the reader
to jump ahead a little, and read the statement of Lemma 
\ref{permutation lemma}, 
and then 
the more technical
Lemma \ref{partition lemma},  
which has the meat of the argument used to prove  Lemma 
\ref{permutation lemma}.  We have stated Lemma \ref{partition lemma} in
a fairly general form, hoping that it may be useful in the context of other
combinatorial structures, and perhaps with limits other than the
Poisson-Dirichlet.

\subsection{The Partition Sampling  Lemma}

\begin{lemma}  \label{partition lemma}
First, suppose that for each $N$ along a sequence of $N$ tending to $\infty$
we have         a random set partition $\pi$ on 
$[N] :=\{1,2,\ldots,N\}$. Let $M_j \equiv M_j(N)$ 
be the size of the $j$-th
largest block of $\pi$, with $M_j :=0$ for $j$ greater than the number of blocks
of $\pi$,  so that    $M_1+M_2+\cdots = N$.  Let
$\bM(N)=(M_1(N),M_2(N),\ldots)$
and let $\bbM(N)=M(N)/N$.

Next, for each $k \ge 1$, take an ordered sample of size $k$, with replacement,
from $[N]$, with all $N^k$ possible outcomes equally likely.
Such a sample picks out an ordered 
$($by first appearance$)$
list of blocks of $\pi$, say
$\beta_1,\dots,\beta_r$, with $r \le k$. 
Let $C_j \equiv C_j(N,k)$ 
be the number of elements of 
the $k$-sample landing in the block $\beta_j$,  with $C_j := 0$ for 
$j>r$, so that $C_1+C_2+\cdots = k$. Let  $\bC \equiv \bC(N,k) =
(C_1,C_2,\ldots)$.

Finally, let $\bX = (X_1,X_2,\ldots)$ be \emph{any} random element of $\Delta$,
with $X_1 \ge X_2 \cdots \ge 0$, and let
$\bA(k) := (A_1(k),A_2(k),\cdots)$ be \emph{any} random elements
of $\mathbb{Z}_+^\infty$ for which $A_1(k)+A_2(k)+\cdots=k$, and such
that  $\bbA(k) := \bA(k)/k$ has
\begin{equation}\label{limit hypothesis}
  \text{ as } k \to \infty, \ \ \ \rank( \bbA(k)) \to^d \bX.
\end{equation}

Then, if for each fixed $k$,  as $N \to \infty$, we have
\begin{equation}  \label{k hypothesis} 
  \bC(N,k)  \to^d   \bA(k),
\end{equation}
it follows that  
$$
  \text{ as } N \to \infty, \ \ \  \bbM(N) \to^d \bX.
$$
\end{lemma}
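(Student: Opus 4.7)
The plan is to reduce to finite-dimensional distributional convergence, which is equivalent to convergence in $(\Delta, d)$ since $d$ induces the product topology. Fix $J \ge 1$ and a bounded Lipschitz test function $g : \mathbb{R}^J \to \mathbb{R}$ with constant $L$; the goal becomes
\[
   \E\, g(M_1/N,\ldots,M_J/N) \ \longrightarrow \ \E\, g(X_1,\ldots,X_J) \qquad \text{as } N \to \infty.
\]
Inserting two intermediate quantities --- the ranked sample statistic and its $N\to\infty$ limit --- the triangle inequality gives
\begin{align*}
 T_1 & := \bigl|\,\E\, g(M_1/N,\ldots,M_J/N) - \E\, g(\rank(\bC/k)_{1:J})\,\bigr|, \\
 T_2 & := \bigl|\,\E\, g(\rank(\bC/k)_{1:J}) - \E\, g(\rank(\bA(k)/k)_{1:J})\,\bigr|, \\
 T_3 & := \bigl|\,\E\, g(\rank(\bA(k)/k)_{1:J}) - \E\, g(X_1,\ldots,X_J)\,\bigr|.
\end{align*}
Hypothesis \eqref{limit hypothesis} and continuous mapping for the projection onto the first $J$ coordinates give $T_3 \to 0$ as $k\to\infty$. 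For each fixed $k$, hypothesis \eqref{k hypothesis} combined with the continuity of $\rank$ on $\Delta$ (Donnelly and Joyce, referenced in the background subsection) gives $T_2 \to 0$ as $N\to\infty$. So the whole proof hinges on showing $T_1 \to 0$ as $k\to\infty$, uniformly in $N$; one then closes the argument by fixing $\varepsilon$, choosing $k$ so that $T_1,T_3 < \varepsilon$ for all $N$, and then letting $N\to\infty$ so that $T_2 < \varepsilon$.

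To bound $T_1$, let $C_j^* \equiv C_j^*(N,k)$ count the $k$ samples landing in the $j$-th largest block of $\pi$, so that, conditional on $\pi$, $C_j^* \sim \mathrm{Binomial}(k,p_j)$ with $p_j := M_j/N$. Writing $\bC^* = (C_1^*, C_2^*, \ldots)$, one has $\rank(\bC/k) = \rank(\bC^*/k)$, and since sorting is $1$-Lipschitz in $\ell_\infty$ and $\bbM$ is already sorted,
\[
   \bigl\|\bbM(N) - \rank(\bC/k)\bigr\|_\infty \ \le \ \bigl\|\bbM(N) - \bC^*/k\bigr\|_\infty \ = \ \max_j|C_j^*/k - p_j|.
\]
Thus $T_1 \le L\cdot \E\,\max_j |C_j^*/k - p_j|$, and the task reduces to showing the uniform concentration
\[
   \lim_{k\to\infty}\, \sup_N\, \P\bigl( \max_j |C_j^*/k - p_j| > \delta \bigr) \ = \ 0 \qquad \text{for each } \delta>0.
\]

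This uniform-in-$N$ concentration is the main obstacle, because the number of blocks of $\pi$ may grow with $N$ and a naive union of Hoeffding bounds scales with that number. The plan is to split indices by a threshold $\eta = \eta(k)$ shrinking slowly with $k$, for instance $\eta = 1/k^3$. At most $1/\eta = k^3$ indices satisfy $p_j \ge \eta$, so Hoeffding plus a union bound yields
\[
   \P\bigl(\max_{p_j \ge \eta}|C_j^*/k - p_j| > \delta\bigr) \ \le \ 2\, k^3\, e^{-2\delta^2 k} \ \longrightarrow \ 0.
\]
For indices with $p_j < \eta$, use $|C_j^*/k - p_j| \le C_j^*/k + \eta$ combined with the second-moment union bound
\[
   \P\bigl(\exists\, j:\ p_j<\eta,\ C_j^*\ge 2\bigr) \ \le \ \binom{k}{2}\sum_{p_j<\eta}p_j^2 \ \le \ \binom{k}{2}\,\eta \ = \ O(1/k).
\]
On the complementary event, every small block satisfies $C_j^*\le 1$, so $\max_{p_j<\eta}|C_j^*/k - p_j| \le 1/k + \eta \to 0$. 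Combining the two bounds establishes the desired uniform concentration, closes the triangle estimate, and proves the lemma.
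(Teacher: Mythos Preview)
Your proof is correct and follows the same three–step comparison as the paper: $\bbM$ to $\rank(\bbC/k)$ via multinomial concentration, $\rank(\bbC/k)$ to $\rank(\bbA(k)/k)$ via~\eqref{k hypothesis}, and $\rank(\bbA(k)/k)$ to $\bX$ via~\eqref{limit hypothesis}. The organising framework differs---you work with bounded Lipschitz test functions and $\ell_\infty$ estimates, while the paper works with couplings and $\ell_1$ estimates---but these are cosmetic.

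The one substantive difference worth noting is the concentration step. Your Hoeffding-plus-threshold argument is correct but heavier than necessary. The paper instead uses a one-line Chebyshev trick: conditionally on $\bp$, for each $j$ one has $\P(|C_j^*/k - p_j| \ge \delta) \le \Var(C_j^*/k)/\delta^2 = p_j(1-p_j)/(k\delta^2) \le p_j/(k\delta^2)$, and summing over \emph{all} $j$ and using $\sum_j p_j = 1$ gives $\P\bigl(\max_j |C_j^*/k - p_j| \ge \delta\bigr) \le 1/(k\delta^2)$, uniformly in $\bp$ and hence in $N$. This replaces your threshold split, Hoeffding union bound, and second-moment count in a single stroke. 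On the other hand, your passage from the concentration to the conclusion is arguably cleaner: you go directly via $\ell_\infty$ and the fact that $\rank$ is an $\ell_\infty$-contraction, whereas the paper routes through an auxiliary event $R(j,\varepsilon)$ (some $j$ coordinates carry mass $>1-\varepsilon$) in order to pass to $\ell_1$, a detour that is not really needed once one has the uniform $\ell_\infty$ bound.
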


\begin{proof} 
\def\whp{{\rm whp \ }}
Here is an outline of our proof.  We begin with an analysis of ``sampling using $k$ probes", leading to 
\eqref{uniform all i}, which gets coordinatewise nearness, with exceptional probability  O($1/k)$,  \emph{uniformly} over set partitions, which are indexed by $N$.
This is the crux of our proof;  the remainder is similar to Donnelly and Joyce, \cite[Proposition 4]{Donnelly}, on the continuity of \rank.   For an overview, writing whp to mean ``with high probability", and $\doteq$ to mean ``approximately equals, in $\ell_1$":
$$  \bX \  (\text{by } \eqref{limit hypothesis})  \doteq \whp \rank(\bbA) \ 
(\text{by } \eqref{k hypothesis})  = \whp \rank(\bbC) = \rank(\bbD)  \doteq \whp \rank(\bbM) = \bbM.
$$

Write the blocks of $\pi$ as $b_1,b_2,\ldots$, listed in nonincreasing order 
of size,  so that $M_i = |b_i|$.  
Write $p_i := M_i/N$,  so that $\bp :=
(p_1,p_2,\ldots) \equiv \bbM$ 
is a random probability distribution on the positive integers.
Let $D_j$ be the number of elements of the $k$-sample in $b_j$;
 the lists $C_1,C_2,\ldots$ and $D_1,D_2,\ldots$ 
represent
  the same multiset, apart from rearrangement, so that
\begin{equation}
\label{same multiset}   
\rank(  (C_1,C_2,\ldots) ) =  \rank(  (D_1,D_2,\ldots) ). 
\end{equation}
Write  $\bD \equiv \bD(N,k) := (D_1,D_2,\ldots)$, and $\bbD 
 \equiv \bbD(N,k) := \bD/k$, so that
$\bbD=(\barD_1,\barD_2,\ldots)$
and $\barD_i = D_i/k$.

Conditional
on the value of $\bp$, the joint distribution of $(D_1,D_2,\ldots)$
is exactly Multinomial$(k,\bp)$.  
We want to establish a form of \emph{uniformity} for the convergence
of $\bbD(k)$ to $\bp$.
The first step is to recall the usual proof that for Binomial sampling, 
with a sample of size $k$ and true parameter $p \in [0,1]$, 
the sample mean $\hat{p}$
converges to the true parameter $p$ --- because the proof provides a
quantitative bound.  Specifically,  Chebyshev's inequality gets
used, with
\begin{eqnarray}
\p( |  \hat{p}-p | \ge \delta )
      &=& \p( (\hat{p}-p)^2 \ge \delta^2) \nonumber \\
          & \le  & \frac{ \e (\hat{p}-p)^2}{ \delta^2} \nonumber  \\ 
          &  =   &  \frac{\var \hat{p}}{\delta^2}\nonumber  \\ 
          &  =   &  \frac{p(1-p)}{k \delta^2}\nonumber  \\ 
          &  \le &  \frac{p}{k \delta^2 }. 
\label{uniform binomial}         
\end{eqnarray}
In particular, conditional on any value for $\bp$, for $i=1,2,\ldots$,  with $p_i = \barM_i = M_i(N)/N$ in the role of $p$ for \eqref{uniform binomial},
$$
    \p(|\barD_i-\barM_i| \ge \delta \, | \, (p_1,p_2,\ldots ))
 \le  \frac{p_i}{k \delta^2 }.
$$
Hence, taking expectation to remove the
conditioning on $\bp$, and then using $\sum_i p_i=1$ to analyze the union bound, we have a good event $G$ (proximity in $\ell_\infty$) whose complement
\begin{equation}\label{uniform all i}
    G^c :=  
  ( \exists i,  |\barD_i-\barM_i| \ge \delta ) \ \
\text{ has } \p(G^c)
  \le  \frac{1}{k \delta^2 }.
\end{equation}

For $\bx \in \Delta$, $j \ge 1$ write $S_j(\bx)$ for the sum of the $j$ \emph{largest} coordinates of $\bx$.  Obviously
\begin{equation}\label{sampling 2}
\text{ for } \omega \in G, \ \ S_j(\bbM) \ge S_j(\bbD) - j \, \delta.
\end{equation}

Let $\varepsilon > 0$ be given, and fixed for the remainder of this proof.

Let  
\begin{equation}\label{def R j} 
R(j,\varepsilon) := \{\by = (y_1,y_2,\ldots) \in \Delta: \rank(\by) =
\bx =(x_1,x_2,\dots) \text{ has } x_1+\cdots+x_j >
1-\varepsilon\},
\end{equation}
the set of points in $\Delta$ where \emph{some} set of $j$ coordinates
sums to more than $1-\varepsilon$.  Note that $R(j,\varepsilon)$ is
invariant under permutations of the coordinates, including $\rank$.
Since $\Delta = \cup_j R(j,\varepsilon)$, 
and $\bX$ from~\eqref{limit hypothesis} is a random element of
$\Delta$,
there exists $j=j(\varepsilon) \ge 1$, depending on the distribution of $\bX$, such that 
\begin{equation}\label{X R j}
\p(\bX \in R(j,\varepsilon))
> 1- \varepsilon; 
\end{equation}
 fix such a value for $j$.   [When used in Lemma \ref{permutation lemma}, where the distribution of $\bX$ is Poisson-Dirichlet, \eqref{def PD} can be used to show that the minimal such $j$ is asympotically $\log (1/\varepsilon)$.]

Using the hypothesis~\eqref{limit hypothesis},
and observing that $R(j,\varepsilon)$ is an open set, (the \emph{open set} part of the Portmanteau Theorem on weak convergence implies that) we can pick and fix a finite
$k_0$    
such that for all $k \ge k_0$,
\begin{equation}\label{A(k) R j}
   \p(\bbA(k) \in R(j,\varepsilon)) >  1- \varepsilon.
\end{equation}
Using the hypothesis~\eqref{limit hypothesis} again,
 we can pick and fix a finite  $k_1 \ge k_0$ such that for each $k \ge k_1$,
there exists a coupling (see Dudley \cite{Dudley}, Real Analysis and Probability,
Corollary 11.6.4) such that the $\ell_1$ distance has
\begin{equation}\label{eps coupling}
  \p( d( \rank( \bbA(k)), \bX ) \ge \varepsilon) < \varepsilon.
\end{equation}

Next, intending to use  \eqref{uniform all i} 
with $\varepsilon/j$ used in the role of $\delta$, the upper bound is 
$1/(k \delta^2) = j^2/(k \varepsilon^2)$.   To have this upper bound be at most $\varepsilon$, and also be able to apply \eqref{eps coupling}, we take $k$ to be the maximum of $k_1$ and the ceiling of $ j^2/ \varepsilon^3$.

The value $k$ has been fixed, in the previous paragraph. Now, 
the convergence in hypothesis~\eqref{k hypothesis} involves the topologically discrete space
$\mathbb{Z}_+^k$, so the distributional convergence can be metrized by the total variation
distance, hence there exists a finite $N_0(k)$ such that 
for all $N \ge N_0(k)$, the total variation distance between distributions is at most
$\varepsilon$, and there exists a coupling with
$$ 
  \p( \bC(N,k) \ne \bA(k) ) \le \varepsilon.
$$ 
Of course  this same coupling and exceptional event yields  
$ \p(\rank( \bbC) \ne \rank(\bbA) ) 
 \le\varepsilon$, and  using also \eqref{A(k) R j},

$$ 
  \p(\rank( \bbC) = \rank(\bbA)  \text{ and } \bbC(N,k) \in R(j,\varepsilon)) > 1-2 \varepsilon.
$$ 
But then \eqref{same multiset}, and the permutation invariance of $R(j,\varepsilon)$ converts the above into
\begin{equation}\label{D near rank A}
  \p( \rank(\bbD) = \rank(\bbA)   \text{ and  } \bbD(N,k) \in R(j,\varepsilon)) > 1-2 \varepsilon.
\end{equation}

Next, observe that $\bbD(N,k) \in R(j,\varepsilon)$ and $G$ from \eqref{uniform all i} with $\delta = \varepsilon/j$
imply that, each of the $j$ indices $i$ for $\bbD(N,k) \in R(j,\varepsilon)$ has $|M_i -  D_i| < \delta$, so the sum of those $j$ coordinates of $\bbM$ is at least $S_j(\bbD) - j \, \delta = S_j(\bbD) - \varepsilon > 1 - 2 \varepsilon$  (as observed in \eqref{sampling 2})),
and the sum of the other (outside the chosen $j$) coordinates of $\bbM$ is at most $2 \varepsilon$, while the sum of the other (outside the chosen $j$) coordinates of $\bbD$ is at most $\varepsilon$.  Hence, the $\ell_1$ distance is at most
$4 \varepsilon$, accounted for by $j \delta = \varepsilon$, from the $|M_i - D_i|$ with $i$ among the chosen $j$, plus $2 \varepsilon + \varepsilon$ using $ |M_i - D_i| \le M_i + D_i$ on the other coordinates, outside the chosen $j$.  This result was that $d(\bbM,\bbD) < 4 \varepsilon$.  Now $\bbM = \rank(\bbM)$ by construction, but due to sampling noise, maybe $\bbD \ne \rank(\bbD)$.  However,  since $\rank$ is a contraction, we have $d(\bbM,\rank(\bbD)) < 4 \varepsilon$.

Putting it all together, for any $N \ge N_0$, the union of the  exceptional events from \eqref{uniform all i} ( $\bbM$ near $\bbD$, coordinatewise, with $\p(G^c) \le \varepsilon$), from \eqref{eps coupling} ($\rank(\bbA)$ near $\bX$), and from \eqref{D near rank A}
($\bbD$ equals $\rank(\bbA)$, in $R(j,\varepsilon)$) has probability at most $4 \varepsilon$, and outside this exceptional event, $\bbM$ is at most
$4 \varepsilon$ away from $\rank(\bbD) = \rank(\bbA)$, which in turn is at most $\varepsilon$ away from $\bX$.
In summary, there are couplings so that
$$ \forall N \ge N_0,  \ \p( d(\bbM,\bX) > 5 \varepsilon) < 4 \varepsilon.   $$

\end{proof}

\subsection{The Permutation Version of the Sampling Lemma}
\label{sect perm version}

\begin{lemma}
\label{permutation lemma}
Suppose that for a sequence of $N$ tending to $\infty$ we have a random permutation $\pi$ on 
$[N] :=\{1,2,\ldots,N\}$. Let $M_j  \equiv M_j(N)$ 
be the size of the $j$-th
largest cycle of $\pi$, with $M_j :=0$ for $j$ greater than the number of cycles
of $\pi$,  so that $M_1+M_2+\cdots = N$.

Given $k \ge 1$, take an ordered sample of size $k$, with replacement,
from $[N]$, that is, 
$e_1,\ldots,e_k$ 
with all $N^k$ possible outcomes equally likely.
Let $\sigma$ be $\pi$ relativized to $e_1,\ldots,e_k$, as defined at
the start of Section \ref{sect tangle}.

Now suppose that, for each fixed $k \ge 1$,
\begin{equation} \label{each k permutation}  
   \forall \tau \in \cS_k, \ \text{ as } N \to \infty, \ \ 
    \p(\sigma = \tau)   \to  1/k! .
\end{equation}

Then, as $N \to \infty$,
\begin{equation}\label{thm 2 conclusion}
    (M_1(N)/N,M_2(N)/N,\ldots) \to^d \bX =(X_1,X_2,\ldots), 
\end{equation} 
where $\bX$ has the Poisson-Dirichlet distribution, as in~\eqref{def
  PD} and~\eqref{to dist size ordered}.
\end{lemma}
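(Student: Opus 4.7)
The plan is to reduce Lemma~\ref{permutation lemma} to the already-established Lemma~\ref{partition lemma}. Associate to the random permutation $\pi$ its cycle partition of $[N]$; this makes $M_j(N)$ the $j$-th largest cycle size in both lemmas simultaneously, and the $k$-sample $e_1,\ldots,e_k$ with replacement is common to both set-ups. It then suffices to produce a process $\bA(k)$ for which the two hypotheses \eqref{limit hypothesis} and \eqref{k hypothesis} of Lemma~\ref{partition lemma} hold, as the resulting conclusion $\bbM(N) \to^d \bX$ with PD $\bX$ is exactly \eqref{thm 2 conclusion}.

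I take $\bA(k) = (A_1(k),A_2(k),\ldots)$ to be the age-ordered cycle-size process of a \emph{flat} random permutation $\tau^* \in \cS_k$, as in~\eqref{def A}. Then hypothesis~\eqref{limit hypothesis} is the known fact~\eqref{to dist age ordered} together with~\eqref{def PD} and the continuity of $\rank$ on $\Delta$ cited in Section~\ref{sect background}: $\bbA(k) \to^d$ GEM as $k \to \infty$, and ranking GEM yields the Poisson-Dirichlet process.

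To verify~\eqref{k hypothesis}, let $E_N$ be the event that $e_1,\ldots,e_k$ are all distinct. Since $k$ is fixed and $N \to \infty$, $\p(E_N) = \prod_{i=0}^{k-1}(1-i/N) \to 1$, so it is enough to identify the conditional distribution of $\bC(N,k)$ on $E_N$. By the definition of relativization in Section~\ref{sect tangle}, on $E_N$ the blocks of $\pi$ picked out by the sample in first-appearance order correspond, after the relabeling $e_i \mapsto i$, precisely to the cycles of $\sigma \in \cS_k$ written in canonical cycle notation, and $C_j$ is the length of the $j$-th such cycle. Thus on $E_N$, $\bC(N,k)$ is a deterministic function of $\sigma$ equal to the age-ordered cycle-size vector of $\sigma$. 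By hypothesis~\eqref{each k permutation} the permutation $\sigma$ converges in total-variation distance to the uniform law on $\cS_k$ (all finitely many point probabilities converge), whose age-ordered cycle-size vector is, by definition, $\bA(k)$. Combined with $\p(E_N) \to 1$ this yields $\bC(N,k) \to^d \bA(k)$, and Lemma~\ref{partition lemma} then completes the proof.

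The only step requiring real care is the bookkeeping identifying first-appearance blocks of the sample with canonical-order cycles of $\sigma$ on $E_N$: the ``smallest index not yet seen'' in the canonical cycle notation of $\sigma$ corresponds, under the relabeling $e_i \mapsto i$, to the first $e_a$ lying in a block not yet visited by $e_1,\ldots,e_{a-1}$. No deeper obstacle is anticipated; the proof is essentially an exercise in unpacking the definitions of Section~\ref{sect tangle} and then invoking the partition lemma, with all the probabilistic substance already packaged inside Lemma~\ref{partition lemma}.
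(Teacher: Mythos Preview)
Your proposal is correct and follows essentially the same approach as the paper: both reduce to Lemma~\ref{partition lemma} by taking $\bA(k)$ to be the age-ordered cycle-length process of a uniform permutation in $\cS_k$, verify~\eqref{limit hypothesis} via the known GEM/PD limit, and verify~\eqref{k hypothesis} by observing that on the high-probability distinctness event the counts $\bC(N,k)$ coincide with the age-ordered cycle lengths of $\sigma$, whose law tends to uniform on $\cS_k$ by~\eqref{each k permutation}. Your write-up is in fact more explicit than the paper's about the bookkeeping matching first-appearance blocks to canonical-order cycles of $\sigma$.
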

\begin{proof}

Take  the processes $\bA(k)$ of cycle lengths, in age order, as given by 
\eqref{def A}, for uniform random permutations in $\cS_k$, to  serve 
as the
random elements in the hypotheses~\eqref{limit hypothesis} and
\eqref{k hypothesis} of  
Lemma \ref{partition lemma}.  This requires using the
Poisson-Dirichlet distribution, for $\bX$ in~\eqref{limit hypothesis}.

Fix $k$.
Then~\eqref{each k permutation} holding for each $\tau \in \cS_k$ implies that
the distribution of $\sigma$ is close, in total variation distance, to the uniform
distribution on $\cS_k$.   On the event, of probability
$\frac{N-1}{N} \cdots \frac{N-(k-1)}{N} \to 1$,  that the $k$-sample with
replacement from the
$N$ population has $k$ distinct elements, the counts $C(N,k)$ from
Lemma \ref{partition lemma} agree exactly with the cycle lengths in $\sigma$.
Hence hypothesis~\eqref{each k permutation} implies the hypothesis
~\eqref{k hypothesis}.

\end{proof}

\section{Putting it All Together: The Proof of Theorem~\ref{theorem 1}}
\label{sect together}

We now have established all the ingredients needed  for our proof 
of  Theorem~\ref{theorem 1}.   First, the conclusion~\eqref{thm 1
  conclusion} 
of Theorem~\ref{theorem 1} is
exactly the conclusion~\eqref{thm 2 conclusion} from Lemma
\ref{permutation lemma}.\footnote{There is a small shift of notation; 
in Section~\ref{sect sampling} we had to
  deal with \emph{both} FSR permutations and flat random permutations. 
So in  Section~\ref{sect sampling}, 
instead of  $\bL$ for the process of largest FSR cycles lengths,
 $\bM$ names the process of
  largest cycle lengths for an FSR permutation, and $\bL$ names the
  corresponding process for flat random permutations.}  To prove
Theorem~\ref{theorem 1}, it only remains to establish that the random
FSR model~\eqref{def pi f} satisfies the 
hypothesis~\eqref{each k permutation} of 
Lemma
\ref{permutation lemma}.
 
Fix $k$ for use in~\eqref{each k permutation}.
The uniform choice of $(f,\be) \in S_{n,k}$ determines
$\pi_f$ and the random sample $e_1,\ldots,e_k$  --- for convenience in
Lemma \ref{permutation lemma} we labeled the set $\mathbb{F}_2^n$ with
the
integers 1,2,$\ldots,N$.  
Let an arbitrary $\varepsilon >0$ be given.
Fix $m = m(k,\varepsilon)$ as per Lemma \ref{schedule lemma}, so that
with high probability, a random schedule of length $m$ over the
alphabet
of size ${k \choose 2}$ is $\varepsilon$-good.

We will take $t=N^{.6}$, recalling that $N=2^n$.
By Theorem~\ref{kt theorem}, 
for sufficiently large $n$, 
 on a good event $\Gkt$ of probability at least  $1-\varepsilon$,
the two-dimensional 
process $\bXv$
of indicators of vertex 
 repeats, in
$\seg(f,e_1,k(t+n))$,
agrees with the two-dimensional process $\bX$
of indicators of leftmost $(n-1)$-tuple repeats for coin tossing; 
and cutting, to produce $\be$ and $k$ segments, causes no unwanted
side effects.    
Then, by the Chen-Stein  method as given by Theorem 3 of
\cite{AGG89}  (with a survey of applications to sequence repeats given by Section 5 of
\cite{AGG90}, and details for the sequence repeats problem
given in (39)--(40) of \cite{AMRW}), for sufficiently large $n$  the 
total
variation distance between $\bX$ and $\bXp$ is at most $\varepsilon$,
where $\bXp$ has the same marginals as $\bX$, but all coordinates
mutually
independent.
Combined, the total variation distance between $\bXv$ and $\bXp$ is
arbitrarily small, at most $2 \varepsilon$.

The indicator of the happy event $H$ is a functional of the process 
$\bXv$,  so we can approximate $\p(H)$, with an additive error of at
most $2 \varepsilon$, by evaluating the same functional, applied to $\bXp$.
The required estimates for this independent process are routine, via
computations of the expected number of arrivals in various regions as in 
Section \ref{sect toggling},\footnote{These arguments take two forms:  1) if the expected
  number of arrivals is small, specifically, less than $\delta$, 
then the probability of (no arrivals)
  is large, specifically, greater than $1-\delta$, and 2) if the
  expected number of arrivals is sufficiently large, specifically,
  some $\lambda >1$, \emph{and the indicators of arrivals are
  mutually independent}, then the probability of (no arrivals) is
  small,
specifically, at most $e^{-\lambda}$.  It is precisely the role of the
Chen-Stein method to provide the required independence.}
and we have already provided most of the details,  in discussing
\eqref{need 1} and~\eqref{need 3}.  Additionally, one must check 
that  the schedule resulting from
use of~\eqref{def choice} is close, in total variation distance, to
the flat random choice  in the hypothesis of 
Lemma \ref{schedule lemma}; we omit the relatively easy details. 

To summarize, 
we picked $k$ for use in  Lemma \ref{permutation lemma}, 
then fixed an arbitrary $\varepsilon>0$,
then picked $m$ via  
Lemma \ref{schedule lemma}.\footnote{\label{summary footnote} In a sense,  Lemma
  \ref{permutation lemma}
encapsulates a relation between an arbitrary $\varepsilon>0$, and $k$,
hiding the full  
program:  given $\varepsilon>0$ to govern \emph{being close with high
  probability},  pick a single $k$ large
enough that the $k$-sampled-and-relativized
permutation being  close to uniform in $\cS_k$ would
imply 
that the large cycle process for FSR permutation is 
close to the PD, then pick a single $m$ to work for this $k$ and $\varepsilon$, 
then finally pick $n_0$, the notion of \emph{sufficiently large} $n$,
to work for this $k,m$ and $\varepsilon$.  The briefest 
summary is:
given
$\varepsilon$, pick $k$, then $m$, then $n_0$.
}
    For large $n$, the process of vertex
repeats
among the $k$ segments of length $t$ is controlled,
 via  comparison of $\bXv,\bX,\bXp$,  
showing that most 
$(f,\be)$ lie in $H$,  and furthermore, the event $H^* \equiv
H^*(\varepsilon)  \subset H$,
that the chosen potential toggle vertices $\vs_1,\ldots,\vs_m$
pick out a $\varepsilon$-good schedule, has $\p(H^*)>1-4
\varepsilon$. (Attributing $2 \varepsilon$ to $\dtv(\bXv,\bXp)$, $
 \varepsilon$ to $\p(H^c)$, and $
 \varepsilon$ to $\p(H \setminus H^*)$    .)
 Section~\ref{sect schedule}  shows that, on $H^*$, the
 settings of $f$ at its toggle vertices 
induce a nearly flat random matching between segment starts and ends,
and~\eqref{matching claim} in Section~\ref{sect tangle} lifts this to 
show that $\pi_f$
relativized to $e_1,\ldots,e_k$ is a nearly flat random
permutation in $\cS_k$.  Thus the combination of 
Section~\ref{sect schedule} and~\ref{sect tangle} shows that, on $H^*$, 
 on each equivalence class 
$[(f,\be)] \in H^*$, the total variation distance to the uniform
distribution
on $\cS_k$ is at most $\varepsilon$.  Hence, averaging over the
classes
in $H^*$, and allowing distance 1 for the at most $4 \varepsilon$ of
probability mass outside of $H^*$, we get that
for our fixed $k$, for arbitrary $\varepsilon$, for all sufficiently
large $n$,  $\dtv(\sigma, $ uniform$(\cS_k)) = \frac{1}{2}
\sum_{\tau \in \cS_k} | \p(\sigma = \tau) - \frac{1}{k!}| < 5 \varepsilon$,
which establishes~\eqref{each k permutation}.
This completes the proof.

\section{Acknowledgments}
We would like to acknowledge numerous helpful
discussions with Danny Goldstein, Max Hankins and Jay-C
Reyes.

\clearpage

\nocite{ABS}
\nocite{punctured}
\nocite{ThreeCouplings}
\nocite{Diaconis}
\nocite{Maurer}
\bibliography{article}

\begin{thebibliography}{10}

\bibitem{aldous}
David Aldous.
\newblock {\em Probability Approxiations via the {P}oisson clumping heuristic},
  volume~77 of {\em Applied Mathematical Sciences}.
\newblock Springer-Verlag, New York, 1989.

\bibitem{budalect}
Richard~A. Arratia.
\newblock On the amount of dependence in the prime factorization of a uniform
  random integer.
\newblock In {\em Contemporary Combinatorics}, volume~10 of {\em Bolyai Soc.
  Math. Stud.}, pages 29--91. J\'anos Bolyai Math. Soc., Budapest, 2002.

\bibitem{ABTbook}
Richard~A. Arratia, A.D. Barbour, and S.~Tavar\'{e}.
\newblock {\em Logarithmic Combinatorial Structures: a Probabilistic Approach}.
\newblock European Mathematical Society (EMS) Zurich, 2003.

\bibitem{ThreeCouplings}
Richard~A. Arratia, A.D. Barbour, and S.~Tavar\'{e}.
\newblock A tale of three couplings: {Poisson-Dirichlet} and {GEM}
  approximations for random permutations.
\newblock {\em Combin. Probab.. Comput.}, 15(1--2):31--62, 2006.

\bibitem{ABS}
Richard~A. Arratia, B\'{e}la Bollob\'{a}s, and Gregory~B. Sorkin.
\newblock The interlace polynomial of a graph.
\newblock {\em J. Combin. Th. Ser. B}, 92(2):199--233, 2004.

\bibitem{AGG89}
Richard~A. Arratia, Larry~M. Goldstein, and Louis~I. Gordon.
\newblock Two moments suffice for {P}oisson approximation: the {Chen-Stein}
  method.
\newblock {\em Ann. Probab.}, 17(1):9--25, 1989.

\bibitem{AGG90}
Richard~A. Arratia, Larry~M. Goldstein, and Louis~I. Gordon.
\newblock {P}oisson approximation and the {Chen-Stein} method.
\newblock {\em Statistical Science}, 5(4):403--434, 1990.

\bibitem{AMRW}
Richard~A. Arratia, Daniela Martin, Gesine Reinert, and Michael~S. Waterman.
\newblock Poisson process approximation for sequence repeats, and sequencing by
  hybridization.
\newblock {\em J. Computational Biology}, 3(3):425--463, 2009.

\bibitem{billingsley72}
P.~Billingsley.
\newblock On the distribution of large prime divisors.
\newblock {\em Periodica Mathematica Hungarica}, 2:283--289, 1972.

\bibitem{punctured}
Don Coppersmith, Robert Rhoades, and Jeff VanderKam.
\newblock Counting de {B}ruijn sequences as perturbations of linear recursions.
\newblock https://arxiv.org/abs/1705.07835, to appear.

\bibitem{DiaconisGroup}
Persi Diaconis.
\newblock {\em Group Representations in Probability and Statistics}.
\newblock Institute of Mathematical Studies, Hayward, CA, 1988.

\bibitem{Diaconis}
Persi Diaconis and Mehrdad Shahshahani.
\newblock Generating a random permutation with random transpositions.
\newblock {\em Z. Wahrsch. Verw. Gebiete}, 57(2):159--179, 1981.

\bibitem{Donnelly}
Peter Donnelly and Paul Joyce.
\newblock Continuity and weak convergence of ranked and size-biased
  permutations on the infinite simplex.
\newblock {\em Stochastic Process. Appl.}, 31(1):89--103, 1989.

\bibitem{Dudley}
R.~M. Dudley.
\newblock {\em Real Analysis and Probability}, volume~74 of {\em Cambridge
  Studies in Advanced Mathematics}.
\newblock Cambridge University Press, Cambridge, 2002.
\newblock Revised reprint of the 1989 original.

\bibitem{engen}
Steinar Engen.
\newblock A note on the geometric series as a species frequency model.
\newblock {\em Biometrika}, 62(3):697--699, 1975.

\bibitem{Golomb}
Solomon~W. Golomb.
\newblock {\em Shift Register Sequences}.
\newblock Holden Day, Inc., San Francisco, CA-Cambridge-Amsterdam, 1967.
\newblock With portions co-authored by Lloyd R. Welch, Richard M. Goldstein,
  and Alfred W. Hales.

\bibitem{GolombWelch}
Solomon~W. Golomb, Lloyd~R. Welch, and Richard~M. Goldstein.
\newblock Cycles from nonlinear feedback shift registers.
\newblock Progress Report No. 20-389, 1959.
\newblock Jet Propulsion Laboratory, California Institute for Technology,
  Pasadena.

\bibitem{griffiths}
R.~C. Griffiths.
\newblock On the distribution of allele frequencies in a diffusion model.
\newblock {\em Theoret. Population Biol.}, 15(1):140--158, 1979.

\bibitem{Maurer}
Ueli~M. Maurer.
\newblock Asymptotically tight bounds on the number of cycles in generalized de
  {B}ruijn-{G}ood graphs.
\newblock {\em Discr. Appl. Math.}, 37, 1992.

\bibitem{mccloskey}
John~W. McCloskey.
\newblock {\em A model for the distribution of individuals by species in an
  environment}.
\newblock PhD thesis, Michigan State University, 1965.

\bibitem{wolfram}
Stephen Wolfram.
\newblock Solomon {G}olomb (1932--2016).
\newblock Available at
  \url{https://writings.stephenwolfram.com/2016/05/solomon-golomb-19322016/}.

\end{thebibliography}
\bibliographystyle{plain}

\end{document}